\newcommand{\nnnew}[1]{{\color{green} #1}}
\renewcommand{\nnnew}[1]{#1}
\newcommand{\isdraft}{\boolean{true}} 
\renewcommand{\isdraft}{\boolean{false}} 
\ifthenelse{\isdraft}{
    \usepackage[color]{showkeys}
    \usepackage{xcolor}
}{}
\newcommand{\markupdraft}[2]{
    \ifthenelse{\equal{#1}{display}}{#2}{}
    \ifthenelse{\equal{#1}{color}}{\color{#2}}{}
}
\newcommand{\nnotecolored}[3][]{\markupdraft{display}{{\color{#2}\noindent[{\bf Note #1}: #3]}}}
\newcommand{\mathnote}[1]{\markupdraft{display}{{\color{brown}\noindent[{\bf Math note}: #1]}}}
\newcommand{\generalnote}[1]{\markupdraft{display}{{\color{brown}\noindent[{\bf General note}: #1]}}}
\newcommand{\newcolored}[3][]{{\markupdraft{color}{#2}#3}
    \ifthenelse{\equal{#1}{}}{}{\markupdraft{display}{{\color{yellow!70!black}[#1]}}}} 
\providecommand{\del}[2][]{{\markupdraft{display}{{\color{red!20!yellow}[rmed: "#2"[#1]]}}}} 
\providecommand{\new}[2][]{\newcolored[#1]{blue}{#2}}
\providecommand{\nnew}[2][]{\newcolored[#1]{red}{#2}}
\providecommand{\rem}[2][]{\nnotecolored[#1]{green}{#2}} 
\providecommand{\nnote}[2][]{\nnotecolored[#1]{magenta}{#2}}   
\providecommand{\hide}[2][]{\nnotecolored[#1]{magenta}{#2}}
\providecommand{\smalltodo}[2][]{\markupdraft{display}{{\color{cyan}~\noindent== small todo: #2 {\color{yellow}(#1)} ==}}}
\providecommand{\todo}[2][]{\markupdraft{display}{{\color{red}\noindent++TODO: #2 {\color{yellow}(#1)}++}}}
\ifthenelse{\isdraft}{}{\renewcommand{\markupdraft}[2]{}}
\newcommand{\niko}[1]{\rem[Niko]{\color{brown}#1}}
\newcommand{\anne}[1]{\rem[Anne]{\color{orange}#1}}
\newtheorem{remark}{Remark}{\bfseries}{\itshape}
\renewcommand{\t}{t}
\renewcommand{\k}{t}
\newcommand{\F}{\mathcal{F}}
\newcommand{\FF}{\mathcal{F}}
\newcommand{\GG}{\mathcal{G}}
\newcommand{\G}{G}
\newcommand{\Y}{\mathbf{Y}}
\newcommand{\ZZ}{\mathcal{Z}}
\newcommand{\Monotone}{\ensuremath{\mathcal{M}}}
\newcommand{\mueff}{\mu_{\rm w}}
\newcommand{\Nplus}{\NNN_{>}}
\newcommand{\p}{\mathbf{p}}
\def\Tr{{\rm Tr}}
\def\CR{{\rm CR}}
\def\Var{{\rm Var}}
\newcommand{\SSel}{\mathcal{O}rd}
\newcommand{\OOrd}{\mathcal{O}rd}
\newcommand{\xzero}{\mathbf{x}_{0}}
\newcommand{\eq}{Eq.}
\newcommand{\onefifth}{\ensuremath{1\!/\!5}}
\newcommand{\xNES}{\ensuremath{\mathrm{xNES}}}
\newcommand{\dsa}{\ensuremath{\mathrm{CSAw/o}}}
\newcommand{\dsas}{\ensuremath{{\mathrm{CSAw/o}^{2}}}}
\newcommand{\sa}{\ensuremath{{\mathrm{SA}}}}
\newcommand{\commaES}{\ensuremath{(\mu \slash \mu_w,\lambda)}\xspace}
\newcommand{\plusES}{\ensuremath{(1+1)}\xspace}
\renewcommand{\dim}{n}
\newcommand{\Id}{\mathbf{I}_n}
\newcommand{\B}{\mathcal{B}}
\newcommand{\R}{\mathbb{R}}
\newcommand{\N}{\ensuremath{\mathcal{N}}}
\newcommand{\NNN}{\ensuremath{{\mathbb{{N}}}}}
\newcommand{\X}{\ensuremath{\mathbf{X}}}
\newcommand{\Z}{\ensuremath{\mathbf{Z}}}
\newcommand{\s}{\ensuremath{\mathbf{s}}}
\newcommand{\x}{\ensuremath{\mathbf{x}}}
\newcommand{\uu}{\ensuremath{\mathbf{u}}}
\newcommand{\y}{\ensuremath{\mathbf{y}}}
\newcommand{\z}{\ensuremath{\mathbf{z}}}
\newcommand{\UUU}{\ensuremath{\mathbb{U}}}
\newcommand{\Uspace}{\ensuremath{\mathbb{U}^{p}}}
\newcommand{\U}{\ensuremath{\mathbf{U}}}
\newcommand{\A}{\ensuremath{\mathbf{A}}}
\newcommand{\Ut}{\ensuremath{\mathbf{U}_{\k}}}
\newcommand{\Utt}{\ensuremath{\mathbf{U}_{\k+1}}}
\newcommand{\Zt}{\ensuremath{\mathbf{Z}_{\k}}}
\newcommand{\Ztt}{\ensuremath{\mathbf{Z}_{\k+1}}}
\newcommand{\Xt}{\ensuremath{\mathbf{X}_\k}}
\newcommand{\Xtt}{\ensuremath{\mathbf{X}_{\k+1}}}
\newcommand{\Ytt}{\ensuremath{\mathbf{Y}_{\k+1}}}
\newcommand{\st}{\ensuremath{\sigma_\k}}
\newcommand{\stt}{\ensuremath{\sigma_{\k+1}}}
\newcommand{\etastar}{\ensuremath{\eta^{\star}}}
\newcommand{\factonefifth}{\gamma}
\newcommand{\xstar}{\ensuremath{\mathbf{x}^{\star}}}
\newcommand{\Rplus}{\R^{+}}
\newcommand{\Rplusstar}{\R^{+}_{>}}
\newcommand{\LRm}{\kappa_{m}}
\newcommand{\LRsigma}{\kappa_{\sigma}}
\newcommand{\ptarget}{p_{\rm target}}
\newcommand{\Normal}{\mathcal{N}(0,\Id)}
\newcommand{\logn}{{\rm Logn}}
\newcommand{\cp}{comparison-based}
\newcommand{\rs}{randomized search}
\newcommand{\cprs}{\cp\ step-size adaptive \rs\ } 
\newcommand{\cprss}{\cp\ step-size adaptive \rs}
\newcommand{\acprs}{CB-SARS}
\newcommand{\asars}{SARS}
\newcommand{\scaleSI}{\rho}
\newcommand{\Sol}{\mathcal{S}ol}
\newcommand{\Perm}{\varsigma}
\newcommand{\StateSpace}{\Omega}
\newcommand{\Isom}{\rm Homo}
\newcommand{\SymGroup}{\mathcal{S}}
\title{{\large Linear Convergence of Comparison-based Step-size Adaptive Randomized Search via Stability of Markov Chains\\}
\todo{
}}
\author{Anne Auger\thanks{Inria, LRI, B\^at 660, University Paris Sud, 91405, Orsay, France ({\tt first.lastname\_at\_inria.fr}).} \and Nikolaus Hansen$^*$}
\begin{document}
\maketitle

%


\generalnote{From a discussion of the 16th of July 2013: about the fact that we cannot prove the sign of the convergence rate in the comma case: (Niko) in practice for reasonable algorithms convergence occurs, it does occur at a rate related to the highest curvature of the function, that will determine the target step-size that allow progress. Hence the higher curvature, the smaller convergence rate. Still we expect that on some functions the observed convergence rate depends on which ``curvature-valey'' we follow (see general plot of scaling-invariant function). In terms of stationary distribution of the normalized Markov chain, it means that it has several modes associated to the different regions where the curvature changes. Geometric ergodicity does not tell us anything about the mixing time of the chain, or how long it takes to go from one mode to the next one.\\
The claim of Niko concerning how the convergence rate depends on the curvature could be backup by some experiments.
On the fact that the self-adaptive ES with recombination can diverge: artefact of the recombination or bias because of individual step-size but does not mean that reasonable comma-ES will diverge on some scaling-invariant function.

Asymptotic Behavior of a Markovian Stochastic Algorithm with Constant Step
Jean-Claude Fort and Gilles Pagès
SIAM Journal on Control and Optimization 1999, Vol. 37, No. 5, pp. 1456-1482

}

\begin{abstract}
In this paper, we consider  \emph{comparison-based} adaptive stochastic algorithms for solving numerical optimisation problems. We consider a specific subclass of algorithms \new{that} we call \cprs (CB-SARS), where the state variables at a given iteration are a vector of the search space and a positive parameter, the step-size, typically controlling the overall standard deviation of the underlying search distribution.

We investigate the \emph{linear} convergence of CB-SARS on
\emph{scaling-invariant} objective functions. Scaling-invariant
functions preserve the ordering of points with respect to their function
value when the points are scaled with the same positive parameter (the
scaling is done w.r.t.\ a fixed reference point). This class of
functions includes norms composed with strictly increasing functions as
well as many \emph{non quasi-convex} and \emph{non-continuous}
functions. On scaling-invariant functions, we show the existence of a
homogeneous Markov chain, as a consequence of natural invariance
properties of CB-SARS (essentially scale-invariance and invariance to
strictly increasing transformation of the objective function). We then
derive sufficient conditions for \emph{global linear convergence} of
CB-SARS, expressed in terms of different stability conditions of the
normalised homogeneous Markov chain (irreducibility, positivity, Harris
recurrence, geometric ergodicity) and thus define a general methodology
for proving global linear convergence of CB-SARS algorithms on
scaling-invariant functions. As a by-product we provide a
connexion between  \emph{comparison-based} adaptive stochastic
algorithms and Markov chain Monte Carlo algorithms.

\generalnote{When function evaluations are noiseless, DFO methods can ac
hieve the same rates of convergence
as noiseless gradient methods up to a small factor depending
on a low-order polynomial of the di-
mension [9, 5, 10]. This leads one to wonder if the same equiva
lence can be extended to the case
when function evaluations and gradients are noisy - ref: http://arxiv.org/pdf/1209.2434v1.pdf}
\end{abstract}

\begin{keywords} 
stochastic algorithms, numerical optimisation, Markov chains, Markov chain Monte Carlo, comparison-based, linear convergence, invariance, adaptive randomized search, adaptive algorithms, derivative-free optimization
\end{keywords}


\pagestyle{myheadings}
\thispagestyle{plain}
\markboth{A. AUGER AND N. HANSEN}{LINEAR CONVERGENCE OF CB-SARS VIA STABILITY OF MARKOV CHAINS}


\nnote{Start of big revisions for SIAM final version: 3588, August 2015. Correction of definition of scaling invariant functions still inside, as well as new proof for construction of normalized Markov chain.}

\section{Introduction}

We consider the problem of minimizing an objective function $f: \R^{n} \to \R$ where the search cost is defined as the number of calls to the function $f$. We investigate \emph{comparison-based} search algorithms that use the $f$-values only through \emph{comparisons}. Because the $f$-values are totally ordered, from pair-wise comparisons a ranking of $f$-values can be derived and we can equivalently refer to our scenario as \emph{comparison-} or \emph{ranking-based}. In allusion to the term \emph{derivative-free optimization}, we might speak of \emph{function-value-free optimization} in this case. 
Well-known derivative-free methods are comparison-based algorithms, for instance pattern searches methods \cite{Hooke:Jeeves29,torczon1997convergence,audet2002analysis} and the simplex method by Nelder and Mead \cite{NelderMead:65,mckinnon1998convergence} and we believe that their success is to some extent due to their comparison-based property.

From the fact that the methods only use the comparison information
follows invariance of the algorithms to composing the objective function
(to the left) by a strictly increasing function $g: \R \to \R$. This
invariance property provides robustness because an error on the
objective function value--that can stem from various sources of
noise--has an impact only if it changes the result of a comparison,
i.e., if it changes the $f$-\emph{ordering} of the candidate solutions
under consideration. This invariance provides robustness also in that
very small or very large $f$-values can only have a limited impact. The
invariance also facilitates predictability, because the sequence of
solutions generated on $f$ and on $g\circ f$ are indistinguishable. Naturally, comparison-based algorithms have a wider range of applicability than derivative-free algorithms as they can be used in the absence of a numerical objective function value, for instance in the case where a user would provide relative preferences to the algorithm \cite{lewis1996rank}. At
the same time, invariance to strictly increasing transformations arguably makes convergence proofs harder to tackle, as
one has a weaker control on the objective function decrease.

In this context, this paper investigates the \emph{linear convergence} of a class of \emph{adaptive stochastic} comparison-based algorithms, namely comparison-based (CB) \emph{step-size adaptive} randomised search (SARS), abbreviated as \acprs. 
Formally, a SARS is a stochastically recursive sequence on the state space $\StateSpace =  \R^{n} \times \Rplusstar$. Given $(\X_{0},\sigma_{0}) \in \R^{n} \times \Rplusstar$,  the sequence is iteratively defined as
\begin{equation}\label{eq:FF}
(\Xtt,\stt) = \FF((\Xt,\st),\Utt)
\end{equation}
where $\Xt \in \R^{n}$ represents the favorite or incumbent solution at
iteration $\t$, $\st \in \Rplus_{>}$ is the so-called step-size, $\FF$
is a measurable function and $(\Ut)_{t \in \Nplus}$ is an independent
identically distributed (i.i.d.) sequence of random vectors. Often, the
step-size $\st$ represents the overall standard-deviation of an
underlying sampling distribution. Its proper control is crucial to
obtain linear convergence (a constant step-size gives a sub linear
convergence rate). The objective function $f$ must be available to the
\emph{transition function} $\FF$. While for \asars, the transition
function can use the $f$-\emph{values} of candidate solutions, the
transition function of \acprs\ uses only $f$-comparisons. A formal
definition will be given in Definition~\ref{def:SSAES}. In practice, in
addition to the adaptation of the scaling via the step-size, the
geometric shape of the underlying sampling distribution should be
adapted so as to properly solve ill-conditioned problems. If the
 sampling distribution is a multivariate normal
distribution, this can be done by adapting the covariance matrix as in
CMA-ES \cite{hansen2001}, the state-of-the-art randomized method for
continuous optimization. The methods investigated in this paper cover
thus some simplified version of CMA-ES.

Invariance to strictly increasing transformations of $f$ implies affine
\emph{covariance} (i.e.\ to applying an affine transformation to the left
of $f$) \cite{deuflhard2011newton}. We investigate here methods that are
in addition \emph{scale-invariant}, a particular case of
affine-invariance in the search space or affine \emph{contravariance}
(i.e.\ to applying an affine transformation to the right of $f$)
\cite{deuflhard2011newton}. Scale-invariance corresponds to affine
invariance where the general linear transformation is restricted to an
homothety. It translates that the algorithm has no intrinsic notion of
scale.

Affine invariance is a key aspect of several famous optimization algorithms like Newton or Nelder Mead methods which is also exploited in some of their theoretical analysis \cite{deuflhard2011newton,lagarias1998convergence,lagarias2012convergence}. Similarly, scale-invariance is an essential feature of the algorithms investigated here that we exploit heavily in our analysis.

The definition via \eqref{eq:FF} is general and abstract, however, often, \asars\ and \acprs\ take a specific form where the connexion with gradient methods becomes clear while the methods are \emph{derivative} and even \emph{function-value} free. Indeed, the update of the incumbent solution generally writes
\begin{equation}\label{eq:update-1}
\Xtt= \Xt + \kappa \st \Ytt
\end{equation}
where $\Ytt$ is a combination of selected random directions that can be seen as an  approximation of a gradient direction and $\kappa$ is a learning rate. This connexion can be pushed further for some specific algorithms where $\theta_t=(\Xt,\st)$ encodes the mean vector and standard deviation of a Gaussian distribution and a joint optimization criterion  formulated on the manifold defined by the family of Gaussian distributions $P_\theta$ can be defined. Applying a gradient update step with respect to $\theta$ to this joint criterion and taking a Monte Carlo approximation of the gradient\footnote{The gradient is taken wrt the Fisher Information metric, it is also called natural gradient.} defines a \cprs whose update equations are given in \eqref{eq:commamean} and \eqref{eq:nnes2} \cite{akimoto2010bidirectional,ollivier2013information}. Note that the learning rate $\kappa$ (and $\LRm, \LRsigma$ in \eqref{eq:commamean} and \eqref{eq:nnes2}) corresponds to the step-size of the underlying \emph{stochastic approximation algorithm} (here we however reserve the step-size name for $\st$ unless explicitly specified).


Several random optimization methods akin to the update in
\eqref{eq:update-1} were recently studied. First Nesterov proved
complexity bounds for some gradient-free algorithms using oracles for
directional derivatives (that use Gaussian random directions)
\cite{Nesterov:2011}. Later on, Stich et al.\ analyzed the simple Random
Pursuit (RP) where $\Ytt$ is a random direction and $\st$ is the result
of a line-search in the $\Ytt$ direction \cite{Stich2015,stich:2013}.
Assuming exact or approximate line search, they prove the linear
convergence of RP for strongly convex functions. They experimentally
compared RP to an accelerated version of RP, to Nesterov's schemes and
to a classical CB-SARS \cite{Schumer:68, Rechenberg,Devroye:72}. The
accelerated RP and Nesterov's schemes need as input some constants
related to the function (i.e., they are not tested in a black-box
setting). Concluding their observations on the performance of the
CB-SARS, the authors emphasize ``\textit{that the performance of the
adaptive step-size ES scheme [the classical CB-SARS] is remarkable given
the fact that it does not need any function-specific parametrization. A
comparison to the RP shows that it needs four times fewer function
evaluations on functions $f_{2} - f_{4}$.}'' \cite{stich:2013}. The main
reason are the saved expenses due to the omitted line searchs.
The theoretical analysis in \cite{stich:2013} heavily relies on the
control of the $f$-decrease at each iteration with the assumption of
exact line search (or with a controlled error in the case of approximate
line search). We believe that the author's analysis however is difficult
to generalize to our context. We resort thus to a different approach
that can prove in particular the linear convergence of the \acprs\
algorithm experimented in the aforementioned paper (for which the
authors stress the remarkable performance) \cite{companion-oneplusone}.

The optimization of noisy functions with derivative-free optimization algorithms has been recently investigated in \cite{jamieson2012,Shamir13}.  Comparisons to methods having access to the gradient are discussed in particular in those latter references.

While the previously mentioned papers analyze the algorithms on strongly convex and convex functions, we consider here the class of \emph{scaling-invariant functions}, natural in the context of \emph{comparison-based} algorithms.
We call a function $f$ scaling-invariant with respect to $\xstar$ if for all $\scaleSI > 0$, $\x,\y \in \R^{n}$
 $$
\nnnew{   f(\xstar +\x ) \leq f(\xstar + \y) 
   \Leftrightarrow
   f( \xstar+ \scaleSI \x ) \leq f( \xstar+ \scaleSI \y ) \enspace .}
     $$
This class includes all norms and all functions that are the composition of norm functions by increasing transformations--having hence convex sublevel sets--but also non quasi-convex functions, i.e., functions with non-convex sublevel sets. Non-constant scaling-invariant functions admit neither strict local optima besides $\xstar$ nor plateaus.

We  prove that  if a \acprs\ is scale-invariant, then, on a
scaling-invariant function, the normalised process $(\Xt - \xstar)/\st$
is a homogeneous Markov chain. In addition, stability properties of
this Markov chain imply asymptotic linear convergence of the original
algorithm independently of the starting point. We then formulate
different \emph{sufficient conditions}--expressed as stability conditions
on $(\Xt - \xstar)/\st$--that induce \emph{global} linear convergence
almost surely and of the expected log-progress. We also formulate
conditions for proving that the empirical estimate of the convergence
rate converges geometrically to the theoretical one from which we can
deduce non-asymptotic bounds. We hence define a general methodology to
prove linear convergence of \acprs\ algorithms. Our methodology
generalizes  previous works, restricted to a specific \acprs\ on the
sphere function \cite{Bienvenue:2003,TCSAnne04}, to a broader class of
algorithms and a much broader class of functions. In a companion
manuscript, the methodology has been applied to another \cprs algorithm
\cite{companion-oneplusone}.

The rest of this paper is organized as follows. We define in Section~\ref{sec:algo} \acprs\ algorithms. In Section~\ref{sec:inv}, we formalize different invariance properties commonly associated to \acprs.
In Section~\ref{sec:ex} we present several examples of existing methods that follow our general definition of \acprs\ and study their invariance properties.  In Section~\ref{sec:scaling-inv} we define the class of scaling-invariant functions. In Section~\ref{sec:jointMC}, we prove that for certain translation and scale-invariant \acprs\ algorithms optimizing scaling-invariant functions, $(\Xt- \xstar)/\sigma_{t}$ is a homogeneous Markov chain. In Section~\ref{sec:whystability}, we give sufficient conditions to linear convergence expressed in terms of stability of the Markov chain exhibited in Section~\ref{sec:jointMC}. A discussion is \del{finally }provided in Section~\ref{sec:discussion}. \new{In an appendix we describe in more details several examples of \acprs\ and present numerical experiments on those \acprs\ compared with experiments on Nelder Mead and Random Pursuit.}

\paragraph{Notations and definitions}
The set of nonnegative real numbers is denoted $\Rplus$ and $\Rplus_{>}$ denotes elements of $\Rplus$ excluding $0$, $\NNN$ is the set of natural numbers including zero while $\Nplus$ excludes zero. The Euclidian norm of a vector $\x$ of $\R^{n}$ is denoted $\| \x \|$.
A Gaussian vector or multivariate normal distribution with mean vector $\mathbf{m}$ and covariance matrix $\mathbf{C}$ is denoted $\N(\mathbf{m},\mathbf{C})$. The identity matrix in $\R^{n \times n}$ is denoted $\Id$ such that a standard multivariate normal distribution, i.e.\ with mean vector zero and identity covariance matrix is denoted $\Normal$. The density of a standard multivariate normal distribution (in any dimension) is denoted $p_{\N}$. The set of strictly increasing functions $g$ from $\R$ to $\R$ or from a subset  $I \subset \R$ to $\R$ is denoted $\Monotone$.
The notation $\xstar$ will be used in particular to denote the global minimum of the functions whose convergence is investigated. Sometimes we assume without loss of generality that $\xstar=0$.

\todo{DO I NEED THAT: We shall assume that if solutions are sampled from a continuous distribution for instance multivariate standard distribution, the ranking of the solutions with respect to $f$ does not produce ties almost surely. Hence fitness with some flat regions are excluded.}

\section{Comparison Based Step-size Adaptive Randomized Search \new{(\acprs)}}

In this section, we present a formal definition of \acprs\ algorithms. We then define invariance properties generally associated to those algorithms and finish by giving several concrete examples of \acprs\ algorithms as well as analyzing their invariance properties.

\subsection{Algorithm Definitions}\label{sec:algo}
We consider a \asars\ as defined in \eqref{eq:FF} and consider that each vector $\Ut$ belongs to a space $\Uspace=\UUU \times \ldots \times \UUU$ and has $p$ coordinates $\Ut^{i}$ belonging to $\UUU$. 
The probability distribution of the vector $\Ut$ is denoted $p_{\U}$.  From the definition \eqref{eq:FF} follows that $\left( (\Xt,\st)\right)_{t \in \NNN}$ is a time homogeneous Markov chain. 
We call $\FF$ the \emph{transition function} of the algorithm. The objective function $f$ is also an input argument to the transition function $\FF$ as the update of $(\Xt,\st)$ depends on $f$, however we omit this dependence in general for the sake of simplicity in the notations. If there is an ambiguity we add the function $f$ as upper-script, i.e.\ $\FF^{f(\x)}$ or $\FF^{f}$.

A \acprs\ is a \asars\ where the transition function $\FF$ depends on
$f$ only through comparison of candidate solutions and is the
composition of several functions  that we specify in the sequel. The $p$
coordinates of $\Utt$ are in a first time used to create new candidate
solutions according to a $\Sol$ function:
\begin{equation*}
\Xtt^{i} = \Sol((\Xt,\st),\Utt^{i}) \,, i=1,\ldots,p \enspace.
\end{equation*}
(For instance in the case where $\UUU=\R^{n}$ the $\Sol$ function can equal $\Sol((\x,\sigma),\uu^{i}) = \x + \sigma \uu^{i}$.) The $p$ candidate solutions are then evaluated on $f$ and ordered according to their objective function value (We break possible ties by considering the first solution sampled when two solutions are equal). The permutation corresponding to the ordered objective function values $f(\Xtt^{i})$ is denoted $\Perm \in \mathfrak{S}(p)$ where we denote $\mathfrak{S}(p)$ the set of permutations of $p$ elements. Formally $\Perm$ is the output of the $\OOrd$ function defined below. It is then used to order the coordinates of the vector $\Utt$ accordingly. More formally the permutation acts on the coordinates of $\Utt$ via the following function:
\begin{equation}
\begin{aligned}
\mathfrak{S}(p) \times \Uspace  \to & \Uspace \\
(\Perm, \Utt) \mapsto & \Perm*\Utt = \left(\Utt^{\Perm(1)},\ldots,\Utt^{\Perm(p)} \right) 
\end{aligned}
\end{equation}
where the previous equation implicitly defines the operator $*$.

The update of $(\Xt,\st)$ is achieved using the current state $(\Xt,\st)$ and the ranked coordinates of $\Utt$. More precisely let us consider a measurable function $\GG$ called update function that maps $\Omega \times \Uspace$ onto $  \Omega $, the update of $(\Xt,\st)$ reads
\begin{equation}\label{eq:ss-G}
(\Xtt,\stt) = \GG((\Xt,\st), \Perm * \Utt) =  \GG((\Xt,\st),\Ytt) \enspace,
\end{equation}
where $\Ytt$ denotes the ordered coordinates of $\Utt$, i.e.\
\begin{equation}\label{eq:defY}
\Ytt=(\Utt^{\Perm(1)},\ldots,\Utt^{\Perm(p)}) \enspace.
\end{equation}
We formalize the definition of a \acprs\ below after introducing a definition for the function $\Sol$ for generating solutions as well as for the ordering function.
\begin{definition}[$\Sol$ function]\label{def:Sol} Given $\UUU$, the state space for the sampling coordinates of $\Ut$, a $\Sol$ function used to create candidate solutions is a measurable function mapping $\Omega \times \UUU$ into $\R^{n}$, i.e.\
$$
\Sol: \Omega \times \UUU \mapsto \R^{n} \enspace .
$$
\end{definition}
\noindent  We now define the ordering function that returns a permutation based on the objective function values.
\begin{definition}[$\OOrd$ function]\label{def:Ord} The ordering function $\OOrd$ maps $\R^{p}$ to $\mathfrak{S}(p)$, the set of permutations with $p$ elements and returns for any set of \new{indexed} real values $(f_{1},\ldots,f_{p})$ a permutation of ordered indexes. That is $\Perm = \OOrd(f_{1},\ldots,f_{p}) \in \mathfrak{S}(p)$ where
$$
f_{\Perm(1)} \leq \ldots \leq f_{\Perm(p)} \enspace.
$$
When more convenient we denote $\Perm$ as $\OOrd((f_{i})_{i=1,\ldots,p})$ instead of $\OOrd(f_{1},\ldots,f_{p}) $. When needed for the sake of clarity we use the notations $\OOrd^{f}$ or $\Perm^{f}$ to emphasize the dependency in $f$.
\end{definition}

We are now ready to give  a formal definition of a \cprss.
\begin{definition}[\acprs\ minimizing $f:\R^{\dim} \to \R$]\label{def:SSAES} Let $p \in \Nplus$ and $\Uspace=\UUU \times \ldots \times \UUU$ where $\UUU$ is a subset of $\R^{m}$. Let $p_{\U}$ be a probability distribution defined on $\Uspace$ where each $\U$ distributed according to $p_{\U}$ has a representation $(\U^{1},\ldots,\U^{p})$ (each $\U^{i} \in \UUU$). Let $\Sol$ be a solution function as in Definition~\ref{def:Sol}. Let $\GG_{1}:  \Omega \times \Uspace \mapsto  \R^{n} $ and $\GG_{2}: \Rplus \times \Uspace \mapsto  \Rplus$ be two mesurable mappings and let denote $\GG=(\GG_{1},\GG_{2})$.

A \acprs\ is determined by  the quadruplet $(\Sol,\GG,\Uspace,p_{\U})$ from which the recursive sequence $(\Xt,\st) \in \Omega$ is defined via $(\X_{0},\sigma_{0}) \in \Omega$ and for all $t$:
\begin{align}\label{eq:sol}
\Xtt^{i} & = \Sol((\Xt,\st),\Utt^{i}) \,, i=1,\ldots,p \\\label{eq:perm}
\Perm & = \SSel(f(\Xtt^{1}),\ldots,f(\Xtt^{p})) \in \mathfrak{S}(p)\\\label{eq:G1}
\Xtt & = \GG_{1}\left( (\Xt,\st), \Perm*\Utt \right)\\\label{eq:G2}
\stt & = \GG_{2}\left(\st,  \Perm*\Utt \right)  
\end{align}
where $(\Ut)_{t \in \Nplus}$ is an i.i.d.\ sequence of random vectors on \Uspace\ distributed according to $p_{\U}$, $\OOrd$ is the ordering function as in Definition~\ref{def:Ord}.
\end{definition}

The previous definition illustrates the \emph{function-value-free} property as we see that the update of the state $(\Xt,\st)$ is performed using solely the information given by the permutation that contains the order of the candidate solutions according to $f$.
For a \cprss, the function $\FF$ introduced in \eqref{eq:FF} is the composition of the update function $\GG$, the solution operator $\Sol$ and the ordering function, more precisely
\begin{equation}\label{eq:FvsG}
\boxed{
\FF^{f}((\x,\sigma),\uu) = \GG((\x,\sigma),\SSel(f(\Sol((\x,\sigma),\uu^{i}))_{i=1,\ldots,p}) * \uu ) \enspace.
}
\end{equation}
Note that for the update of the step-size (see \eqref{eq:G2}), we assume that $\Xt$ does not come into play.\niko{I wonder whether this is good (more specific) or bad (less general). I could imagine that this is a necessary consequence of scale-invariance.} Examples of \acprs\ are given in Section~\ref{sec:ex}. 



\subsection{Invariance Properties}\label{sec:inv}
Invariance is an important principle in science in general and in optimization. When an invariance property holds, convergence results that are true on a \emph{single} function generalize to a \emph{whole class} of functions.  Some invariances are taken for granted in optimization, like translation invariance, while others are less common or less recognized. In the sequel we start by formalizing that \acprs\ are invariant to strictly monotonic transformations of $f$. We focus then in Section~\ref{sec:ISS} on invariance in search space and  formalize translation invariance and scale invariance. We also derive sufficient conditions for a \acprs\ to be translation and scale-invariant. 

\subsubsection{Invariance to Strictly Monotonic Transformations of $f$}\label{sec:invmonot}
Invariance to strictly monotonic transformations of $f$ of a \acprs\ algorithm is a direct consequence of the algorithm definition.
It stems from the fact that the objective function only comes into play through the ranking of the solutions via the ordering function (see \eqref{eq:perm}, \eqref{eq:G1} and~\eqref{eq:G2}). This ordering function does output the same result on $f$ or any strictly monotonic transformation of $f$. More formally let us define $\Monotone_{I}$ the set of strictly increasing functions $g: I \to \R$, where $I$ is a subset of $\R$ i.e.\ if for all $x$ and $y$ in $I$ such that $x < y$ we have $g(x) < g(y)$ and define $\Monotone = \cup_{I} \Monotone_{I}$.
%
%
The elements of $\Monotone$ preserve the ordering. The invariance to composite of $f$ by a function in $\Monotone$ is stated in the following proposition.

\begin{proposition}\label{prop:inv-strict-monotone}[Invariance to strictly monotonic transformations]
Consider $(\Sol,\GG,\Uspace,p_{\U})$ a \acprs\ as defined in Definition~\ref{def:SSAES} optimizing $f: \R^{\dim} \to \R$ and let $(\Xt,\st)$ be the Markov chain sequence defined as $(\X_{0},\sigma_{0}) \in \R^{\dim} \times \Rplusstar$ and
$$
(\Xtt,\stt) = \GG((\Xt,\st),\Perm^{f}*\Utt)
$$
where $(\Ut)_{t \in \Nplus}$ is an i.i.d.\ sequence of random vectors on \Uspace\ distributed according to $p_{\U}$ and $\Perm^{f} = \OOrd(f(\Sol((\Xt,\st),\Utt^{i}))_{1 \leq i \leq p})$. Let $g: f(\R^{n}) \to \R$ (where $f(\R^{n})$ is the image of $f$) in $\Monotone$ be a strictly increasing function  and $(\Xt',\st')$ be the Markov chain obtained when optimizing $g \circ f$ using the same sequence $(\Ut)_{t \in \Nplus}$ and same initial state $({\X_{0}}',{\sigma_{0}}')=(\X_{0},\sigma_{0})$. Then \del{almost surely}for all $t$
$$
(\Xt,\st) = (\Xt',\st') \enspace.
$$
\end{proposition}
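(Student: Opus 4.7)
The plan is a straightforward induction on the iteration index $t$, with the key observation being that the ordering function $\OOrd$ depends on its input only through the relative order of the arguments, and strictly monotonic $g \in \Monotone$ preserves that order.

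For the base case $t = 0$, the equality $(\X_0, \sigma_0) = (\X_0', \sigma_0')$ holds by assumption. For the inductive step, assume $(\X_t, \sigma_t) = (\X_t', \sigma_t')$. Since the sampling noise $\U_t$ is the same in both runs and $\Sol$ does not involve $f$, the candidate solutions generated by \eqref{eq:sol} coincide: $\X_t^i = \Sol((\X_t,\sigma_t), \U_t^i) = \Sol((\X_t',\sigma_t'), \U_t^i) = {\X_t'}^i$ for $i = 1, \dots, p$. Consequently, the function values satisfy $g(f(\X_t^i)) = (g \circ f)({\X_t'}^i)$, and since $g$ is strictly increasing on $f(\R^n)$, for all $i,j$ we have $f(\X_t^i) \leq f(\X_t^j) \Leftrightarrow g(f(\X_t^i)) \leq g(f(\X_t^j))$.

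By the definition of $\OOrd$ (Definition~\ref{def:Ord}), this order-preservation yields $\Perm^f = \OOrd(f(\X_t^i))_{1 \leq i \leq p} = \OOrd((g \circ f)({\X_t'}^i))_{1 \leq i \leq p} = \Perm^{g \circ f}$. Plugging this into the update \eqref{eq:G1}--\eqref{eq:G2}, and using that $\GG$ does not depend on $f$, we get
\[
(\Xtt, \stt) = \GG((\Xt,\st), \Perm^f * \Ut) = \GG((\Xt',\st'), \Perm^{g\circ f} * \Ut) = (\Xtt', \stt'),
\]
which completes the induction. The ``almost surely'' qualifier is needed because in principle $\OOrd$ is only well-defined modulo ties; so the argument is made on the full-probability event where no ties occur among the $f(\X_t^i)$ (equivalently, among the $(g\circ f)({\X_t'}^i)$, which is the same event since $g$ is injective).

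I do not anticipate a real obstacle here: the statement is essentially a tautology extracted from the structural definition of a \acprs. The only subtlety worth flagging is the handling of ties in $\OOrd$, which either requires a tie-breaking rule built into $\OOrd$ (in which case the conclusion is pathwise, not just almost sure) or the standard continuity assumption on $p_\U$ that ensures ties have probability zero — this is the same caveat hinted at in the ``DO I NEED THAT'' remark earlier in the notations paragraph.
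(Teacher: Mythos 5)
Your proof is correct and follows essentially the same route as the paper's: an induction on $t$ whose only substantive step is that $\OOrd(f(\Xt^{1}),\ldots,f(\Xt^{p})) = \OOrd(g\circ f(\Xt^{1}),\ldots,g\circ f(\Xt^{p}))$ because $g$ preserves order. Your extra remark on tie-breaking in $\OOrd$ is a reasonable caveat that the paper leaves implicit, but it does not change the argument.
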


{\em Proof.}
The proof is immediate, by induction. Assume $(\Xt,\st) = (\Xt',\st')$ and let denote $\Xtt^{i} = \Sol((\Xt,\st),\Utt^{i})$. Because $\SSel(f(\Xtt^{1}),\ldots,f(\Xtt^{p})) = \SSel(g\circ f(\Xtt^{1}),\ldots,g\circ f(\Xtt^{p})) = \Perm$, then 
$$
(\Xtt,\stt)=\GG((\Xt,\st),\Perm* \Utt) = \GG((\Xt',\st'),\Perm* \Utt) = (\Xtt',\stt') \enspace. \eqno \endproof
$$
Consequently, on the three functions depicted in Figure~\ref{fig:monotone}, a \cprs will produce the same sequence $(\Xt,\st)_{t \in \NNN}$. Hence if convergence takes place on one of those functions, it will take place on the two others and at the same  convergence rate. This invariance property is shared by pattern search and the Nelder-Mead methods. A particular case of strictly increasing functions are affine functions: $ x \in \R \to \alpha x + \beta $ with $\alpha > 0$. Thus a consequence of the previous proposition is that \acprs\ are \emph{affine covariant}.

\begin{figure}
\centering
\includegraphics[width=0.32\textwidth]{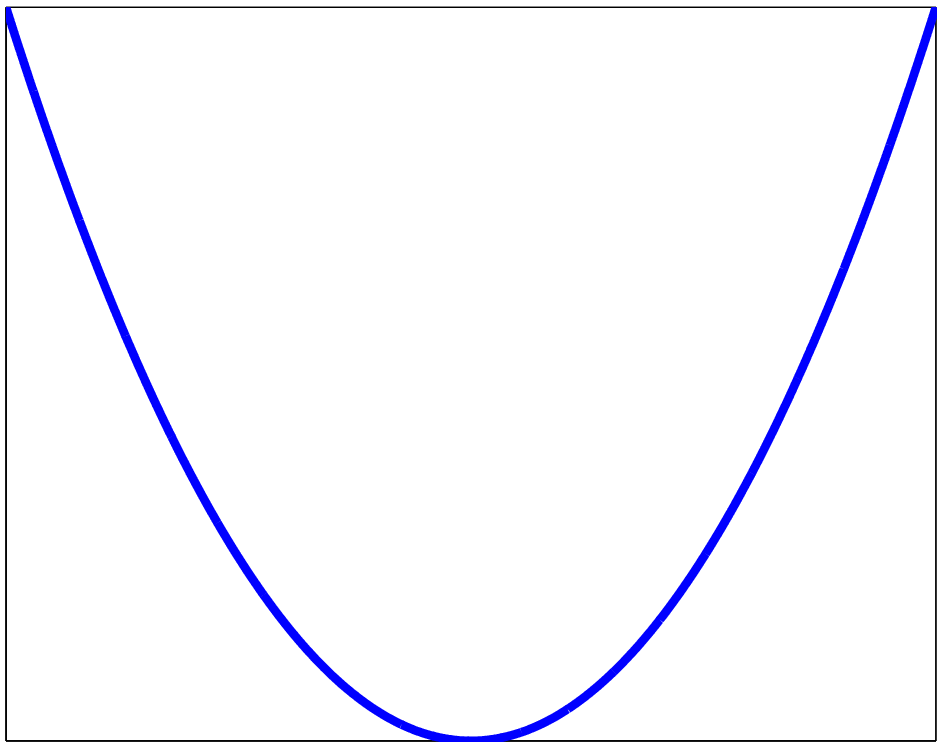}
\includegraphics[width=0.32\textwidth]{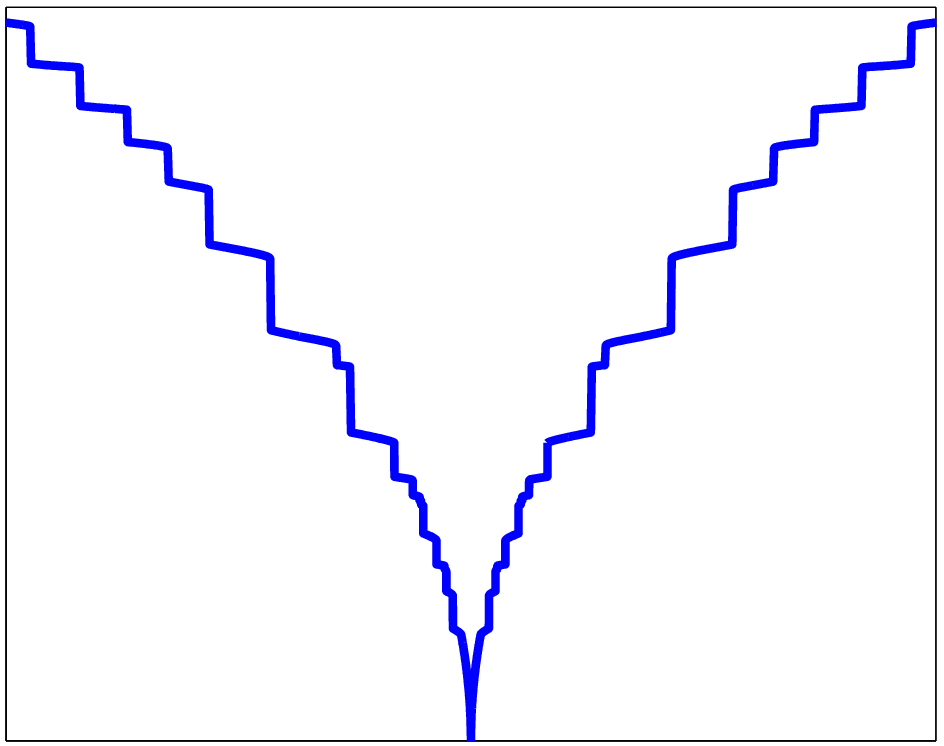}
\includegraphics[width=0.32\textwidth]{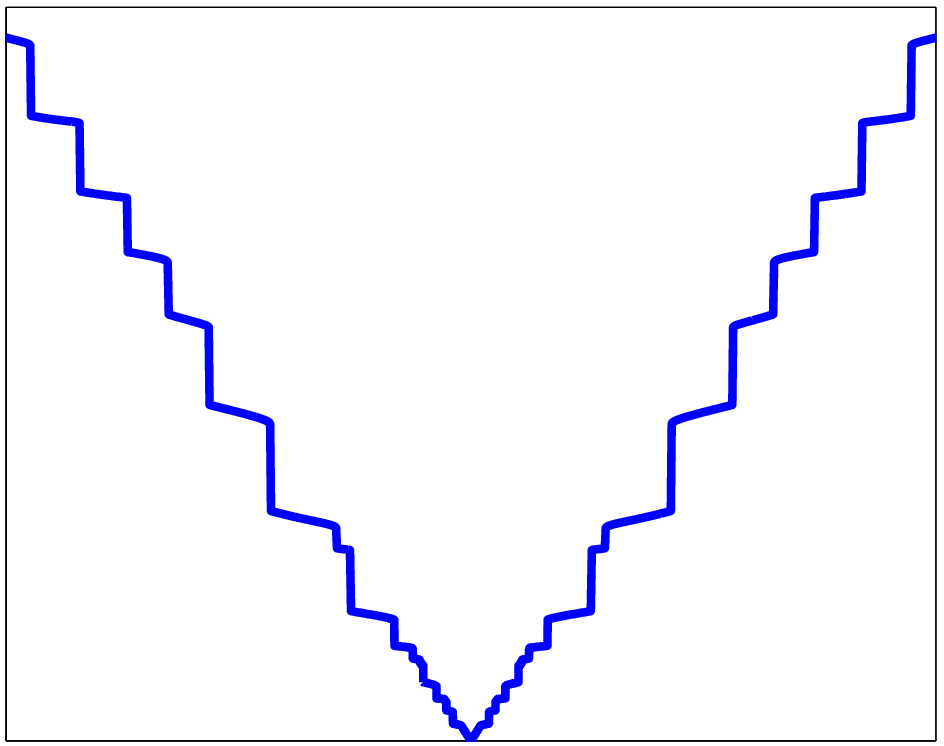}
\caption{\label{fig:monotone} Illustration of invariance to strictly increasing transformations. Representation of three instances of functions belonging to the invariance (w.r.t.\ strictly increasing transformations) class of $f(\x) = \| \x \|^{2}$ in dimension $1$. On the left the sphere function and middle and right functions $g \circ f$ for two different $g \in \Monotone$. On these three functions, a \cprs will generate the same sequence $(\Xt,\st)$ (see Proposition~\ref{prop:inv-strict-monotone}) and consequently convergence will take place at the same rate.}
\end{figure}

\subsubsection{Invariances in the Search Space: Translation and Scale-Invariance}\label{sec:ISS}
We consider now invariance of \cprs related to transformations in the search space. We use a classical approach to formalize invariance using homomorphisms transforming state variables via  a group action and visualize invariances with a commutative diagram \cite{grizzle1985structure,martin2004invariant}. We start by translation invariance, usually taken for granted in optimization. 

\paragraph*{Translation invariance}\label{sec:TI}
Most optimization algorithms are  translation invariant, which implies the same performance when optimizing $\x\mapsto f(\x-\xzero)$ for all $\xzero$ provided that a respective initialization of the algorithm is taking place. More precisely, let us consider $\R^{n}$ endowed with the addition operation $+$ as a group and consider $\SymGroup(\Omega)$ the symmetric group on $\Omega$, i.e the set of all bijective functions from $\Omega$ to itself (endowed with the composition $\circ$, it yields a group structure).
We remind the definition of a group homomorphism.
\begin{definition}[Group homomorphism]Let $(G_1, .)$ and $(G_2, *)$ be two groups. A mapping $\Phi: G_1 \to G_2$ is called group homomorphism if for all $x, y\in G_1$ we have $\Phi(x . y) = \Phi(x) * \Phi(y)$.
\generalnote{Terminology Niko (not validated by Cyril), i.e.\, if $\Phi$ commutes with the group operations}  \nnote{Where did you see that the commutation means that?}\niko{I didn't "see" this anywhere, it follows from my understand what "commute" means, namely that two "things" (e.g.\ two operators) commute if we can change their order without changing the result, i.e.\ their composition is commutative, see e.g.\ \url{http://mathworld.wolfram.com/Commute.html} considering a matrix to be an operator, or \href{http://chemwiki.ucdavis.edu/Physical_Chemistry/Quantum_Mechanics/02._Fundamental_Concepts_of_Quantum_Mechanics/Heisenberg's_Uncertainty_Principle/Commuting_Operators}{here}.}
\end{definition}

From the definition follows that for any $x \in G_{1}$, $\Phi(x^{-1})=[\Phi(x)]^{-1}$ where $x^{-1}$ (resp.$[\Phi(x)]^{-1}$)  denotes the inverse of $x$ (resp.\ of $[\Phi(x)]$). Note that in case $x$ belongs to an additive group, the inverse is denoted $-x$.
Let $\Isom( (\R^{n},+), (\SymGroup(\Omega), \circ) )$ be the set\niko{family seems a better word here, even though I don't know the exact meaning of it (I did check on wikipedia)}\anne{From this discussion between the difference between set and family I would keep set - as I do not see an obvious natural indexing of the elements of the group morphism\url{http://math.stackexchange.com/questions/35462/what-is-the-difference-between-family-and-set}}\niko{Indexing means parametrization, right? (Like, e.g., for the family of normal distributions). I would be a little surprised if our morphisms wouldn't be highly structured, which seem to be the point to call it a family, but it's not important anyway. }\anne{I had in mind for indexing, more a countable set. But you are right that we talk about the family of multivariate normal distributions ...} of group homomorphisms from $ (\R^{n},+)$ to $(\SymGroup(\Omega), \circ)$. For instance, consider $\Phi \in \Isom( (\R^{n},+), (\SymGroup(\Omega), \circ) )$, i.e.\ $\Phi: \y \in (\R^{n},+) \mapsto \Phi(\y)$ where $\Phi(\y)$ is a state space transformation such that for all  $(\x,\sigma) \in \R^{n} \times \Rplusstar$, $\Phi(\y)(\x,\sigma) = (\x + \y,\sigma)$.\del{ We also define $\mathcal{T}$ the translation in $\R^{n}$ which can also be seen as a group homomorphism: $\mathcal{T}: \y \in (\R^{n},+) \mapsto \mathcal{T}(\y) \in (\R^{n},+)$ with $\mathcal{T}(\y): \x \in \R^{n} \mapsto \x + \y \in \R^{n} $.} We are now ready to state a definition of translation invariance.
\mathnote{We had previously isomorphism instead of homomorphism - A group isomorphism is a bijective group homomorphism. Here we are not in the situation of a bijective group homomorphism as this would for instance imply that $\SymGroup(\Omega)$ and $(\R^{n},+)$ are isomorphms. i.e.\ it's not true for instance that the morphism $\Phi$ given above is surjective: consider the state space transformation $(\x,\sigma) \mapsto (\x, 2 \sigma)$ it has no antecedent by $\Phi$. Remind also that from the group homomorphism property (let us call the group homomorphism $h$) it holds that, the neutral element of the first group is send to the neutral element of the second group and that  $h(u^{-1}) = h(u)^{-1}$ }
\begin{definition}[Translation Invariance] A \asars\ with transition function $\FF$ is translation invariant if there exists a group homomorphism $\Phi \in \Isom((\R^{n},+), (\SymGroup(\Omega), \circ))$ such that for any objective function $f$, for any $\xzero \in \R^{n}$,  for any $(\x,\sigma) \in \Omega$ and for any $\uu \in \Uspace$
\begin{equation}\label{eq:TI}
\FF^{f(\x)}((\x,\sigma),\uu) = \underbrace{\Phi(\xzero)^{-1}}_{\Phi(-\xzero)} \left( \FF^{f\left( \x - \xzero \right)}(\Phi(\xzero)(\x,\sigma),\uu) \right)
\enspace,
\end{equation}
\new{or equivalently
\begin{equation}
\Phi(\xzero)\left(\FF^{f(\x)}((\x,\sigma),\uu)\right) =  \FF^{f\left( \x - \xzero \right)}(\Phi(\xzero)(\x,\sigma),\uu) 
\enspace,
\end{equation}}%
where the function to be optimized is shown as upper-script of the transition function $\FF$. 
\end{definition}
The previous definition means that a \asars\ algorithm is translation invariant, if we can find an homomorphism $\Phi$ (that depends on the algorithm) that defines for any translation $\xzero$, a search space transformation  $\Phi(\xzero)$, such that we can obtain $(\Xtt,\stt)$ from $(\Xt,\st)$ in two ways: (i) we apply one iteration of the algorithm to optimize $f(\x)$ or (ii) we apply the state space transformation $\Phi(\xzero)$ to the state of the algorithm, apply one iteration of the algorithm on $\x \mapsto f(\x - \xzero )$ and transform back the state of the algorithm via $\Phi(-\xzero)$. 
This property is pictured via a double-commutative diagram (see Figure~\ref{fig:commute}).

\begin{figure}
\unitlength0.56mm
\vspace{6mm}\centering
\begin{picture}(100,50)
\put(-1,50){$(\X_{t},\st)$}
\put(22,52){\vector(1,0){45}}
\put(33,56){$\FF^{f(\x)}$}
\put(69,50){$(\Xtt,\stt)$}
\put(-1,0){$(\X'_{t}, \st')$}
\put(22,2){\vector(1,0){45}}
\put(33,6){$\FF^{f(\x - \xzero)} $}
\put(69,0){$(\Xtt',\stt')$}
\put(8,45){\vector(0,-1){35}}
\put(10,10){\vector(0,1){35}}
\put(-9,25){$\Phi({\xzero})$}
\put(12,25){$\Phi({-\xzero})$}
\put(87,10){\vector(0,1){35}}
\put(85,45){\vector(0,-1){35}}
\put(68,25){$\Phi({\xzero})$}
\put(88,25){$\Phi(-\xzero)$}
\end{picture}
\begin{picture}(100,50)
\put(12,50){$(\X_{t},\st)$}
\put(35,52){\vector(1,0){45}}
\put(46,56){$\FF^{f(\x)}$}
\put(82,50){$(\Xtt,\stt)$}
\put(12,0){$(\X'_{t}, \st')$}
\put(35,2){\vector(1,0){45}}
\put(48,6){$\FF^{f(\alpha \x)} $}
\put(82,0){$(\Xtt',\stt')$}
\put(22,45){\vector(0,-1){35}}
\put(24,10){\vector(0,1){35}}
\put(8,25){$\Phi({\alpha})$}
\put(26,25){$\Phi({\frac{1}{\alpha}})$}
\put(99,10){\vector(0,1){35}}
\put(97,45){\vector(0,-1){35}}
\put(101,25){$\Phi(\frac{1}{\alpha})$}
\put(83,25){$\Phi({\alpha})$}
\end{picture}
\caption{\label{fig:commute} Left: Commutative diagram for the translation invariance property applied to one iteration of a step-size adaptive algorithm ($\Phi(-\x_{0}) = [\Phi(\x_{0})]^{-1}$). Right: Commutative diagram for the scale-invariance property applied to one iteration of a step-size adaptive algorithm ($\Phi(1/\alpha) = [\Phi(\alpha)]^{-1}$). The homomorphisms $\Phi$ (different on the left and right) define for any $\x_{0}$ (resp. $\alpha$) a search space transformation $\Phi(\x_{0})$ (resp. $\Phi(\alpha)$).}
\end{figure}

We consider in the next proposition some specific properties of $\Sol$ and $\GG$ that render a \cprs translation invariant. These properties are satisfied for algorithms presented in Section~\ref{sec:ex}.
\begin{proposition}\label{prop:TI}
Let $(\Sol,\GG,\Uspace,p_{\U})$ be a \acprs\ according to Definition~\ref{def:SSAES}. If the following conditions are satisfied:\\ (i) 
for all $\x, \xzero \in \R^{n}$ for all $\sigma > 0$, for all $\uu^{i} \in \UUU$
\begin{equation}\label{eq:condTISol}
\Sol(( \x+\xzero,\sigma),\uu^{i}) = \Sol((\x,\sigma),\uu^{i}) + \xzero
\end{equation}
(ii)
for all $\x, \xzero \in \R^{n}$ for all $\sigma > 0$, for all $\y \in \Uspace$
\begin{equation}\label{eq:condTIGG}
\GG_{1}((\x+\xzero,\sigma),\y)
= 
\GG_{1}((\x,\sigma),\y) + \xzero
\end{equation}
then $(\Sol,\GG,\Uspace,p_{\U})$ is translation invariant and the associated group homomorphism $\Phi$ is defined by
\begin{equation}\label{eq:momoTI}
\Phi(\xzero)(\x,\sigma) = (\x+\xzero,\sigma) \mbox{ for all } \xzero, \x, \sigma \enspace.
\end{equation}
In addition, assuming that the $\Sol$ function satisfies property \eqref{eq:condTISol}, then if $(\Sol,\GG,\Uspace,p_{\U})$ is translation invariant with \eqref{eq:momoTI} as homomorphism, then \eqref{eq:condTIGG} is satisfied.
\end{proposition}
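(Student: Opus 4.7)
The plan is to verify translation invariance by unwinding the defining identity \eqref{eq:TI} with $\FF$ expressed through the composition \eqref{eq:FvsG}, and then propagate the hypotheses through each piece of that composition.

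As a preliminary step, I would check that $\Phi$ defined by \eqref{eq:momoTI} is indeed a group homomorphism from $(\R^{n},+)$ to $(\mathcal{A}(\Omega),\circ)$: for any $\xzero,\y\in\R^n$, $\Phi(\xzero+\y)(\x,\sigma)=(\x+\xzero+\y,\sigma)=\Phi(\xzero)(\Phi(\y)(\x,\sigma))$, so $\Phi(\xzero+\y)=\Phi(\xzero)\circ\Phi(\y)$; in particular $\Phi(\xzero)^{-1}=\Phi(-\xzero)$ and $\Phi(-\xzero)(\x,\sigma)=(\x-\xzero,\sigma)$.

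For the forward direction, fix $\xzero,\x,\sigma,\uu$, set $\tilde f(\cdot)=f(\cdot-\xzero)$, and compute the right-hand side of \eqref{eq:TI}. Applying \eqref{eq:FvsG} to $\tilde f$ at state $(\x+\xzero,\sigma)$, I first look at the candidate solutions $\Sol((\x+\xzero,\sigma),\uu^i)$. By hypothesis (i), these equal $\Sol((\x,\sigma),\uu^i)+\xzero$, so that $\tilde f(\Sol((\x+\xzero,\sigma),\uu^i)) = f(\Sol((\x,\sigma),\uu^i))$ for each $i$. Consequently the ordering permutation is unchanged: $\Perm^{\tilde f}=\SSel(\tilde f(\Sol((\x+\xzero,\sigma),\uu^i))_i)=\SSel(f(\Sol((\x,\sigma),\uu^i))_i)=\Perm^{f}$, call it $\Perm$. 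Thus
$$
\FF^{\tilde f}((\x+\xzero,\sigma),\uu)=\bigl(\GG_1((\x+\xzero,\sigma),\Perm*\uu),\,\GG_2(\sigma,\Perm*\uu)\bigr).
$$
By hypothesis (ii) applied with $\y=\Perm*\uu$, the first coordinate equals $\GG_1((\x,\sigma),\Perm*\uu)+\xzero$; since $\GG_2$ does not depend on $\x$ (its signature in Definition~\ref{def:SSAES}), the second coordinate is $\GG_2(\sigma,\Perm*\uu)$ unchanged. Applying $\Phi(-\xzero)$ subtracts $\xzero$ from the first coordinate and leaves the second fixed, yielding precisely $\GG((\x,\sigma),\Perm*\uu)=\FF^{f}((\x,\sigma),\uu)$, which is \eqref{eq:TI}.

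For the converse, assume (i) holds and the algorithm is translation invariant with $\Phi$ given by \eqref{eq:momoTI}. I rerun the above computation: both sides of \eqref{eq:TI} contain the common factor $\GG_2(\sigma,\Perm*\uu)$ and the ordering permutation argument above still applies, so \eqref{eq:TI} reduces to
$$
\GG_1((\x,\sigma),\Perm*\uu) = \GG_1((\x+\xzero,\sigma),\Perm*\uu) - \xzero
$$
for all $\x,\xzero,\sigma$ and all $\uu\in\Uspace$, where $\Perm$ is the ordering permutation induced by $f$. To conclude \eqref{eq:condTIGG} for arbitrary $\y\in\Uspace$, I would fix $\y$ and choose $\uu$ together with $f$ so that $\Perm*\uu=\y$: for instance pick the identity permutation $\Perm=\mathrm{Id}$ by taking any $f$ that is strictly increasing in the index $i$ along the points $\Sol((\x,\sigma),\y^i)$ (which, by invariance to strictly monotonic transformations, we may always construct, e.g.\ by choosing $f$ to take prescribed values on those points assuming genericity, or by perturbation); then $\uu=\y$ and the identity above becomes exactly \eqref{eq:condTIGG}.

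The only subtle step is the last one in the converse: realizing an arbitrary $\y\in\Uspace$ as $\Perm*\uu$ with $\Perm$ the ordering permutation of some legitimate objective function. This is essentially a freedom-of-$f$ argument and is where I would take most care in the final write-up.
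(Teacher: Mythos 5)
Your proof is correct and follows essentially the same route as the paper's: condition (i) forces the ordering permutation to be identical for $f$ at $(\x,\sigma)$ and for $\x\mapsto f(\x-\xzero)$ at $(\x+\xzero,\sigma)$, and condition (ii) is exactly the commutation of $\GG$ with $\Phi(\xzero)$, which together give \eqref{eq:TI}. The paper dismisses the converse as immediate; your extra care in realizing an arbitrary $\y\in\Uspace$ as $\Perm*\uu$ (e.g.\ by choosing $f$ so that the ordering permutation is the identity) is a legitimate refinement of the same argument rather than a different approach.
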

\begin{proof}
Consider the homomorphism defined in \eqref{eq:momoTI}, then \eqref{eq:condTIGG} writes
\begin{equation}\label{eq:tototo}
\GG_1(\Phi(\xzero)(\x,\sigma),\y) = \Phi(\xzero) \left( \GG_1((\x,\sigma),\y) \right) \enspace,
\end{equation}
and \eqref{eq:condTISol} writes
$
\Sol(\Phi(\xzero)(\x,\sigma),\uu^{i}) - \xzero = \Sol((\x,\sigma),\uu^{i}) 
$.
This latter equation implies that the same permutation $\Perm$ will result from ordering solutions generated by the $\Sol$ function on $f$ from $(\x,\sigma) $ or on $f(\x - \xzero)$ starting from $\Phi(\xzero)(\x,\sigma)$. Using \eqref{eq:tototo} we hence have
$
\GG(\Phi(\xzero)(\x,\sigma), \Perm^{f(\x-\xzero)}_{\Phi(\xzero)(\x,\sigma)} * \uu)
= \Phi(\xzero) \left( \GG((\x,\sigma), \Perm^{f}_{(\x,\sigma)}*\uu) \right)
$
which turns out to coincide with \eqref{eq:TI}. The inverse is immediate. 
\end{proof}

\paragraph*{Scale-invariance property}\label{sec:SI}

Scale-invariance is a particular case of affine invariance in the search space where we consider transformation of a function $\x \mapsto f(\x)$ into $\x \mapsto f(\alpha \x) $ for $\alpha > 0$.
The scale invariance property translates \del{the fact }that the algorithm has no intrinsic notion of scale. It can be defined similarly to translation invariance by considering the set of group homomorphisms from the group $(\Rplusstar,.)$ (where $.$ denotes the multiplication between two real numbers) to the group $(\SymGroup(\Omega),\circ)$. We denote this set $\Isom((\Rplusstar,.),(\SymGroup(\Omega),\circ))$.

\del{Formally, consider $\Rplus_{>}$ endowed with the multiplication as a group and define the group homomorphism $\Phi: \alpha \in \Rplus_{>} \mapsto \Phi(\alpha) $ as: for all $(\x,\sigma) \in \R^{n} \times \Rplus_{>} $, $\Phi(\alpha) (\x,\sigma) = (\x/\alpha,  \sigma / \alpha)$. }
\begin{definition}[Scale-invariance]
A \asars\ with transition function $\FF$ is \emph{scale-invariant} if there exists an homomorphism $\Phi \in \Isom((\Rplusstar,.),(\SymGroup(\Omega),\circ))$ such that for any $f$, for any $\alpha > 0$, for any $(\x,\sigma) \in \R^{n} \times \Rplusstar$ and for any $\uu \in \Uspace$
\begin{equation}\label{eq:SI}
\FF^{f(\x)}((\x,\sigma),\uu) = \Phi(1/\alpha) \left( \FF^{f\left(\alpha \x \right)}(\Phi(\alpha)(\x,\sigma),\uu) \right)
\enspace,
\end{equation}
where the function optimized is shown as upper-script of the transition function $\FF$. 
\end{definition}
In the previous definition we have used the fact that for any element $\alpha$ of the multiplicative group $(\Rplusstar,.)$ its inverse is $1/\alpha$.
The scale-invariance property can be pictured via a double-commutative diagram (see Figure~\ref{fig:commute}).

We derive in the next proposition some conditions for a \acprs\ to be scale-invariant that will be useful in the sequel to prove that the algorithms presented in Section~\ref{sec:ex} are scale-invariant.
\begin{proposition}\label{lem:scaleinvariant}
Let $(\Sol,\GG,\Uspace,p_{\U})$ be a \acprs\ according to Definition~\ref{def:SSAES}. If for all $\alpha > 0$ the following three conditions are satisfied: (i) for all $\uu^{i} \in \UUU, (\x,\sigma) \in \R^{n} \times \Rplusstar$, 
\begin{equation}\label{eq:SIsol}
\Sol((\x,\sigma),\uu^{i}) = \alpha \Sol \left( \left(\frac{\x}{\alpha}, \frac{\sigma}{\alpha}\right), \uu^{i}  \right)
\end{equation}
(ii) for all $\y \in \Uspace, (\x,\sigma) \in \R^{n} \times \Rplusstar$
\begin{equation}\label{eq:SIG1}
\GG_{1}( (\x,\sigma),\y) = \alpha \GG_{1}\left(\left(\frac{\x}{\alpha},\frac{\sigma}{\alpha}\right),\y\right)
\end{equation}
and (iii) for all $\y \in \Uspace,\sigma \in \Rplusstar$
\begin{equation}\label{eq:SIG2}
\GG_{2}(\sigma,\y)= \alpha \GG_{2}\left(\frac{\sigma}{\alpha},\y\right)\enspace,
\end{equation}
then it is scale invariant and the associated homomorphism is $\Phi: \alpha \in \Rplus_{>} \mapsto \Phi(\alpha) $ where for all $(\x,\sigma) \in \R^{n} \times \Rplus_{>} $, 
\begin{equation}\label{eq:momoSI}
\Phi(\alpha) (\x,\sigma) = (\x/\alpha,  \sigma / \alpha) \enspace.
\end{equation}
Inversely, assuming that the $\Sol$ function satisfies \eqref{eq:SIsol}, if $(\Sol,\GG,\Uspace,p_{\U})$ is scale-invariant with the homomorphism defined in \eqref{eq:momoSI}, then \eqref{eq:SIG1} and \eqref{eq:SIG2} are satisfied.
\end{proposition}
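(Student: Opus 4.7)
My plan is to mirror the proof of Proposition~\ref{prop:TI}, translating each step from the additive translation group $(\R^{n},+)$ to the multiplicative scaling group $(\Rplusstar,\cdot)$. First I would verify that the map $\Phi$ in \eqref{eq:momoSI} is indeed a group homomorphism into $(\mathcal{A}(\Omega),\circ)$: from $\Phi(\alpha\beta)(\x,\sigma)=(\x/(\alpha\beta),\sigma/(\alpha\beta))=\Phi(\alpha)\circ\Phi(\beta)(\x,\sigma)$ and $\Phi(1)=\mathrm{Id}_{\Omega}$, it follows that $\Phi$ is a homomorphism and in particular $\Phi(1/\alpha)=\Phi(\alpha)^{-1}$, so the two sides of \eqref{eq:SI} make sense.

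For the direct implication I would unfold $\FF$ via \eqref{eq:FvsG} and treat separately the ordering step and the update step. Setting $\x^{i}:=\Sol((\x,\sigma),\uu^{i})$, hypothesis \eqref{eq:SIsol} (applied with the roles of $(\x,\sigma)$ and $(\x/\alpha,\sigma/\alpha)$ exchanged) gives $\Sol(\Phi(\alpha)(\x,\sigma),\uu^{i})=\x^{i}/\alpha$, so evaluating the scaled objective yields $f(\alpha\cdot \x^{i}/\alpha)=f(\x^{i})$; since $\SSel$ depends on its inputs only through their order and $\alpha>0$ preserves order (and we obtain the exact same real numbers here), both sides of \eqref{eq:SI} produce the same permutation $\Perm$. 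Next, combining \eqref{eq:SIG1} and \eqref{eq:SIG2} componentwise shows that $\GG(\Phi(\alpha)(\x,\sigma),\y)=(\GG_{1}((\x,\sigma),\y)/\alpha,\GG_{2}(\sigma,\y)/\alpha)=\Phi(\alpha)\,\GG((\x,\sigma),\y)$ for every $\y\in\Uspace$. Specializing $\y=\Perm*\uu$ and applying $\Phi(1/\alpha)=\Phi(\alpha)^{-1}$ to both sides recovers \eqref{eq:SI}.

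For the converse, the same permutation argument still holds because it used only \eqref{eq:SIsol}, so \eqref{eq:SI} reduces to $\GG(\Phi(\alpha)(\x,\sigma),\Perm*\uu)=\Phi(\alpha)\,\GG((\x,\sigma),\Perm*\uu)$. Since the scale-invariance identity is required to hold for every objective $f$ and every $\uu\in\Uspace$, and since we can always choose $f$ so that the ranking of the $\x^{i}$ makes $\Perm$ the identity permutation, letting $\uu$ range over $\Uspace$ shows that $\GG(\Phi(\alpha)(\x,\sigma),\y)=\Phi(\alpha)\,\GG((\x,\sigma),\y)$ for every $\y\in\Uspace$. Projecting onto the two coordinates of $\GG=(\GG_{1},\GG_{2})$ — using that $\Phi(\alpha)$ divides the first component by $\alpha$ and that $\GG_{2}$ depends only on $\sigma$ — yields exactly \eqref{eq:SIG1} and \eqref{eq:SIG2}. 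The only mild subtlety, and the ``hardest'' point of the proof, is precisely this ability to realize every $\y$ as some $\Perm*\uu$ by a suitable choice of $f$; everything else is algebraic bookkeeping, resting on the fact that $\Phi(\alpha)$ scales the search point and the step-size by the \emph{same} factor, which is what makes the ordering invariant and the update equivariant.
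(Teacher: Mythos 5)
Your proof is correct and follows essentially the same route as the paper's: from \eqref{eq:SIsol} the candidate solutions for $f(\alpha\,\cdot)$ started at $\Phi(\alpha)(\x,\sigma)$ have the very same $f$-values as those for $f$ started at $(\x,\sigma)$, hence the same permutation, and then \eqref{eq:SIG1}--\eqref{eq:SIG2} give the equivariance of $\GG$ componentwise. Your converse (realizing any $\y\in\Uspace$ as $\Perm*\uu$ by choosing $f$ to make the permutation trivial) merely spells out what the paper dismisses as ``immediate,'' and your explicit check that $\Phi$ is a group homomorphism is a harmless addition the paper leaves implicit.
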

\begin{proof}
From (i) we deduce that for any $(\x, \sigma)$ in $\R^{n} \times \Rplusstar$ and any $\uu^{i} \in \UUU$,
$$
f(\Sol((\x,\sigma),\uu^{i})) = f \left( 
\alpha \Sol \left( \left(\frac{\x}{\alpha}, \frac{\sigma}{\alpha}\right), \uu^{i}  \right)
\right)
$$
which implies that the same permutation $\Perm$ will result from ordering solutions (with $\OOrd$) on $f$ starting from $(\x,\sigma)$ or on $f(\alpha \x)$ starting from $(\x/\alpha,\sigma/\alpha)$, i.e.\ with some obvious notations
$\Perm_{(\x,\sigma)}^{f(\x)} = \Perm_{(\frac{\x}{\alpha}, \frac{\sigma}{\alpha})}^{f(\alpha \x)}$.
On the other hand using \eqref{eq:FvsG} the following holds
\begin{align}
\F^{f(\x)}((\x,\sigma),\uu) & = \GG((\x,\sigma),\Perm_{(\x,\sigma)}^{f(\x)}*\uu) \\
\F^{f(\alpha \x)} \left( \left( \frac{\x}{\alpha},\frac{\sigma}{\alpha} \right), \uu \right) & = \GG \left( \left( \frac{\x}{\alpha},\frac{\sigma}{\alpha} \right) , \Perm_{(\frac{\x}{\alpha}, \frac{\sigma}{\alpha})}^{f(\alpha \x)}*\uu \right) \enspace.
\end{align}
Assuming (ii) and (iii) we find that 
$
\F^{f(\x)}((\x,\sigma),\uu) = \alpha \F^{f(\alpha \x)} \left( \left( \frac{\x}{\alpha},\frac{\sigma}{\alpha} \right), \uu \right).
$
Using the homomorphism defined in \eqref{eq:momoSI} the previous equation reads
$$
\F^{f(\x)}((\x,\sigma),\uu) = \Phi(1/\alpha) \F^{f(\alpha \x)} \left( \left( \Phi(\alpha) (\x,\sigma) \right), \uu \right)
$$
which is \eqref{eq:SI}.
Hence we have found an homomorphism such that \eqref{eq:SI} holds, which is the definition of scale-invariance.
The inverse is immediate.
\end{proof}

Remark that given a \acprs\ that satisfies the conditions (i), (ii) and (iii) from the previous proposition, we can reparametrize the state of the algorithm by $\tilde{\st} = \st^{2}$ (if the sampling distribution is Gaussian, this means parametrize by variance instead of standard deviation) leaving unchanged the parametrization for the mean vector. Then the conditions of the previous proposition are not anymore valid for the new parametrization. Yet the algorithm is still scale-invariant but a different morphism needs to be considered, namely
\begin{equation}\label{eq:morphism2}
\Phi(\alpha)(\x,\tilde \sigma) = (\x/\alpha,\tilde \sigma/\alpha^{2}) \enspace.
\end{equation}
Hence the sufficient conditions derived are not general, however they cover typical settings for \acprs. Adapting however Proposition~\ref{lem:scaleinvariant} for other parametrizations is usually easy.

\subsection{Examples of \acprs}\label{sec:ex}

In order to illustrate the \acprs\ framework introduced, we shortly present in this section several examples of \acprs\ algorithms and analyze their invariance properties. For the interested reader, a more thorough description of the working principle and the rationale behind the algorithms is presented in Appendix~\ref{app:ex}.

The algorithms presented pertain to the class of Evolution Strategies (ES) where multivariate normal distribution are used to sample new solutions. We consider first the step-size adaptive ES using cumulative step-size adaptation (CSA) \cite{hansen1995adaptation} (however here with a specific parameter setting that disables the cumulation of information over past iterations) that corresponds to the step-size update rule of the state-of-the art CMA-ES algorithm \cite{hansen2001}. Given the current state $(\Xt,\st)$, $p$ candidate solutions are sampled according to
\begin{equation}\label{eq:mut}
\Sol((\Xt,\st),\Utt^{i}) (= \Xtt^i ) = \Xt + \st \Utt^i \,, \, i=1,\ldots,p \enspace,
\end{equation}
where $(\Utt^i)_{1 \leq i \leq p}$ are i.i.d.\ and follow standard multivariate normal distributions. Hence $\Uspace=\R^{n \times p}$ and $p_{\U}(\uu^{1},\ldots,\uu^{p})$ is the product $p_{\N}(\uu^{1})  \ldots p_{\N}(\uu^{p})$ where 
$ p_{\N}(\x) = \frac{1}{( 2 \pi)^{n/2}} \exp \left( - \frac12 \x^{T} \x \right)$. Let $\Ytt$ be the the vector  $\Ytt=\Perm * \Utt = (\Utt^{\Perm(1)}, \ldots, \Utt^{\Perm(p)})$ where $\Perm$ is the permutation resulting from the ranking of objective function values of the solutions (see \eqref{eq:perm}). The update of $\Xt$ reads
\begin{equation}\label{eq:commamean}
\Xtt =  \GG_{1}((\Xt,\st),\Ytt) : = \Xt + \LRm \st \sum_{i=1}^p w_i \Ytt^{i}
\end{equation}
where  $\LRm \in \Rplus $ is the learning rate (often set to $1$) and $w_{i} \in \R$ are weights that satisfy $w_{1} \geq \ldots \geq w_{p}$ and $\sum_{i=1}^{p} |w_{i}| = 1$. When $\LRm = 1$ the update corresponds to the weighted average of the \new{ranked} candidate solutions $\Xtt^{\Perm(i)}$. The step-size is then updated according to 
\begin{equation}\label{eq:ssDSA3}
\stt = \GG_{2}(\st,\Ytt) = \st \exp \left( \LRsigma \left( \frac{ \sqrt{\mueff } \| \sum_{i=1}^{p} w_{i} \Ytt^{i} \|}{E[\| \N(0,\Id) \|]} - 1 \right) \right)
\end{equation}
where $\LRsigma>0$ is a learning rate (usually set close to one) and $\mueff = 1/\sum w_{i}^{2} $.
Overall, the update function associated to the CSA without cumulation reads
$$
\GG_{\dsa}((\x,\sigma),\y) = \left( \begin{smallmatrix}   \x + \sigma \LRm \sum_{i=1}^{p} w_{i} \y^{i} \\ \sigma  \exp\left( \LRsigma \left( \frac{ \sqrt{\mueff} \| \sum_{i=1}^{p} w_{i} \y^{i} \|}{E[\| \N(0,\Id) \|]} - 1 \right) \right) \end{smallmatrix} \right)  \enspace.
$$
The second example corresponds to the natural gradient update for the step-size with exponential parametrization (\xNES, that stands for exponential natural evolution strategy) \cite{Glasmachers2010}. It uses the same equations to sample solutions, only the step-size update differs and writes
\begin{align}\label{eq:nnes1}
\stt  & = \st \exp\left( \frac{\LRsigma}{2 \dim} \Tr \left( \sum_{i=1}^{p} w_i \Ytt^{i}(\Ytt^{i})^T - \Id \right) \right) \\\label{eq:nnes2} & = \st \exp \left( \frac{\LRsigma}{2\dim} \sum_{i=1}^p w_i ( \| \Ytt^{i}  \|^2 - \dim)  \right).
\end{align}
\generalnote{xNES does not work for n to infinity because no variation in the length of the vectors. Actually not so clear because of the square - the variance might "mess up" this argument.}
The update function for the \xNES\ step-size adaptive algorithm thus reads
\begin{equation}\label{eq:xNES}
\GG_{\xNES}((\x,\sigma),\y) = \left( \begin{smallmatrix}   \x + \sigma \LRm \sum_{i=1}^{p} w_{i} \y^{i} \\ \sigma  \exp\left( \frac{\LRsigma}{2 \dim} \sum_{i=1}^{p} w_{i} ( \| \y^{i}  \|^{2} - \dim) 
\right) \end{smallmatrix} \right)  \enspace.
 \end{equation}
Here, when $\LRm$ and $\LRsigma$ are equal, they coincide with the step-size of the (natural) gradient step of a joint criterion defined on the manifold of Gaussian distributions with covariance matrices equal to a scalar times identity \cite{Glasmachers2010,akimoto2010bidirectional,ollivier2013information}.

\paragraph{Invariance properties} The two different \cprs algorithms presented above are translation invariant and scale-invariant. They indeed satisfy the sufficient conditions derived in Proposition~\ref{prop:TI} and Proposition~\ref{lem:scaleinvariant}.

We present another example where the space $\Uspace$ does not equal $\R^n$. The algorithm belongs to the class of self-adaptive evolution strategies \cite{Rechenberg,Schwefel:77} where parameters are added to the ``genome"  of a solution (vector that encodes a solution) to undergo some variations. We consider a simple example where $\Uspace$ equals $\R^{(n+1) \times p}$. The $n$ first coordinates of an element $\Utt^{i} \in \UUU=\R^{n+1}$ denoted $[\Utt^{i}]_{1\ldots n}$ 
 ($\in \R^{n}$) correspond to the coordinates of a standard multivariate normal distribution vector and the last coordinate denoted $[\Utt^{i}]_{n+1}$ to a standard normal distribution. The candidate solutions sampled via the solution function satisfy
\begin{equation}\label{sample:SA}
\Sol((\Xt,\st),\Utt^{i})= \Xtt^{i} = \Xt + \st \exp \left( \tau [\Utt^{i}]_{n+1} \right) [\Utt^{i}]_{1\ldots n}
\end{equation}
with $\tau > 0$. The update of the mean vector and step-size corresponds to selecting the best solution and its associated step-size, more precisely
\begin{equation}\label{mean:SA}
\Xtt = \Xt + \st \exp( \tau [\Ytt^{1}]_{n+1}) [\Ytt^{1}]_{1 \ldots n}
\end{equation}
and the update for the step-size is
\begin{equation}\label{ss:SA}
\stt = \st \exp ( \tau  [\Ytt^{1}]_{n+1} ) \enspace.
\end{equation}
A step-size adaptive Evolution Strategy satisfying \eqref{sample:SA},\eqref{mean:SA} and \eqref{ss:SA} is called $(1,p)$ self-adaptive step-size ES ($(1,p)$-SA). The $(1,p)$ refers to the fact that a single solution is selected out of the $p$.
The update function $\GG$ for the $(1,p)$-SA reads
$$
\GG_{(1,p){\rm-SA}}((\x,\sigma),\y) = \left( \begin{smallmatrix} \x + \sigma \exp(\tau [\y^{1}]_{n+1}) [\y^{1}]_{1\ldots n} \\ 
\sigma \exp( \tau [\y^{1}]_{n+1} )
 \end{smallmatrix} \right)  \enspace.
$$
\paragraph{Invariances} In virtue of Proposition~\ref{prop:TI} and Proposition~\ref{lem:scaleinvariant} the $(1,p)$-SA is translation and scale-invariant.

We present in a last example, the (1+1)-ES with $1/5$ success rule \cite{Rechenberg} (also introduced as step-size random search  \cite{Schumer:68} or compound random search \cite{Devroye:72}). In this algorithm, $f(\Xt)$ is non-increasing (the algorithm is termed elitist as the best solution cannot be lost) and $\Xt$ is thus the best solution ever seen till iteration $t$. A single solution is sampled from $\Xt$ as
\begin{equation*}
\Xtt^{1} = \Xt + \st \Utt^{1}
\end{equation*}
where $\Utt^{1} \in \R^{\dim}$ follows a standard multivariate normal distribution. The solution is accepted if the candidate solution $\Xtt^1$ is better than $\Xt$ and rejected otherwise. Hence we denote $\Utt^{2}=0 \in \R^{n}$ the zero vector and take $\Utt=(\Utt^{1},\Utt^{2})$ such that $\Uspace = \R^{n \times 2 }$ and the probability distribution of $\U$ equals $p_{\U}(\uu^{1},\uu^{2})=p_{\N}(\uu^{1})  \delta_{0} (\uu^{2})$ where $\delta_{0}$ is the Dirac delta function. The $\Sol$ function corresponds thus to the function in \eqref{eq:mut} and the update for $\Xt$ is similar to \eqref{eq:commamean} with weights $(w_{1},w_{2})=(1,0)$.

The step-size is updated so as to maintain a certain probability of success $\ptarget \in )0,1( $---a probability of success around $1/5$ turns out to be near to optimal in some scenario, see Appendix~\ref{app:ex} for more explanations and references.
One proposed implementation reads
\begin{align*}
\stt & = \st \exp \left( \LRsigma \frac{1_{\{ \Ytt^{1} \neq 0 \}} - \ptarget }{1 - \ptarget}  \right) 
\end{align*}
where $\LRsigma >0$ is a learning rate coefficient. Denoting $\gamma = \exp(\LRsigma)$ and $q=\frac{\ptarget}{1-\ptarget}$ (for a target success probability set to $1/5$, the odds ratio $q=1/4$) yields
\begin{equation}\label{eq:ssOPO}
\stt = \st \left( \factonefifth 1_{\{ \Ytt \neq 0 \}} + \factonefifth ^{-q}  1_{\{ \Ytt^{1} = 0 \}}\right) = \st \left( (\factonefifth - \factonefifth ^{-q}) 1_{\{ \Ytt^{1} \neq 0 \}} + \factonefifth ^{-q}  \right) \enspace.
\end{equation}
Overall, the update transformation for the $(1+1)$-ES with generalized one-fifth success rule is
$$
\GG_{\plusES_{\onefifth}}((\x,\sigma),\y) = \left( \begin{smallmatrix}   \x + \sigma  \y^{1} \\ 
\sigma  
\left( (\factonefifth - \factonefifth ^{-q}) 1_{\{ \y^{1} \neq 0 \}} + \factonefifth ^{-q}  \right)
\end{smallmatrix} \right)  \enspace.
 $$
\paragraph{Invariance} Using again Proposition~\ref{prop:TI} and Proposition~\ref{lem:scaleinvariant}, the $(1+1)$-ES with generalized one-fifth success rule is translation and scale-invariant.

\begin{remark}
In all the examples presented, the $p$ components $(\Utt^{i})_{1 \leq i \leq p}$ of the vectors $\Utt$ are independent. It is however not a requirement of our theoretical setting. \hide{Some algorithms using within an iteration non independent samples were recently introduced \cite{abh2011b,abh2011a} and could be analyzed with the Markov chain approach presented in this paper.}
\end{remark}

%
\section{Scaling-Invariant Functions}\label{sec:scaling-inv}

In this section we define the class of scaling-invariant functions that preserve the $f$-ordering of two points centered with respect to a reference point $\xstar$ when they are scaled by any given factor. 

\begin{definition}[Scaling-invariant function]\label{def:sif1}
A function $f: \R^{\dim} \mapsto \R$ is scaling-invariant with respect to $\xstar \in \R^{n}$, if for all $\scaleSI > 0$, $\x, \y \in \R^{\dim}$
\nnnew{
\begin{equation}\label{eq:scale-inv}
 f(\xstar +\x ) \leq f(\xstar + \y) 
   \Leftrightarrow
   f( \xstar+ \scaleSI \x ) \leq f( \xstar+ \scaleSI \y ) \enspace .
\end{equation}
}
\end{definition}
\nnnew{
This definition implies that two points $\xstar + \x $ and $\xstar + \y$ belong to the same level set if and only if for all $\scaleSI>0$ also $\xstar +  \scaleSI \x$ and $\xstar +  \scaleSI \y $ belong to the same level set, i.e.\
$$
f(\xstar + \x) = f(\xstar + \y) \Leftrightarrow f(\xstar +  \scaleSI \x) = f(\xstar +  \scaleSI \y) \enspace.
$$}
Hence, scaling-invariance can be equivalently defined with strict inequalities in \eqref{eq:scale-inv}.
Remark that if $f$ is scaling-invariant, then for any $g$ strictly increasing the composite $g \circ f$ is also scaling-invariant.

\begin{proposition}\label{prop:unimodalityOFScaleInv}
Let $f$ be a scaling-invariant function, then, $f$ cannot admit any \new{strict} local optima except $\xstar$. In addition, on a line crossing $\xstar$ a scaling invariant function is either constant equal to $f(\xstar)$ or cannot admit a local plateau, i.e.\ a ball where the function is \new{locally} constant.
\del{Let $f$ be a scaling-invariant function, then, $f$ cannot admit any strict local optima except $\xstar$. In addition, $f$ cannot admit \emph{local} plateaus\del{on a line crossing $\xstar$ a scaling invariant function is either constant equal to $f(\xstar)$ or cannot admit a local plateau}, i.e.\ a ball where the function is {locally} constant.}
\end{proposition}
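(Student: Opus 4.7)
My plan is to handle the two assertions separately, after translating to set $\xstar = \mathbf 0$ (a legitimate reduction, since the scaling equivalence in Definition~\ref{def:sif1} is stated entirely in terms of the centred variables $\x-\xstar$). For the claim on strict local optima I argue by contradiction: suppose $\x_0 \neq \mathbf 0$ is a strict local minimum (the local-maximum case is symmetric). Because $\x_0 \neq \mathbf 0$, for every $\rho > 1$ sufficiently close to $1$ both $\rho\x_0$ and $\x_0/\rho$ lie in the ball on which $\x_0$ strictly dominates. The strict local-minimum property gives $f(\x_0) < f(\rho\x_0)$; applying the scaling identity with multiplier $1/\rho$ to the pair $(\x_0,\rho\x_0)$ converts this into $f(\x_0/\rho) < f(\x_0)$, which contradicts strict local-minimality at the distinct neighbour $\x_0/\rho$.

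For the dichotomy on a line, I parametrize a line through $\mathbf 0$ as $\{t\mathbf v : t \in \R\}$ and define $h(t) = f(t\mathbf v)$, so that $h(0) = f(\xstar)$. Scaling invariance of $f$ descends to the one-dimensional equivalence $h(\rho t_1) \le h(\rho t_2) \iff h(t_1) \le h(t_2)$ for every $\rho > 0$. Assume $h$ admits a plateau, that is, an open interval $(a,b)$ with $h \equiv c$, and split on whether $0 \in (a,b)$. If $0 \in (a,b)$, then automatically $c = h(0) = f(\xstar)$, and picking $t_+ \in (0,b)$ and $t_- \in (a,0)$ the equalities $h(t_\pm) = h(0)$ combined with the scaling equivalence yield $h(\rho t_\pm) = h(0)$ for every $\rho > 0$; sweeping $\rho$ over $(0,\infty)$ shows that $\rho t_+$ covers $(0,\infty)$ and $\rho t_-$ covers $(-\infty,0)$, so $h \equiv f(\xstar)$ on the whole line.

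The remaining case, $0 \notin (a,b)$ with (without loss of generality) $0 < a < b$, is the technical heart of the proof. The scaling equivalence applied to pairs in the plateau shows that on each scaled interval $(\rho a, \rho b)$ the function $h$ is constant, and two such intervals $(\rho a,\rho b)$ and $(\rho' a,\rho' b)$ overlap whenever $a/b < \rho/\rho' < b/a$, which forces their constants to agree; connectedness of $\bigcup_{\rho>0}(\rho a,\rho b)=(0,\infty)$ therefore promotes the plateau to the whole positive ray with the same constant $c$. The main obstacle is then to identify $c$ with $f(\xstar) = h(0)$ and to extend the constancy across $0$; this closure step is immediate under continuity of $h$ at $0$ (then $c = \lim_{t\to 0^+}h(t) = h(0) = f(\xstar)$, after which the scaling equivalence restricted to the negative half-axis, applied to a pair containing $0$, finishes the proof), and is the only place where whatever regularity is implicit in the authors' notion of a scaling-invariant function has to be invoked.
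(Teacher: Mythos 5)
Your argument for the first claim is correct and is essentially the paper's own proof: the paper treats a strict local maximum and scales the offending collinear neighbour back into the ball as $\x_{0}/\theta$, you treat a strict local minimum and play $\rho\x_{0}$ against $\x_{0}/\rho$ for $\rho$ close to $1$; the mechanism (the scaling equivalence, in its strict form, transports a strict inequality between $\x_{0}$ and a collinear point of the ball into the reversed strict inequality at another point of the same ball) is identical.

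For the plateau dichotomy your route differs from the paper's and is in fact the more careful one. The paper takes a point at the extremity of the plateau together with an interior point, scales so that the first leaves the plateau while the second stays, and declares a contradiction; this tacitly assumes both that leaving the (maximal) plateau changes the function value and that the extremity can be displaced by a scaling at all. Your propagation argument --- covering $(0,\infty)$ by pairwise overlapping scaled copies of the plateau --- is rigorous and shows that any plateau not containing $0$ forces constancy on the whole open ray; your treatment of a plateau containing $0$ is complete. The gap you flag at the end is genuine and cannot be closed from Definition~\ref{def:sif1} alone: the function $h$ with $h(0)=0$, $h(t)=1$ for $t>0$ and $h(t)=2$ for $t<0$ satisfies $h(\rho t)=h(t)$ for every $\rho>0$, hence is scaling-invariant with respect to $\xstar=0$, admits the plateau $(1,2)$, yet is not constant equal to $h(0)$. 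The paper's own argument founders silently on exactly this configuration: once the plateau has propagated to the open ray $(0,\infty)$, its only finite extremity is the origin, which is fixed by every scaling, so the step ``scale $\x$ to be outside the plateau'' is unavailable. So you have not overlooked an idea that the paper supplies; you have located the precise point at which the statement needs an additional regularity hypothesis (for instance continuity of $f$ at $\xstar$ along the line, under which your closure step --- identifying the ray's constant with $f(\xstar)$ and then crossing the origin --- does finish the proof).
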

\begin{proof}
We can assume w.l.o.g.\ scaling-invariance with respect to $\xstar = 0$. Assume to get a contradiction that $f$ admits a strict local maximum different from $\xstar$, i.e.\ there exist $\x_{0}$ and $\epsilon > 0$ such that for all $\x \in B(\x_{0},\epsilon)$ (open ball of center $\x_{0}$ and radius $\epsilon$), $f(\x) < f(\x_{0})$. 
We now consider a point $\x_{1}$ belonging to  $B(\x_{0},\epsilon)$ and to the line $(0,\x_{0})$ such that $\|\x_{1}  \| > \| \x_{0}  \|$. Then $f(\x_{1}) < f( \x_{0})$ as $\x_{0}$ is a strict local maximum and $\x_{1}$ can be written $\x_{1} = \theta \x_{0}$ with $\theta >1$ as $\x_{1} \in (0,\x_{0})$ and has a larger norm than $\x_{0}$. Hence
$
f(\x_{0}) > f(\x_{1}) = f(\theta \x_{0}) 
$
which is by the scaling-invariance property equivalent to
$
f(\x_{0} / \theta) >  f( \x_{0})
$. However, $\x_{0} / \theta \in B( \x_{0}, \epsilon)$ as $\|  \x_{0}/\theta - \x_{0}\| = |1 - 1/\theta| \| \x_{0}\| = (\theta -1 ) \| \x_{0} \| / \theta = \| \x_{1} - \x_{0}\| / \theta < \epsilon / \theta < \epsilon$. Then we have found a point $\x_{0} / \theta \in B( \x_{0}, \epsilon)$ that has a function value strictly larger than $f(\x_{0}) $ which contradicts the fact that $\x_{0}$ is a strict local maximum.
The same reasoning holds to prove that the function has no strict local minimum.

The fact that the function is constant on a line crossing $\xstar$ or cannot admit a local plateau, comes from the fact that if the function is non-constant on a line and admits a local plateau, then we can find two points from the plateau $\x$ and $\y$ with equal function value such that the point $\x$ is at the extremity of the local plateau, then we just scale $\x$ and $\y$ such that $\x$ is outside the plateau and $\y$ stays on the plateau. By the scaling invariant property, the scaled points should still have an equal function value which is impossible as we have scaled $\x$ to be outside the plateau. 
\end{proof}

Examples of scaling-invariant functions include linear functions or composite of norm functions by functions in $\Monotone$, i.e.\  $f(\x)=g( \| \x \|)$ where $\| . \|$ is a norm on $\R^\dim$ and $g \in \Monotone$. Thus the famous sphere function $f(\x)=\sum_{i=1}^\dim \x_{i}^{2}$ which is the square of the Euclidian norm or more generally any convex quadratic function $f(\x) = (\x-\xstar)^{T} {\mathit \bf H} (\x - \xstar)$ with ${\mathit \bf H} \in \R^{\dim \times \dim}$  positive definite symmetric are scaling-invariant functions with respect to $\xstar$. 
The sublevel sets defined as the sets $\{ \x \in \R^\dim, f(\x) \leq c \}$ for $c \in \R$ (and $c \geq \inf f$) for those previous examples are convex sets, i.e.\ the functions are \emph{quasi-convex}. However, functions with non-convex sublevel sets can also be scaling-invariant (see Figure~\ref{fig:scalinginvariant}).

\mathnote{Not all unimodal functions are scaling-invariant, consider a unimodal function with level sets that become flatter and flatter. While for scaling invariant functions, the level sets are ``self-similar''.}

\begin{figure}
\centering
\includegraphics[height=0.215\textwidth]{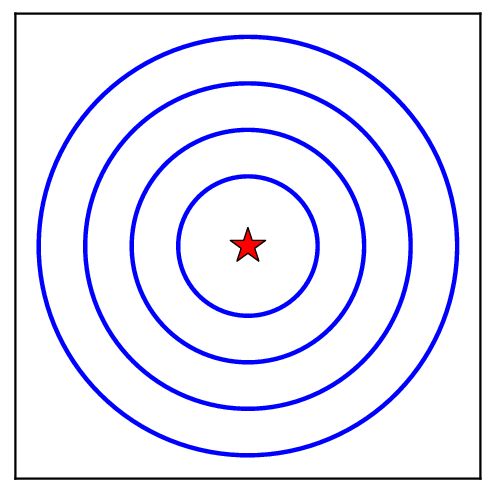}
\includegraphics[height=0.215\textwidth]{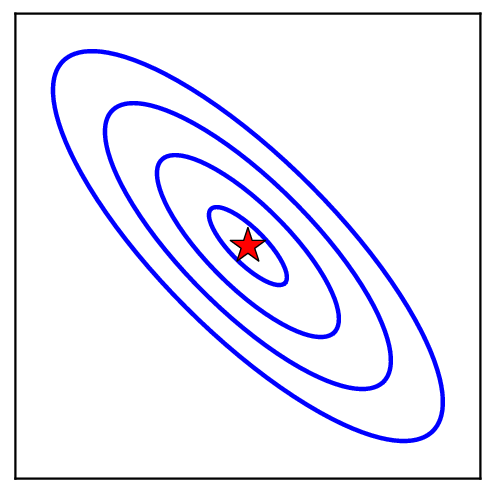}
\includegraphics[height=0.215\textwidth]{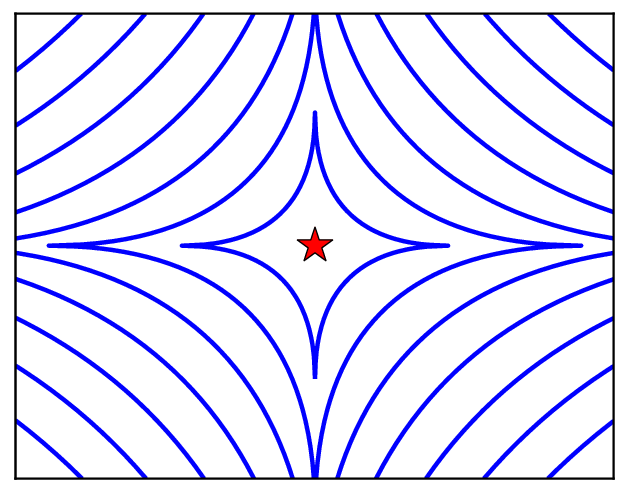}
\includegraphics[height=0.215\textwidth]{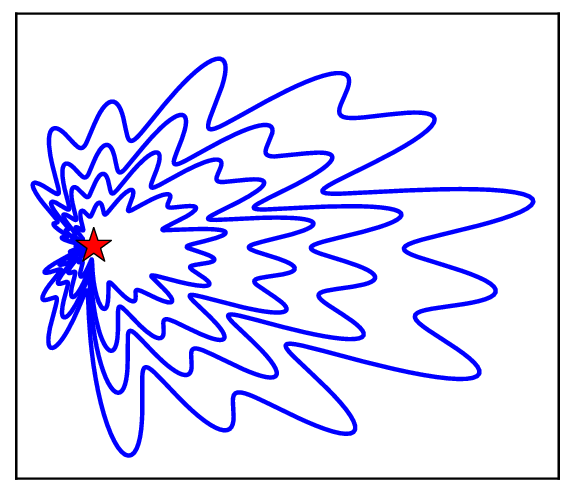}
\caption{\label{fig:scalinginvariant} Illustration of scaling-invariant functions w.r.t.\ the point $\xstar$, depicted with a star. The four functions are composite of $g \in \Monotone$ by $f(\x - \xstar)$ where $f$ is a positively homogeneous function (see Definition~\ref{def:poshom}). From left to right: $f(\x) = \| \x - \xstar \|$; $f(\x) = (\x - \xstar)^{T} \A (\x- \xstar) $ for $\A$ symmetric positive definite; $f(\x)=\left(\sum_i \x_i^{1/2}\right)^2$ the $1/2$-norm; randomly generated scaling-invariant function from a ``smoothly" randomly perturbed sphere function. The two functions on the left have convex sub-level sets contrary to those on the right.}
\end{figure}
A particular class of scaling-invariant functions are positively homogeneous functions whose definition is reminded below.
\begin{definition}[Positively homogeneous functions]\label{def:poshom}
A function $f:\R^\dim \mapsto \R$ is said positively  homogeneous with degree $\alpha$ if
for all $\scaleSI >0$ and for all $\x \in \R^\dim$, $f(\scaleSI \x) = \scaleSI^{\alpha} f(\x)$. 
\end{definition}
From this definition it follows that if a function $\hat f$ is positively homogeneous with degree $\alpha$ then $\hat f(\x - \xstar)$ is scaling-invariant with respect to $\xstar$ for any $\xstar \in \R^{n}$.
Remark that positive homogeneity is not always preserved if $f$ is composed by a strictly increasing transformation.

Examples of positively homogeneous functions are linear functions that are positively homogeneous functions with degree $1$. Also, every function deriving from a norm is positively homogeneous with degree $1$. Examples of scaling-invariant functions deriving from positively homogenous functions are depicted in Figure~\ref{fig:scalinginvariant}. 

In the paper \cite{companion-oneplusone}, stability of the normalized Markov chain is studied on functions $h = g \circ f$ where $f$ is positive homogeneous and $g \in \Monotone$.

\newcommand{\BB}{\mathbf{B}}
\newcommand{\Surf}{\mathbf{\mathcal{S}}}

\section{Joint Markov Chains on Scaling-Invariant Functions}\label{sec:jointMC}

We consider \acprs\ algorithms that are translation invariant and scale-invariant satisfying the properties \eqref{eq:SIsol}, \eqref{eq:SIG1} and \eqref{eq:SIG2} in Proposition~\ref{lem:scaleinvariant}. The functions considered are scaling-invariant. We prove under those conditions that $(\Xt-\xstar)/\st$ is a homogeneous Markov chain. 
\begin{proposition}\label{prop:MConscalinginvariant}
\todo{comparison based: $g \circ f$.}
    Consider a scaling-invariant (w.r.t. $\xstar$) objective function $f$ optimized by $(\Sol,(\GG_{1},\GG_{2}),\Uspace,p_{\U})$, a \acprs\ algorithm assumed to be translation-invariant and scale-invariant satisfying \eqref{eq:SIsol}, \eqref{eq:SIG1} and \eqref{eq:SIG2}. Let $(\Xt,\st)_{t \in \NNN}$ be the Markov chain associated to this \acprs\ and let $(\Ut)_{t \in \Nplus}$ be the i.i.d.\ sequence of random vectors on \Uspace, distributed according to $p_\U$ used for the construction of the Markov chain. Let $\Zt = \frac{\Xt-\xstar}{\st}$ for all $t \in \NNN$. Then $(\Zt)_{t \in \NNN}$ is a homogeneous Markov chain that can be defined independently of $(\Xt,\st)$, provided $\Z_{0} = (\X_{0}-\xstar)/\sigma_{0}$ via
\begin{align}
\Ztt^{i} & = \Sol( (\Zt,1),\Utt^{i}), i=1,\ldots,p \\
\Perm & = \nnnew{\SSel(f(\Ztt^{1}+\xstar),\ldots,f(\Ztt^{p}+\xstar))}\\
\Ztt & = \G(\Zt,\Perm*\Utt) 
\end{align}
where the function $\G$ equals for all $\z \in \R^{n}$ and $\y \in \Uspace$
\begin{equation}
G(\z,\y) = \frac{\GG_{1}((\z,1),\y)}{\GG_{2}(1,\y)}
 \enspace .
\end{equation}
\end{proposition}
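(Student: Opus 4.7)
The plan is to unwind the defining recursion of the CB-SARS one step at a time, using the scale-invariance identities \eqref{eq:SIsol}, \eqref{eq:SIG1}, \eqref{eq:SIG2} with the specific scaling factor $\alpha=\sigma_t$, and the scaling-invariance of $f$ (with $\xstar=0$, which is where translation invariance was implicitly used to reduce to) to show that the selection permutation only depends on $Z_t$ and $U_t$. Once every object appearing in the update of $Z_{t+1}$ has been rewritten as a function of $(Z_t, U_t)$ alone, the i.i.d.\ property of $(\Ut)_{t\in\NNN}$ gives the homogeneous Markov chain property automatically.

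First, I would apply \eqref{eq:SIsol} with $\alpha=\st$ to the candidate solutions, obtaining
\[
\Xt^{i} \;=\; \Sol((\Xt,\st),\Ut^{i}) \;=\; \st\,\Sol\!\left(\left(\tfrac{\Xt}{\st},1\right),\Ut^{i}\right) \;=\; \st\,\Sol((\Zt,1),\Ut^{i}) \;=:\; \st\,\Zt^{i}.
\]
Then I would use the scaling-invariance of $f$ (Definition~\ref{def:sif1} with $\xstar=0$ and scale $\st$) applied pairwise to the $\Xt^{i}$: for all $i,j$, $f(\st \Zt^{i})\le f(\st \Zt^{j})\iff f(\Zt^{i})\le f(\Zt^{j})$. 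Hence the ordering function returns the same permutation,
\[
\Perm \;=\; \SSel(f(\Xt^{1}),\ldots,f(\Xt^{p})) \;=\; \SSel(f(\Zt^{1}),\ldots,f(\Zt^{p})),
\]
so $\Perm$ is a measurable function of $(\Zt,\Ut)$ only.

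Next I would apply the scale-invariance identities of $\GG_1$ and $\GG_2$ with $\alpha=\st$. From \eqref{eq:SIG1},
\[
\Xtt \;=\; \GG_{1}((\Xt,\st),\Perm*\Ut) \;=\; \st\,\GG_{1}((\Zt,1),\Perm*\Ut),
\]
and from \eqref{eq:SIG2},
\[
\stt \;=\; \GG_{2}(\st,\Perm*\Ut) \;=\; \st\,\GG_{2}(1,\Perm*\Ut).
\]
The factor $\stt$ is strictly positive because $\GG_2$ takes values in $\Rplusstar$, so dividing yields
\[
\Ztt \;=\; \frac{\Xtt}{\stt} \;=\; \frac{\GG_{1}((\Zt,1),\Perm*\Ut)}{\GG_{2}(1,\Perm*\Ut)} \;=\; \G(\Zt,\Perm*\Ut),
\]
which is exactly the claimed update, with $\Perm$ itself determined by $(\Zt,\Ut)$ as established above.

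Finally, combining the two steps, $\Ztt$ is a measurable function of $\Zt$ and $\Ut$ alone. Since $(\Ut)_{t\in\NNN}$ is i.i.d., $(\Zt)_{t\in\NNN}$ is a time-homogeneous Markov chain on $\R^{n}$, and its law is entirely determined by $\Z_0=\X_0/\sigma_0$ together with the common distribution $p_{\U}$, independently of the scale sequence $(\st)$. The only mild subtlety worth verifying carefully is that the scale-invariance identities, stated for arbitrary $\alpha>0$, can be invoked almost surely at the random value $\alpha=\st(\omega)$; this is immediate because the identities hold \emph{pointwise} in $\alpha$, so no measurability gymnastics are needed. There is no real obstacle here—the argument is essentially algebraic bookkeeping of the scale-invariance relations.
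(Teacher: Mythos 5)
Your proof is correct and follows essentially the same route as the paper's own argument: apply \eqref{eq:SIsol} with $\alpha=\st$ and the scaling-invariance of $f$ to show the selection permutation computed from $(\Xt,\st)$ coincides with the one computed from $(\Zt,1)$, then use \eqref{eq:SIG1} and \eqref{eq:SIG2} to cancel the factor $\st$ in the ratio $\Xtt/\stt$. Your added remarks (positivity of $\GG_2$, the pointwise-in-$\alpha$ justification for substituting the random value $\st$) are harmless refinements of the same argument.
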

This proposition states that the normalized homogeneous Markov chain $(\Zt)_{t \in \NNN}$ is generated independently of $(\Xt,\st)_{t \in \NNN}$ provided the initial condition $\Z_{0} = (\X_{0}-\xstar)/\sigma_{0}$ holds, by (i) sampling candidate solutions with the $\Sol$ function starting from $(\Zt,1)$ (i.e.\ with step-size $1$) (ii) ordering the candidate solutions on $f(.+ \xstar)$ (iii) using the ranking of the candidate solutions to compute $\Ztt$ as the ratio of $\GG_{1}((\Zt,1),\Perm*\Utt)$ (i.e.\ the mean update equation but with step-size $1$ and starting from $\Zt$) divided by the multiplicative update for the step-size taken for a step-size equal to $1$.
\begin{remark}
The previous proposition assumes that scale-invariance is satisfied via the conditions specified in Propositions~\ref{lem:scaleinvariant}. We believe however that when a \acprs\ is scale-invariant under different conditions, a normalized homogeneous Markov chain can be found. For instance when the parametrization $(\Xt,v_t)=(\Xt,\st^{2})$ is used (see discussion around \eqref{eq:morphism2}) the normalized Markov chain is $(\Xt- \xstar)/\sqrt{v_t}$.
\end{remark}
\begin{proof}(of Proposition~\ref{prop:MConscalinginvariant})
We start to prove that the same ordering permutation $\Perm$ is extracted when ranking the candidate solutions $\Xtt^i= \Sol((\Xt,\st),\Utt^i)$ for $i=1,\ldots,p$ on $f$ than ranking the candidate solutions $\Ztt^i = \Sol((\Zt,1),\Utt^i)$ on $ f( . + \xstar)$ assuming $\Zt = (\Xt - \xstar)/\sigma_t$.
We remark first that 
\begin{multline}
f( \Sol((\Xt,\st),\Utt^i)) =  f(\Sol((\Xt - \xstar,\st),\Utt^i) + \xstar) =  \\ f \left(\st \Sol\left(\left(\frac{\Xt - \xstar}{\st},1\right),\Utt^i\right) + \xstar \right) 
=  f \left(\st \Sol\left(\left(\Zt,1\right),\Utt^i\right) + \xstar \right) 
\end{multline}
where we have used successively the translation and scale invariance property of $\Sol$  (see \eqref{eq:condTISol} and \eqref{eq:SIsol}). Given that $f$ is scaling-invariant, the ranking of $$ \{ f \left(\st \Sol\left(\left(\Zt,1\right),\Utt^i\right) + \xstar \right) \}_{1 \leq i \leq p}$$ is the same as the ranking of $\{f(\Sol\left(\left(\Zt,1\right),\Utt^i\right) + \xstar ) \}_{1 \leq i \leq \lambda}$. Hence we have proven that the same permutation ordering is extracted when ranking $\Xtt^i$ on $f(.)$ than ranking $\Ztt^i$ on $f(.+\xstar)$. The following then holds
$$
\Ztt = \frac{\Xtt - \xstar}{\st} = \frac{\GG_1((\Xt,\st),\Perm*\Utt) - \xstar}{\GG_2(\st,\Perm*\Utt)} = \frac{\GG_1((\Zt,1),\Perm*\Utt)}{\GG_2(1,\Perm*\Utt)}
$$
where we have successively used the scale-invariance of $\GG_1$ and $\GG_2$, properties \eqref{eq:SIG1} and \eqref{eq:SIG2}  and the translation invariance of $\GG_1$ \eqref{eq:condTIGG}.
\end{proof}

\generalnote{\begin{proof}(of Proposition~\ref{prop:MConscalinginvariant})
Consider a scaling-invariant function in zero, $f$. Candidate solutions sampled according to the $\Sol$ operator satisfy according to property \eqref{eq:SIsol} $\Sol((\x,\sigma),\uu^{i}) = \sigma \Sol((\x/\sigma,1),\uu^{i}) $. However in a \cprss, the permutation $\Perm$ results from ordering the  objective function of the candidate solutions, i.e.\ ordering $f(\Sol((\x,\sigma),\uu^{i}))$ which is the same as ordering $f( \sigma \Sol((\x/\sigma,1),\uu^{i}) )$ according to property~\eqref{eq:SIsol}. By the scaling-invariant property of the function $f$, we see that it is the same as ordering $f(\Sol((\x/\sigma,1),\uu^{i}) ) $. In other words, on a scaling-invariant function, $\Perm = \Perm^{f}_{(\Xt,\st)} = \Perm^{f}_{(\Xt/\st,1)}$ (putting the initial state as lower subscript).\\
Let $\Xt,\st,\Ut$ be given and let $\Zt=\Xt/\st$, then
$
\Ztt = \frac{\Xtt}{\stt} = \frac{\GG_{1}((\Xt,\st),\Perm*\Ut)}{\GG_{2}(\st,\Perm*\Ut)} \enspace.
$
Because of properties~\eqref{eq:SIG1} and \eqref{eq:SIG2},
$
\Ztt = \frac{\st  \GG_{1}((\Xt/\st,1),\Perm_{(\Xt,\st)}^{f}*\Ut)}{\st \GG_{2}(1,\Perm_{(\Xt,\st)}^{f}*\Ut)}$ and thus $\Ztt = \frac{  \GG_{1}((\Zt,1),\Perm_{(\Zt,1)}^{f}*\Ut)}{ \GG_{2}(1,\Perm_{(\Zt,1)}^{f}*\Ut)}
$. Since we have assume translation invariance of the algorithm, the same construction holds if $\xstar \neq 0$ with $\Zt = \frac{\Xt - \xstar}{\st}$.
\end{proof}
}

Because we assume scale-invariance via the properties of Proposition~\ref{lem:scaleinvariant}, the step-size update has a specific shape. Indeed \eqref{eq:SIG2} implies that
\begin{equation}
\stt = \st \GG_{2}(1,\Ytt)
\end{equation}
where $\Ytt = \Perm * \Utt$. Let us denote the multiplicative step-size update as $\etastar$, i.e.\ 
\begin{equation}\label{eq:defetastar}
\etastar(\Ytt) = \GG_{2}(1,\Ytt) \enspace.
\end{equation}
As explained in the proof of the previous proposition, for a scaling-invariant function $f$, the ranking permutation  is the same \new{on $ f(.)$} starting from $(\Xt,\st)$ or \new{on $f(.+\xstar)$ starting} from $(\Zt,1)$ such that we find that on scaling-invariant functions
\begin{equation}\label{eq:toutt}
\etastar\left(\Perm_{(\Xt,\st)}^{f(.)} * \Utt \right) = \etastar\left(\Perm_{(\Zt,1)}^{f(.+\xstar)} * \Utt \right)
\end{equation}
where $\Perm_{(\Xt,\st)}^{f(.)}$ is the permutation giving the ranking on $f$ starting from the state $(\Xt,\st)$ and $\Perm_{(\Zt,1)}^{f(.+\xstar)}$ the permutation giving the ranking on $f(.+\xstar)$ starting from $(\Zt,1)$.

\begin{remark}
Remark that the construction of the homogeneous Markov chain in the previous proposition only requires that the function is scaling-invariant. We do not assume here that the function has a unique global optimum. Hence the function could be the linear function $f(\x) = \x_{1}$.
\end{remark}\newline
It is immediate now to obtain the transition functions $\G$ associated to the different \cprs examples described in Section~\ref{sec:ex}:
\begin{align}
\G_{\dsa}(\z,\y)& =\frac{\z + \LRm \sum_{i=1}^{p} w_{i} \y^{i}}{\exp\left(\LRsigma \left( \frac{\sqrt{\mueff} \| \sum_{i=1}^{p} w_{i} \y^{i} \|}{E[\| \Normal \|]} -1 \right)  \right)} \\
\G_{\xNES}(\z,\y)& = \frac{\z + \LRm \sum_{i=1}^{p} w_{i} \y^{i}}{\exp\left(\frac{\LRsigma}{2 \dim} \left(   \sum_{i=1}^{p} w_{i}( \| \y^{i} \|^{2} - \dim) \right)  \right)}  \enspace,
\end{align}
where $\y \in \Uspace = \R^{n \times p} $. For the $(1,p)$-SA, $\y \in \Uspace=\R^{(n+1)\times p}$ and


\begin{equation}
\G_{\sa}(\z,\y) = \frac{\z + \exp( \tau [\y^{1}]_{n+1}) [\y^{1}]_{1 \ldots n}}{\exp( \tau [\y^{1}]_{n+1})}
\end{equation}
\new{and finally for the $(1+1)$-ES with generalized $1/5$ success rule,}
$\y$ in $\R^{n \times 2}$ \new{and}
\begin{equation}
\G_{\plusES_{\onefifth}}(\z,\y)= \frac{\z + \y^{1}}{\left( (\factonefifth - \factonefifth ^{-q}) 1_{\{ \y^{1} \neq 0 \}} + \factonefifth ^{-q}  \right)} \enspace .
\end{equation}

\nnote{
Property of the transition functions, no need to formally state as far as continuity is concerned refer that later on we will need the C1 almost everywhere.}

\nnote{Here we need to talk about stochastically recursive sequence. Check what we could get easily from the theory of stochastically monototonic sequence. See chapter 3 of the book ``Understanding MCMC'' by Roberts and Tweedie.}

\section{Sufficient Conditions for Linear Convergence of \acprs\ on Scaling-Invariant Functions}\label{sec:whystability}

We consider throughout this section that $(\Xt,\st)_{t \in \NNN}$ is a Markov chain resulting from a \acprs\ (as defined in Definition~\ref{def:SSAES}) that is translation invariant and scale-invariant satisfying the conditions of Proposition~\ref{lem:scaleinvariant}. The function optimized in this section is a scaling-invariant function $f$ in $\xstar=0$ (this can be assumed without loss of generality in order to simplify the notations). In this context, let $(\Zt = \frac{\Xt}{\st})_{t \in \NNN}$ be the homogeneous Markov chain defined in Proposition~\ref{prop:MConscalinginvariant}.

 For proving linear convergence, we investigate the log-progress $\ln \| \Xtt \|/\| \Xt\|$. The chains $(\Xt,\st)_{t \in \NNN}$ and $(\Zt)_{t \in \NNN}$ being connected by the relation $\Zt =\Xt / \st $, the log-progress  can be expressed as 
\begin{equation}\label{eq:josi}
\ln \frac{\| \Xtt \|}{\| \Xt\|} = \ln \frac{\| \Ztt \| \etastar(\Y({\Zt,\Utt}))}{ \| \Zt \|}
\end{equation}
where the ordered vector $\Perm_{(\Zt,1)} *\Utt$ is denoted $\Y({\Zt,\Utt})$ to signify its dependency in $\Zt$ and $\Utt$, i.e.\
\begin{equation}\label{eq:Yz}
\Y({\z,\uu)} = \Perm_{(\z,1)} * \uu = \OOrd(f(\Sol((\z,1),\uu^{i})_{i=1,\ldots,p})) * \uu  \enspace.
\end{equation}

For \eqref{eq:josi} we have used the fact that the step-size change starting from $(\Xt,\st)$ equals the step-size change starting from $(\Zt,1)=(\Xt/\st,1)$ (see \eqref{eq:toutt}).
Using the property of the logarithm, we express $\frac{1}{t} \ln  \frac{\| \Xt \|}{\| \X_{0} \|}$ as
\begin{align}\label{eq:LCsum}
\frac{1}{t} \ln \frac{\| \Xt \|}{\| \X_{0} \|}
& = \frac1t \sum_{k=0}^{t-1} \ln \frac{\| \X_{k+1} \|}{\| \X_{k}  \|} = \frac1t \sum_{k=0}^{t-1} \ln \frac{\| \Z_{k+1}\| }{\| \Z_{k} \|} \etastar(\Y({\Zt,\Utt})) \enspace.
\end{align}
Let us define for $\z \in \ZZ$, $\mathcal{R}(\z)$ the expectation of the logarithm of $\etastar(\Y({\z,\U}))$ for $\U \sim p_{\U}$, i.e.\
\begin{align}\label{eq:defR}
\mathcal{R}(\z) & = E[ \ln( \etastar( \Y(\z,\U)) ] \\ & =  \int \ln \left( \etastar \left( \SSel(f(\Sol((\z,1),\uu^{i}))_{i=1,\ldots,p}) \right) * \uu \right) p_{\U}(\uu) d \uu \enspace.
\end{align}

\paragraph{Linear convergence}
Almost sure linear convergence can be proven by exploiting \eqref{eq:LCsum} that suggests the application of a Law of Large Numbers (LLN) for Markov chains. Sufficient conditions for proving a LLN for Markov chains are $\varphi$-irreducibility, Harris recurrence and positivity whose definitions are briefly reviewed, see however Meyn and Tweedie for more background \cite{Tweedie:book1993}. 

Let $\Z=(\Zt)_{t \in \NNN}$ be a Markov chain  defined on a state space $\ZZ$ equipped with the Borel sigma-algebra $\B(\ZZ)$.  We denote $P^{t}(\z,A)$, $t \in \NNN$, $\z \in \ZZ$ and $A \in \B(\ZZ)$ the transition probabilities of the chain
\begin{equation}\label{eq:transition}
P^{t}(\z,A)=P_{\z}(\Zt \in A)
\end{equation}
where $P_{\z}$ and $E_{\z}$ denote the probability law and expectation of the chain under the initial condition $\Z_{0} = \z$. If a probability $ \mu$ on $(\ZZ,\B(\ZZ))$ is the initial distribution of the chain, the corresponding quantities are denoted $P_{\mu}$ and $E_{\mu}$. For $t=1$, the transition probability in \eq~\eqref{eq:transition} is denoted $P(\z,A)$.
The chain $\Z$ is $\varphi$-irreducible if there exists a non-zero measure $\varphi$ such that for all $A \in \B(\ZZ)$ with $\varphi(A)>0$, for all $\z_{0} \in \ZZ$, the chain started at $\z_{0}$ has a positive probability to hit $A$, that is there exists $t \in \Nplus$ such that 
$P^{t}(\z_{0},A) > 0 $.
A $\sigma$-finite measure $\pi$ on $\B(\ZZ)$ is said invariant if it satisfies
$$
\pi(A) = \int \pi(d\z) P(\z,A) , \, \, A \in \B(\ZZ) \enspace.
$$
If the chain $\Z$ is $\varphi$-irreducible and admits an invariant probability measure then it is called \emph{positive}. A small set is a set $C$ such that for some $\delta >0$ and $t >0$ and some non trivial probability measure $\nu_{t}$,
$$
P^{t}(\z, .) \geq \delta \nu_{t}(.) , \z \in C \enspace.
$$
The set $C$ is then called a $\nu_{t}$-small set.
Consider a small set $C$ satisfying the previous equation with $\nu_{t}(C) >0$ and denote $\nu_{t} = \nu$. The chain is called aperiodic if the g.c.d. of the set
$$
E_{C} = \{ k \geq 1: C \text{ is a } \nu_{k} \mbox{-small set with } \nu_{k} = \alpha_{k} \nu \mbox{ for some } \alpha_{k} >0 \}
$$
is one for some (and then for every) small set $C$.

A $\varphi$-irreducible Markov chain is \emph{Harris-recurrent} if for all $A \subset \ZZ$ with $\varphi(A) > 0$, and for all $\z \in \ZZ$, the chain will eventually reach $A$ with probability $1$ starting from $\z$, formally if $P_{\z}(\eta_{A} = \infty) = 1$ where $\eta_{A}$ be the \emph{occupation time} of $A$, i.e.\ $\eta_{A} = \sum_{t=1}^{\infty} 1_{\Zt \in A}$. An (Harris-)recurrent chain admits an unique (up to a constant multiple) invariant measure \cite[Theorem~10.0.4]{Tweedie:book1993}.

Typical sufficient conditions for a Law of Large Numbers to hold are $\varphi$-irreducibility, positivity and Harris-recurrence:

\begin{theorem}\label{theo:LLN}[Theorem 17.0.1 in \cite{Tweedie:book1993}]
Assume that $\Z$ is a positive Harris-recurrent chain with invariant probability $\pi$. Then the LLN holds for any $g$ with $\pi(|g|)= \int |g(\x)| \pi(d\x) < \infty$, that is for any initial state $\Z_{0}$,
$
\lim_{t \to \infty} \frac{1}{t} \sum_{k=0}^{t-1} g(\Z_{k}) = \pi(g) \,\, a.s.
$
\end{theorem}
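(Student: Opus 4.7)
Since the statement is the classical Markov-chain strong law of large numbers (SLLN) recalled from Meyn and Tweedie, my strategy is to reduce it to the i.i.d.\ SLLN via a regenerative decomposition of the path of $\Z$.

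The first step is to construct a regenerative structure. From $\varphi$-irreducibility I would invoke the existence of a small set and apply Nummelin's splitting to build an augmented chain $\check{\Z}$ on $\ZZ\times\{0,1\}$ that admits a genuine atom $\check{\alpha}$ while preserving both the marginal law of $\Z$ and the invariant probability $\pi$. Harris recurrence transfers to $\check{\Z}$ and guarantees that $\check{\alpha}$ is visited infinitely often almost surely, from any initial state.

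Next I would introduce the successive return times $0 = T_0 < T_1 < T_2 < \cdots$ to $\check{\alpha}$ and, by the strong Markov property, identify the excursions between consecutive visits as i.i.d.\ blocks. Define
\[
S_k = \sum_{n=T_k}^{T_{k+1}-1} g(\Z_n), \qquad L_k = T_{k+1} - T_k.
\]
The Kac decomposition of the invariant measure gives $\pi(g) = E_{\check{\alpha}}[S_0]/E_{\check{\alpha}}[L_0]$, so the assumption $\pi(|g|)<\infty$ translates into $E_{\check{\alpha}}[|S_0|]<\infty$, while positivity yields $E_{\check{\alpha}}[L_0]<\infty$. Applying the classical i.i.d.\ SLLN to $(S_k)_k$ and $(L_k)_k$, and letting $N(t)=\max\{k:T_k\le t\}$ denote the renewal counting process, one has $N(t)/t\to 1/E_{\check{\alpha}}[L_0]$ a.s.\ by elementary renewal theory. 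Writing
\[
\frac{1}{t}\sum_{n=0}^{t-1} g(\Z_n) = \frac{N(t)}{t}\cdot\frac{1}{N(t)}\sum_{k=0}^{N(t)-1} S_k + \frac{R_t}{t},
\]
where $R_t$ collects the contributions of the pre-$T_0$ segment and the incomplete excursion straddling $t$, the first product converges a.s.\ to $\pi(g)$ by composition of limits, and the remainder $R_t/t$ vanishes a.s.\ by the integrability of $|S_0|$ together with a Borel--Cantelli argument on the excursion lengths.

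The main technical obstacles are twofold. First, making the splitting construction rigorous: if $C$ is only an $m$-small set for some $m>1$, one has to either pass to the $m$-skeleton or invoke aperiodicity to reduce to $m=1$, and verify that Harris recurrence and the invariant measure lift properly to the split chain. Second, removing the special role of the starting condition $\Z_0 \in \check{\alpha}$ and obtaining the conclusion for arbitrary initial states $\Z_0=\z$: this again rests on Harris recurrence, which ensures that the hitting time of $\check{\alpha}$ is almost surely finite from any $\z$, so that the pre-$T_0$ segment has a.s.\ finite length and is negligible in the Cesaro average, while the post-$T_0$ dynamics inherit exactly the atomic distribution.
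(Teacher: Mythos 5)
This statement is not proved in the paper at all: it is quoted verbatim from Meyn and Tweedie \cite{Tweedie:book1993} (Theorem 17.0.1) and used as a black box, so there is no in-paper argument to compare against. Your regenerative sketch---Nummelin splitting to manufacture an atom, Kac's formula $\pi(g)=E_{\check\alpha}[S_0]/E_{\check\alpha}[L_0]$, the i.i.d.\ SLLN on excursion blocks, renewal asymptotics for $N(t)/t$, and Harris recurrence to dispose of the pre-$T_0$ segment for an arbitrary initial state---is exactly the proof given in the cited reference and is correct in outline; the one small slip is that aperiodicity is not a hypothesis of the theorem, so it cannot be ``invoked'' to reduce an $m$-small set to $m=1$, but the alternative you name (working through the $m$-skeleton/resolvent) is the route actually taken in \cite{Tweedie:book1993}.
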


This theorem allows to state sufficient conditions for the almost sure linear convergence of scale-invariant \acprs\ satisfying the assumptions of Proposition~\ref{prop:MConscalinginvariant}. However, before stating those sufficient conditions, let us remark that as a consequence of \eqref{eq:josi}, assuming positivity of $\Z$ and denoting $\pi$ its invariant probability measure, and assuming that (i) $\Z_{0} \sim \pi$, (ii)  $\int \ln \| \z \| \pi(d\z) < \infty$ and (iii) $\int \mathcal{R}(\z) \pi ( d \z)  < \infty$, then for all $t \geq 0$
\begin{equation}
E_{\pi} \left[ \ln \frac{\| \Xtt \| }{\| \Xt \|} \right] = \int E_{\U \sim p_{\U}}[\ln (\etastar(\Y(\z,\U)))] \pi(d\z) = \int \mathcal{R}(\z) \pi( d \z)  \enspace.
\end{equation}
We define the convergence rate $\CR$ as the opposite of the RHS of the previous equation, i.e.\
\begin{equation}\label{eq:ConvRate}
\CR = - \int E_{\U \sim p_{\U}}[\ln (\etastar(\Y(\z,\U)))] \pi(d\z) = -  \int \mathcal{R}(\z) \pi( d \z) \enspace . 
\end{equation}
We now state sufficient conditions such that  linear convergence at the rate $\CR$ holds almost surely independently of the initial state.
\begin{theorem}[Almost sure linear convergence]\label{theo:asCV}
Let $(\Xt,\st)_{t \in \NNN}$ be the recursive sequence generated by a translation and scale-invariant \acprs\ satisfying the assumptions of Proposition~\ref{prop:MConscalinginvariant} and optimizing a scaling-invariant function where w.l.o.g.\ $\xstar=0$. Let $(\Zt)_{t \in \NNN}$ be the homogeneous Markov chain defined in Proposition~\ref{prop:MConscalinginvariant}.
Assume that $(\Zt)_{t \in \NNN}$ is Harris-recurrent and positive with invariant probability measure $\pi$, that $E_{\pi} \ln \| \z \| <  \infty$ and $E_{\pi} \mathcal{R}(\z) d \z < \infty$. Then for all $\X_{0}$, for all $\sigma_{0}$, linear convergence holds asymptotically almost surely, i.e.\
$$
\lim_{t \to \infty} \frac{1}{t} \ln \frac{\| \Xt \|}{\| \X_{0} \|} = - \CR \mbox{ and }  \lim_{t \to \infty} \frac{1}{t} \ln \frac{\st }{ \sigma_{0} }   = - \CR  \enspace a.s.
$$
\end{theorem}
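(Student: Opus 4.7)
The plan is to use the decomposition
$$\frac{1}{t}\ln\frac{\|\Xt\|}{\|\X_0\|} = \frac{1}{t}\ln\frac{\st}{\sigma_0} + \frac{1}{t}\ln\frac{\|\Zt\|}{\|\Z_0\|},$$
which follows from $\Xt = \st\,\Zt$, and to prove separately that the step-size term converges almost surely to $-\CR$ and that the $\Zt$-term vanishes almost surely. Both identities in the theorem then follow: the step-size limit directly, and the $\Xt$ limit as the sum of the two.

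For the step-size contribution I exploit the multiplicative step-size update forced by scale-invariance: combining \eqref{eq:SIG2} with the definition \eqref{eq:defetastar} gives $\stt = \st\,\etastar(\Perm*\Ut)$, and on a scaling-invariant function \eqref{eq:toutt} shows that this equals $\st\,\etastar(\Y(\Zt,\Ut))$. Telescoping yields
$$\frac{1}{t}\ln\frac{\st}{\sigma_0} = \frac{1}{t}\sum_{k=0}^{t-1}\ln\etastar(\Y(\Z_k,\U_k)).$$
To apply Theorem~\ref{theo:LLN} I view $(\Z_k,\U_k)_{k\in\NNN}$ as a time-homogeneous Markov chain on $\ZZ\times\Uspace$: because the $\U_k$ are i.i.d.\ with law $p_{\U}$ and independent of the past, Harris-recurrence, positivity, and the invariant probability $\pi\otimes p_{\U}$ transfer from $(\Zt)$ to the joint chain. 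Applied to $g(z,u)=\ln\etastar(\Y(z,u))$, whose $\pi\otimes p_{\U}$-integral equals $\int\mathcal{R}(z)\pi(dz) = -\CR$ by \eqref{eq:defR} and Fubini, the LLN yields the desired a.s.\ limit, independently of the initial state.

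For the $\Zt$-term I apply Theorem~\ref{theo:LLN} again to $g(\z)=\ln\|\z\|$, which is $\pi$-integrable by hypothesis. Setting $\bar a_t = \frac{1}{t}\sum_{k=0}^{t-1}\ln\|\Z_k\|$, the LLN gives $\bar a_t \to E_\pi[\ln\|\z\|]$ a.s., and the elementary identity
$$\ln\|\Z_t\| = (t+1)\bar a_{t+1} - t\bar a_t$$
then forces $\frac{1}{t}\ln\|\Z_t\|\to 0$ a.s., so that $\frac{1}{t}\ln(\|\Z_t\|/\|\Z_0\|)\to 0$ a.s. Combining with the step-size limit produces both convergence statements.

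The main technical subtlety is the transfer of the LLN from the $\Zt$-chain to the noise-dependent functional $\ln\etastar(\Y(\Z_k,\U_k))$, since Theorem~\ref{theo:LLN} is stated for functions of the chain alone. The cleanest route is the joint-chain construction sketched above, which requires making precise that Harris-recurrence, positivity, and the product invariant measure are inherited from $(\Zt)$ under the i.i.d.\ noise structure; this is essentially automatic but deserves careful checking. A self-contained alternative decomposes $\ln\etastar(\Y(\Z_k,\U_k))=\mathcal{R}(\Z_k)+\Delta_k$, where $(\Delta_k)$ is a martingale-difference sequence relative to the natural filtration, applies the LLN directly to $\mathcal{R}(\Z_k)$, and handles the residual via a martingale strong law at the cost of a modest additional moment condition on the step-size multiplier.
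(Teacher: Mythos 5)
Your proof is correct and follows essentially the same route as the paper: the same telescoping decomposition into $\frac1t\ln(\st/\sigma_0)$ plus $\frac1t\ln(\|\Z_t\|/\|\Z_0\|)$ (the paper writes this as three Ces\`aro sums whose $\ln\|\Z_k\|$ parts cancel, which is your identity in disguise) and the same invocation of Theorem~\ref{theo:LLN}. The one place you go beyond the paper is the term $\frac1t\sum_k\ln\etastar(\Y(\Z_k,\U_k))$: the paper applies Theorem~\ref{theo:LLN} to it directly even though it is a function of $(\Z_k,\U_k)$ rather than of $\Z_k$ alone, whereas you justify this step via the joint chain on $\ZZ\times\Uspace$ with invariant measure $\pi\otimes p_{\U}$ (or the martingale-difference alternative) --- a point the paper's argument silently skips and your version makes rigorous.
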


{\em Proof.}
Using \eqref{eq:LCsum} we obtain
$$
\frac{1}{t} \ln \frac{\| \Xt \|}{\| \X_{0} \|} = \frac{1}{t} \sum_{k=0}^{t-1} \ln \| \Z_{k+1} \| - \frac1t \sum_{k=0}^{t-1} \ln \| \Z_{k} \| + \frac1t \sum_{k=0}^{t-1} \ln \etastar(\Y({\Z_k},\U_{k+1}))  \enspace .
$$
We then apply Theorem~\ref{theo:LLN} to each term of the RHS and find
\begin{align*}
\lim_{t \to \infty} \frac{1}{t} \ln \frac{\| \Xt \|}{\| \X_{0} \|} & = 
\int \ln \| \z \| \pi( d \z)
- \int \ln \| \z \| \pi( d \z) + 
\int E[\ln \etastar(\Y({\z},\U)] \pi( d \z) \\
& = 
\int E[\ln \etastar(\Y({\z},\U)] \pi( d \z) = - \CR
 \enspace .
\end{align*}
Similarly since $\frac1t \ln \frac{\sigma_{t}}{\sigma_{0}} = \frac1t \sum_{k=0}^{t-1} \ln \etastar(\Y({\Z_k},\U_{k+1}))$, by applying Theorem~\ref{theo:LLN}, then
$
\lim_{t \to \infty} \frac{1}{t} \ln \frac{\st }{ \sigma_{0} }   = - \CR  \enspace. 
$ \endproof
\mathnote{From MT page 236: ``Positive chains are often called ``positive recurrent'' to reinforce the fact that they are recurrent. Indeed positive chains are recurrent according to Prop 10.1.1.}

Positivity also guarantees convergence of $E_{\z}[ h(\Zt) ]$ from ``almost all'' initial state $\z$ provided $\pi(| h | ) < \infty$. More precisely from \cite[Theorem~14.0.1]{Tweedie:book1993} given a $\varphi$-irreducible and aperiodic chain $\Z$, for $h \geq 1$ a function on $\ZZ$, the following are equivalent:
(i) The chain $\Z$ is positive (recurrent)\footnote{Positive chains are recurrent according to Proposition 10.1.1 of \cite{Tweedie:book1993} but the term positive recurrent is used to reinforce in the terminology the fact that they are recurrent (see \cite{Tweedie:book1993} page 236).} with invariant probability measure $\pi$ and
$
\pi(h) : = \int \pi( d \z) h( \z ) < \infty \enspace.
$
(ii) There exists some petite set $C$ (\cite[Section~5.5.2]{Tweedie:book1993}) and some extended-valued non-negative function $V$ satisfying $V(\z_{0}) < \infty$ for some $\z_{0}$, and
\begin{equation}\label{eq:drift-fnorm}
\Delta V (\z) \leq - h( \z) + b 1_{C}(\z), \enspace \z \in \ZZ,
\end{equation}
where $\Delta$ is the drift operator defined as
\begin{equation}\label{eq:drift}
\Delta V(\z) = \int P(\z,d\y) V(\y) - V(\z) = E_{\z} \left[ V(\Z_{1}) - V(\Z_{0}) \right] \enspace.
\end{equation}
\mathnote{One consequence of this equivalence is a technique to prove the integrability w.r.t. stationary measure. I suspect that this is exactly what we need for the (1+1)-ES and that we somehow prove again by hand. However for the comma case it might be that I need more to prove the integrability of $|\ln \| \z \||$ close to zero as it is not dominated by the drift function - and therefore the need of the extra theorem found in one additional paper of Meyn or Tweedie. Note that I didn't check carefully what I claim in this note.}
Any of those two conditions imply that for any $\z$ in $S_{V}=\{ \z: V(\z) < \infty \}$ 
\begin{equation}\label{eq:cv-fnorm}
\| P^{t}(\z, .) - \pi \|_{h} \xrightarrow[t \to \infty]{} 0 \enspace,
\end{equation}
where $\| \nu \|_{h}:= \sup_{g: |g| \leq h} | \nu (g)|$. Typically the function $V$ will be finite everywhere such that the convergence in \eqref{eq:cv-fnorm} will hold without any restrictions on the initial condition. The conditions (i) or (ii) for the chain $\Z$ with $h(\z) = | \ln \| \z \|  | \nnew{+1} $ imply the convergence of the expected log-progress independently of the starting point $\z$ taken into $S_{V}=\{ \z: V(\z) < \infty \}$ where $V$ is the function such that \eqref{eq:drift-fnorm} is satisfied. More formally
\begin{theorem}[Linear convergence of the expected log-progress]\label{theo:LCexp}
Let $(\Xt,\st)_{t \in \NNN}$ be the recursive sequence generated by a translation and scaling-invariant \acprs\ algorithm satisfying the assumptions of Proposition~\ref{prop:MConscalinginvariant} optimizing a scaling-invariant function where w.l.o.g.\ $\xstar$ is zero. Let $(\Zt)_{t \in \NNN}$ be the homogeneous Markov chain defined in Proposition~\ref{prop:MConscalinginvariant}.
Assume that $(\Zt)_{t \in \NNN}$ is $\varphi$-irreducible and aperiodic and assume that either condition (i) or (ii) above are satisfied with $h(\z) = | \ln \| \z \| | \nnew{+1} $. Assume also that there exists $\beta \geq 1$ such that 
\begin{equation}
\y \mapsto \mathcal{R}(\y)= \int \ln \etastar(\Y({\y},\uu)) p_{\U}(\uu) d \uu \leq \beta (| \ln \| \y \| | \nnew{+1}) \enspace.
\end{equation}
Then for all initial condition $(\X_{0},\sigma_{0})=(\x,\sigma) $ such that $V(\x/\sigma) < \infty $ where $V$ satisfies \eqref{eq:drift-fnorm}
\begin{equation}\label{eq:cvExpe}
\lim_{t \to \infty} E_{\frac{\x}{\sigma}} \left[ \ln \frac{\|\Xtt\|}{\| \Xt \|} \right] = - \CR  \mbox{  and  }  \lim_{t \to \infty} E_{\frac{\x}{\sigma}} \left[ \ln \frac{\sigma_{t+1}}{\sigma_{t}} \right] = - \CR \enspace.
\end{equation}
\end{theorem}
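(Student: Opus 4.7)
\medskip
\noindent\textbf{Proof proposal.} The plan is to take the expectation of the decomposition~\eqref{eq:josi} and then invoke the $h$-norm ergodic convergence~\eqref{eq:cv-fnorm} term by term. From~\eqref{eq:josi} we write
$$
\ln\frac{\|\Xtt\|}{\|\Xt\|} \;=\; \ln\|\Ztt\| \;-\; \ln\|\Zt\| \;+\; \ln\etastar\bigl(\Y(\Zt,\Ut)\bigr).
$$
Since $(\Ut)_{t\in\NNN}$ is i.i.d.\ and $\Zt$ depends only on $(\Z_0,\U_0,\ldots,\U_{t-1})$, the variable $\Ut$ is independent of $\Zt$. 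Conditioning on $\Zt$ and using the definition of $\mathcal{R}$ in~\eqref{eq:defR} yields $E_{\x/\sigma}[\ln\etastar(\Y(\Zt,\Ut))]=E_{\x/\sigma}[\mathcal{R}(\Zt)]$, so that
$$
E_{\x/\sigma}\left[\ln\frac{\|\Xtt\|}{\|\Xt\|}\right] \;=\; E_{\x/\sigma}[\ln\|\Ztt\|] \;-\; E_{\x/\sigma}[\ln\|\Zt\|] \;+\; E_{\x/\sigma}[\mathcal{R}(\Zt)].
$$

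Next I would exploit condition (i) or (ii), which, combined with $\varphi$-irreducibility and aperiodicity, delivers the $h$-norm convergence~\eqref{eq:cv-fnorm}: for every measurable $g$ with $|g|\le h=|\ln\|\cdot\||+1$ and every initial $\z\in S_V=\{V<\infty\}$, one has $E_{\z}[g(\Zt)] = P^t(\z,g)\to \pi(g)$. I apply this twice. First with $g(\z)=\ln\|\z\|$, which trivially satisfies $|g|\le h$: both $E_{\x/\sigma}[\ln\|\Zt\|]$ and $E_{\x/\sigma}[\ln\|\Ztt\|]$ converge to the same finite limit $\pi(\ln\|\cdot\|)$ (finiteness follows from $\pi(h)<\infty$), so their difference vanishes. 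Second with $g=\mathcal{R}/\beta$, which satisfies $|g|\le h$ by the standing assumption $\mathcal{R}\le\beta h$ (read as $|\mathcal{R}|\le\beta h$, see the obstacle below), yielding $E_{\x/\sigma}[\mathcal{R}(\Zt)]\to\pi(\mathcal{R})=-\CR$. Summing the three limits produces the first equality in~\eqref{eq:cvExpe}.

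The step-size statement is an immediate corollary. From~\eqref{eq:G2} and~\eqref{eq:defetastar} together with the identity~\eqref{eq:toutt}, we have $\ln(\sigma_{t+1}/\sigma_t)=\ln\etastar(\Y(\Zt,\Ut))$. Taking expectation and conditioning on $\Zt$ as above gives exactly $E_{\x/\sigma}[\mathcal{R}(\Zt)]$, whose limit $-\CR$ has just been computed, so the second equality in~\eqref{eq:cvExpe} follows.

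The main subtlety I expect to have to justify is the control of $\mathcal{R}$ needed to apply the $h$-norm convergence: the stated hypothesis provides only the upper bound $\mathcal{R}(\y)\le\beta(|\ln\|\y\||+1)$, whereas invoking~\eqref{eq:cv-fnorm} on $\mathcal{R}/\beta$ requires the two-sided bound $|\mathcal{R}|\le\beta h$. The intended reading of the assumption is likely the two-sided inequality; if only the upper bound is available, then one has to split $\mathcal{R}=\mathcal{R}^+-\mathcal{R}^-$ and handle $\mathcal{R}^-$ separately, for example via uniform integrability of $\{\mathcal{R}^-(\Zt)\}_t$ or via algorithm-specific lower bounds on $\ln\etastar$. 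The latter route is available for instance for the $(1+1)$-ES of Section~\ref{sec:algo-plus}, for which $\etastar$ is deterministically bounded below by $\factonefifth^{-1/q}>0$ and hence $\ln\etastar$ is bounded from below, making $\mathcal{R}^-$ trivially $h$-dominated.
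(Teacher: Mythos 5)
Your proof is correct and follows essentially the same route as the paper's: decompose the log-progress via \eqref{eq:josi}, reduce the step-size term to $E_{\z}[\mathcal{R}(\Zt)]=\int P^{t}(\z,d\y)\mathcal{R}(\y)$, and apply the $h$-norm ergodic convergence \eqref{eq:cv-fnorm} to $\ln\|\cdot\|$ and to $\mathcal{R}$. The subtlety you flag about the one-sided bound on $\mathcal{R}$ is a fair observation and is in fact equally present in the paper's own argument, which bounds $\left|\int P^{t}(\z,d\y)\mathcal{R}(\y)-\pi(\mathcal{R})\right|$ by $\|P^{t}(\z,\cdot)-\pi\|_{\beta(|\ln\|\cdot\||+1)}$, a step that tacitly reads the hypothesis as the two-sided inequality $|\mathcal{R}|\le\beta(|\ln\|\cdot\||+1)$.
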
 

{\em Proof.}
{\bf Remark $\star$:} Note first that if \eqref{eq:drift-fnorm} is satisfied for a function $V$ for a given $h \geq 1$ then, for $\beta \geq 1$ the function $\beta V$ will satisfy \eqref{eq:drift-fnorm} for the function $\beta h$ such that \eqref{eq:cv-fnorm} will hold with $\beta h$.\\
Let us start by proving the RHS of \eqref{eq:cvExpe} (we set $\z = \x/\sigma$)
\begin{align*}
E_{\frac{\x}{\sigma}} \left[ \ln \frac{\stt}{\st} \right] & = E_{\z} \left[ \ln \etastar(\Y(\Zt,\Utt))  \right] \\
& = \int  P^{t}(\z, d \y) \int  \ln \etastar(\Y(\y,\uu)) p_{\U}(\uu) d \uu = \int P^{t}(\z, d \y) \mathcal{R}(\y) \enspace .
\end{align*}
Since $\mathcal{R} (\y) \leq \beta ( | \ln \| \y \| | + 1) $ and $|\ln \| \y \|  | + 1 $ satisfies either (i) or (ii) we know from the remark $\star$ that 
$
\lim_{t \to \infty} \|  P^{t}(\z, . ) - \pi \|_{\beta (\y \mapsto | \ln \| \y \| |+1)} = 0 
$.
Hence 
$$
| \int P^{t}(\z, d \y) \mathcal{R}(\y) - \underbrace{\int \mathcal{R}(\y) \pi ( d \y)}_{- \CR} | \leq \|  P^{t}(\z, . ) - \pi \|_{\beta (\y \mapsto | \ln \| \y \| |+1)}
$$
converges to $0$ when $t$ goes to $\infty$ that proves the right limit in \eqref{eq:cvExpe}. To prove the left limit in \eqref{eq:cvExpe}, let us write
\begin{align*}
E_{\frac{\x}{\sigma}} \left[ \ln \frac{\|\Xtt\|}{\| \Xt \|} \right] & = E_{\z} \left[ \ln \frac{ \etastar(\Y(\Zt,\Utt)) \| \Ztt \|}{\| \Zt  \|}  \right] \\
& = E_{\z} \left[ \ln \etastar(\Y(\Zt,\Utt)) \right] + E_{\z} [ \ln \| \Ztt \| ] - E_{\z} [ \ln \| \Zt \| ] \enspace.
\end{align*}
However $E_{\z} [ \ln \| \Zt \| ] = \int P^{t}( \z, d \y) \ln \| \y \| $ that converges to $\int \ln \| \y \| \pi(d\y)$ according to \eqref{eq:cv-fnorm}. This in turn implies that $E_{\z} [ \ln \| \Ztt \| ] $ converges to $\int \ln \| \y \| \pi(d\y)$ and hence using the proven result for the right limit in \eqref{eq:cvExpe}, we obtain the left limit in \eqref{eq:cvExpe}.\endproof

Stability like positivity and Harris-recurrence can be studied using drift conditions or Foster-Lyapunov criteria. A drift condition typically states that outside a set $C$, $\Delta V(\z)$ is ``negative''. However ``negativity'' is  declined in different forms. A drift condition for Harris recurrence of a $\varphi$-irreducible chain reads: if there exist a petite set $C$ and a function $V$ unbounded off petite sets such that
$$
\Delta V( \z ) \leq 0 \,, \z \in C^{c}
$$
holds, then the chain $\Z$ is Harris-recurrent \cite[Theorem~9.1.8]{Tweedie:book1993}. To ensure in addition positivity, a drift condition reads: if there exists a petite set $C$ and $V$ everywhere finite and bounded on $C$, a constant $b < \infty$ such that 
$$
\Delta V( \z) \leq -1 + b 1_{C}( \z) , \z \in \ZZ
$$
holds, then $\Z$ is positive Harris-recurrent \cite[Theorem 11.3.4]{Tweedie:book1993}. 

Positivity and Harris-recurrence are typically proven using a stronger stability notion called \emph{geometric ergodicity}   \cite{companion-oneplusone,TCSAnne04}. Geometric ergodicity characterizes that $P^{t}(\z,.)$ approaches the invariant probability measure $\pi$ geometrically fast, at a rate $\rho < 1$ that is independent of the initial point $\z$. A drift condition for proving geometric ergodicity for a $\varphi$-irreducible and aperiodic chain reads: there exist a petite set $C$ and constants $b < \infty$, $\beta > 0$ and a function $V \geq 1$ finite at some $\z_{0} \in \ZZ$ satisfying
\begin{equation}
\Delta V( \z) \leq - \beta V( \z) + b 1_{C}( \z),  \z \in \ZZ \enspace.
\end{equation}
This geometric drift condition  implies that there exist constants $r>1$ and $R < \infty$ such that for any starting point in the set $S_{V}= \{ \z : V(\z) < \infty \} $
\begin{equation}\label{eq:fund}
\sum_{t} r^{t} \| P^{t}(\z_{0},.) - \pi \|_{V} \leq R V(\z_{0})
\end{equation}
where $\| \nu \|_{V} = \sup_{g: |g| \leq V} | \nu(g)|$ (see  \cite[Theorem 15.0.1]{Tweedie:book1993}).
This latter equation allows to have a stronger formulation for the linear convergence of the expected log-progress expressed in Theorem~\ref{theo:LCexp} as formalized in the next theorem.

\begin{theorem}\label{theo:fromGEO-ERGO}
\new{Let $(\Xt,\st)_{t \in \NNN}$ be the recursive sequence generated by a translation and scaling-invariant \acprs\ algorithm satisfying the assumptions of Proposition~\ref{prop:MConscalinginvariant} optimizing a scaling-invariant function where w.l.o.g.\ $\xstar$ is zero. Let $(\Zt)_{t \in \NNN}$ be the homogeneous Markov chain defined in Proposition~\ref{prop:MConscalinginvariant}.}
Assume that $\Z$ is geo\-me\-tri\-cally ergodic satisfying a drift condition with $V$ as drift function. Let $g(\z) = E\left[ \ln[{\|\GG_{1}((\z,1),\Y(\z,\U))\|}/{\| \z \|}] \right]$  and assume that $|g| \leq \beta V$ with $\beta \geq 1$. Then, there exist $r > 1$ and $R< \infty$ such that for any starting point $(\x_{0}, \sigma_{0})$
\begin{equation}\label{eq:non-asymptGE}
\sum_{t} r^{t} |E_{\frac{\x_{0}}{\sigma_{0}}} \ln \frac{\| \Xtt\|}{\|\Xt\|} - (-\CR) | \leq R V\left(\frac{\x_{0}}{\sigma_{0}}\right) \enspace .
\end{equation}
In particular, for any initial condition $(\x_{0},\sigma_{0})$,
$\lim_{t \to \infty} |E_{\frac{\x_{0}}{\sigma_{0}}} \ln \frac{\| \Xtt\|}{\|\Xt\|} - (- \CR)  | r^{t} = 0$
where $r$ is independent of the starting point. Or also for any initial condition, for any $t$,
$
\left|E_{\frac{\x_{0}}{\sigma_{0}}} \ln \frac{\| \Xtt\|}{\|\Xt\|} - (- \CR) \right|  \leq \frac{R V(\x_{0}/\sigma_{0})}{r^{t}}
$.
Let $\tilde g(\z) = E [ \ln \etastar(\z,\Y(\z,\U) ] $. If $\tilde g \leq \beta V$ for $\beta \geq 1$, then there exist $r > 1$ and $R < \infty$ such that 
for any starting point $(\x_{0}, \sigma_{0})$
\begin{equation}\label{eq:GEstep-size}
\sum_{t} r^{t} |E_{\frac{\x_{0}}{\sigma_{0}}} \ln \frac{\sigma_{t+1}}{\sigma_{t}} - (-\CR) | \leq R V\left(\frac{\x_{0}}{\sigma_{0}}\right) \enspace.
\end{equation}
In particular, for any initial condition $(\x_{0},\sigma_{0})$,
$\lim_{t \to \infty} |E_{\frac{\x_{0}}{\sigma_{0}}} \ln \frac{\sigma_{t+1}}{\sigma_{t}} - (- \CR)  | r^{t} = 0$
where $r$ is independent of the starting point. Or also for any initial condition, for any $t$,
$
\left|E_{\frac{\x_{0}}{\sigma_{0}}} \ln \frac{\sigma_{t+1}}{\sigma_{t}} - (- \CR) \right|  \leq \frac{R V(\x_{0}/\sigma_{0})}{r^{t}}
$.
\end{theorem}
\begin{proof}
We assume that $\Z$ is geometrically ergodic satisfying a drift condition with $V$ as drift function. It also implies that $\beta V$ satisfies a drift condition for $\beta \geq 1$. Hence according to \eqref{eq:fund}, there exists $R > 0$ and $r > 1$ such that for any starting point $\z_0$ in the set $S_V = \{ \z : V(\z) < \infty \}$
\begin{equation}\label{eq:local1}
\sum_t r^t \| P^t(\z_0, .) - \pi \|_{\beta V} \leq R V(\z_0) \enspace,
\end{equation}
where $\| \nu \|_{V} = \sup_{g: |g| \leq V} | \nu(g)|$.
Remark now that $E_{\z_0} \ln \| \Xtt \| / \| \Xt \| = E_{\z_0} g(\Z_t) = P^t(\z_0,.) (g)$ where $\z_0 = \x_0 / \sigma_0 $ and thus
\begin{multline*}
| E_{\z_{0}}\left[\ln \frac{\| \Xtt \|}{\| \Xt \|}\right] - \CR | =  | \int g(\z) P^{t}(\z_{0},d\z) - \int \pi(d\z) g(\z) | = \\ | (P^{t}(\z_{0},.) - \pi)(g)|  \leq  \| P^{t}(\z_{0},.) - \pi \|_{\beta V} \enspace
\end{multline*}
where for the last inequality we have used the assumption that $|g| \leq \beta V$. Hence according to \eqref{eq:local1}, there exists $R>0$ and $r > 1$ such that
$$
\sum_t r^t | E_{\z_{0}}\left[\ln \frac{\| \Xtt \|}{\| \Xt \|}\right] - \CR | \leq R V ( \x_0 / \sigma_0) \enspace.
$$
The same holds {\it mutatis mutandis} to prove \eqref{eq:GEstep-size}.
\end{proof}

\del{Geometric ergodicity is also a sufficient condition for the existence of a Central Limit Theorem (see \cite[Theorem~7.0.1]{Tweedie:book1993}) that can characterize how fast $\frac1t \ln \st/\sigma_{0}$ or $\frac1t \ln \| \Xt \| / \| \X_{0} \|$ approach the limit $- \CR$. We refer to \cite[Theorem~4.10]{Tweedie:book1993} for the details.}

\subsection{On Non-asymptotic Results}

We have presented asymptotic convergence results that hold for time to infinity. However from Theorem~\ref{theo:fromGEO-ERGO}, we can derive non-asymptotic results as stated in the following proposition.
\begin{proposition}\label{prop:non-asympt}
Assume that \eqref{eq:non-asymptGE} holds, then for all initial condition $(\x_0, \sigma_0)$ and for all time step $t$, the following non-asymptotic bound holds
\begin{equation}\label{eq:amoup}
E_{\frac{\x_{0}}{\sigma_{0}}} \ln \frac{\| \Xt\|}{\|\X_0\|} \leq - t \CR + R V\left(\frac{\x_{0}}{\sigma_{0}}\right) \frac{r}{r - 1} \enspace,
\end{equation}
where $\CR$ is the convergence rate defined in \eqref{eq:ConvRate}, $V$ is the drift function (see Theorem~\ref{theo:fromGEO-ERGO}) and $r$ and $R$ are the constants that appear in \eqref{eq:non-asymptGE}.
\end{proposition}
\begin{proof}
  Indeed, from \eqref{eq:non-asymptGE}, we obtain that for all $t$,
$$
r^t \left( E_{\frac{\x_{0}}{\sigma_{0}}} \ln \frac{\| \Xtt\|}{\|\Xt\|} - (-\CR) \right) \leq  R V\left(\frac{\x_{0}}{\sigma_{0}}\right)
$$
and thus 
$
E_{\frac{\x_{0}}{\sigma_{0}}} \ln \frac{\| \Xtt\|}{\|\Xt\|} - (-\CR) \leq \frac{1}{r^t} R V\left(\frac{\x_{0}}{\sigma_{0}}\right) $.
By summing up the previous inequation, we find
$$
E_{\frac{\x_{0}}{\sigma_{0}}} \ln \frac{\| \Xt\|}{\|\X_0\|} + t \CR \leq R V\left(\frac{\x_{0}}{\sigma_{0}}\right) \sum_{k=0}^{t-1}  (1/r)^k \leq R V\left(\frac{\x_{0}}{\sigma_{0}}\right) \frac{1}{1 - 1/r} = R V\left(\frac{\x_{0}}{\sigma_{0}}\right) \frac{r}{ r - 1}
$$
and thus \eqref{eq:amoup} holds.
\end{proof}

In \eqref{eq:amoup}, the constants are not explicitly known. The convergence rate $\CR$ is expressed as an expectation with respect to the stationary measure of the Markov chain $\Z$ \new{while} $r$ \new{and} $R$ are \new{finite} constants for which no further estimates are know {\it a priori}. \generalnote{The term $R V\left({\x_{0}}/{\sigma_{0}}\right) \frac{r}{r - 1}$ quantifies the adaptation time. Interestingly, we deduce that if we start from $(\x_0',\sigma_0') = (\alpha \x_0,\sigma_0)$, then the adaptation time will be equal to $R V(\alpha \x_0/\sigma_0) \frac{r}{r - 1}$. Assume for instance that $V(\z) = \| \z \| + 1 $, then the adaptation time will typically roughly be $\alpha$ times longer than if started from $(\x_0,\sigma_0) $. }\niko{The problem with this reasoning is that the term only gives the upper bound for the adaptation time. To say that the adaptation time "will be equal" is not true in general, not even approximately. A simple counter example: if $\x_0=0.01, \sigma_0=1$, then $\alpha=1/1000$ practically won't change the adaptation time in a non-elitist ES, and certainly not by a factor of 1000.  }\anne{That's a very good point. Thanks. I would suggest to keep all this in a note four ourselves and just delete what I wrote.}\todo{put at the place where we comment the experiment on too small step-size.}
\mathnote{
\begin{proposition}
Assume that \eqref{eq:non-asymptGE} holds and that $ E_{\frac{\x_{0}}{\sigma_{0}}} \ln \frac{\| \Xt\|}{\|\X_0\|} $ and $s_t := \sqrt{\Var_{\frac{\x_{0}}{\sigma_{0}}} \ln \frac{\| \Xt\|}{\|\X_0\|}}$ are finite. Then for all $t$, with probability at least $1 - \delta$
\begin{equation}
\ln \frac{\| \Xt\|}{\|\X_0\|} < - t \CR + R V\left(\frac{\x_{0}}{\sigma_{0}}\right) \frac{r}{r - 1} + \frac{ s_t }{  \sqrt{\delta}}.
\end{equation}
Hence let $T_\epsilon : = \min \{ t > 0 |  \| \Xt \| / \| \X_0 \| \leq \epsilon   \}$ be the first hitting time such that the fraction error $\| \Xt \| / \| \X_0 \| $ is less than $\epsilon$, then 
$$
T_\epsilon \leq \bar{T}_\epsilon
$$
where 
\begin{equation}\label{eq:hittingTimeBound}
\bar{T}_\epsilon : = \min \{ t > 0 \, | \, t \, \CR - R V\left(\frac{\x_{0}}{\sigma_{0}}\right) \frac{r}{r - 1} - \frac{ s_t }{  \sqrt{\delta}} \geq  \ln 1/ \epsilon \} \enspace.
\end{equation}
\end{proposition}
\begin{proof}
From \eqref{eq:non-asymptGE}, we obtain that for all $t$
$$
r^t \left( E_{\frac{\x_{0}}{\sigma_{0}}} \ln \frac{\| \Xtt\|}{\|\Xt\|} - (-\CR) \right) \leq  R V\left(\frac{\x_{0}}{\sigma_{0}}\right)
$$
and thus 
$$
E_{\frac{\x_{0}}{\sigma_{0}}} \ln \frac{\| \Xtt\|}{\|\Xt\|} - (-\CR) \leq \frac{1}{r^t} R V\left(\frac{\x_{0}}{\sigma_{0}}\right)
$$
By summing up the previous inequation
$$
E_{\frac{\x_{0}}{\sigma_{0}}} \ln \frac{\| \Xt\|}{\|\X_0\|} + t \CR \leq R V\left(\frac{\x_{0}}{\sigma_{0}}\right) \sum_{k=0}^{t-1}  (1/r)^k \leq R V\left(\frac{\x_{0}}{\sigma_{0}}\right) \frac{1}{1 - 1/r} = R V\left(\frac{\x_{0}}{\sigma_{0}}\right) \frac{r}{ r - 1}
$$
and thus
\begin{equation}\label{eq:moup}
E_{\frac{\x_{0}}{\sigma_{0}}} \ln \frac{\| \Xt\|}{\|\X_0\|} \leq - t \CR + R V\left(\frac{\x_{0}}{\sigma_{0}}\right) \frac{r}{r - 1}
\end{equation}
Let us denote $A_t$ the sequence of random variable $A_t =  \ln \frac{\| \Xt\|}{\|\X_0\|} $ and $s_t = \sqrt{\Var(A_t)}$ (assume to be finite). Then from Chebyshev's inequality, for any $\alpha > 0$,
$$
\Pr \left( | A_t - E_{\frac{\x_0}{\sigma_0}} \left[ A_t \right] | \geq s_t \sqrt{\alpha} \right)  \leq \frac{1}{\alpha}
$$
For $\alpha > 1$ and defining $\delta = 1 - 1/\alpha$, we obtain that with probability at least $1 - \delta$
$$
\left| A_t - E_{\frac{\x_0}{\sigma_0}} \left[ A_t \right] \right| < \frac{ s_t }{  \sqrt{\delta}} \enspace ,
$$
which implies that with probability at least $1 - \delta$
$$
A_t < E_{\frac{\x_0}{\sigma_0}} \left[ A_t \right] + \frac{ s_t }{  \sqrt{\delta}}
$$
Using \eqref{eq:moup} we find that with probability at least $1-\delta$
$$
\ln \frac{\| \Xt\|}{\|\X_0\|} < - t \CR + R V\left(\frac{\x_{0}}{\sigma_{0}}\right) \frac{r}{r - 1} + \frac{ s_t }{  \sqrt{\delta}}
$$
\end{proof}
To get a more explicit expression for a bound of the hitting time using \eqref{eq:hittingTimeBound}, we would need to have an estimate of the variance $\s_t$. If we assume that $\s_t^2 = c \, t  $ where $c > 0$ is a real constant which is in particular the case if $\ln \| \Xtt \| / \| \Xt \|$ are i.i.d., then we find that
$$
\bar{T}_\epsilon = \left\lceil \frac{1}{4 \CR^2} \left( \frac{c}{\sqrt{\delta}} + \sqrt{ \frac{c^2}{\delta} + 4 \CR \left( R V\left(\frac{\x_{0}}{\sigma_{0}}\right) \frac{r}{r - 1} + \ln(1/\epsilon) \right) }  \right)^2 \right\rceil
$$
where $  \lceil x \rceil $ denotes the ceiling of $x$. In general we can expect to obtain lower and upper bounds estimates for the variance $\s_t^2$ that can be used to obtain estimates of the hitting time $T_\epsilon$.}

\subsection*{Interpretation and Illustration}
\begin{figure}
\centering
\includegraphics[width=0.32\textwidth]{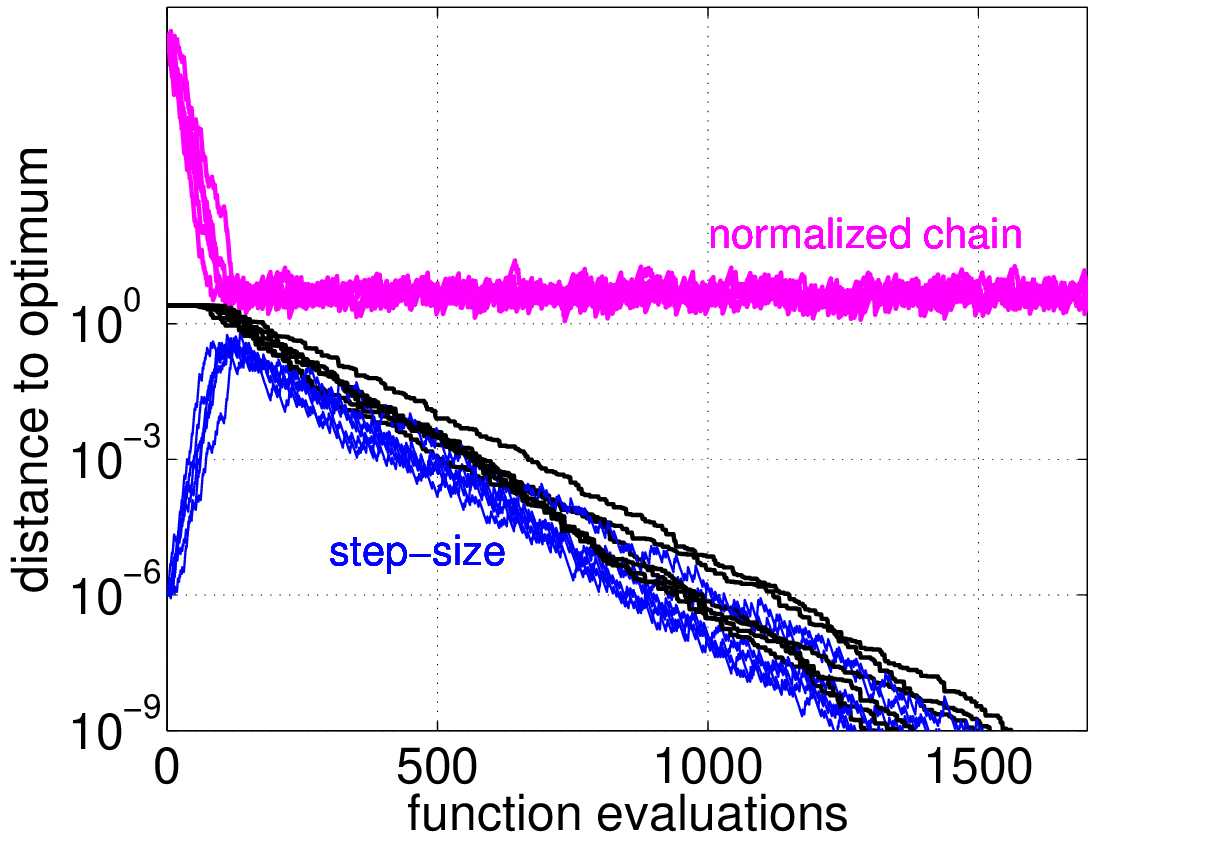}
\includegraphics[width=0.29\textwidth]{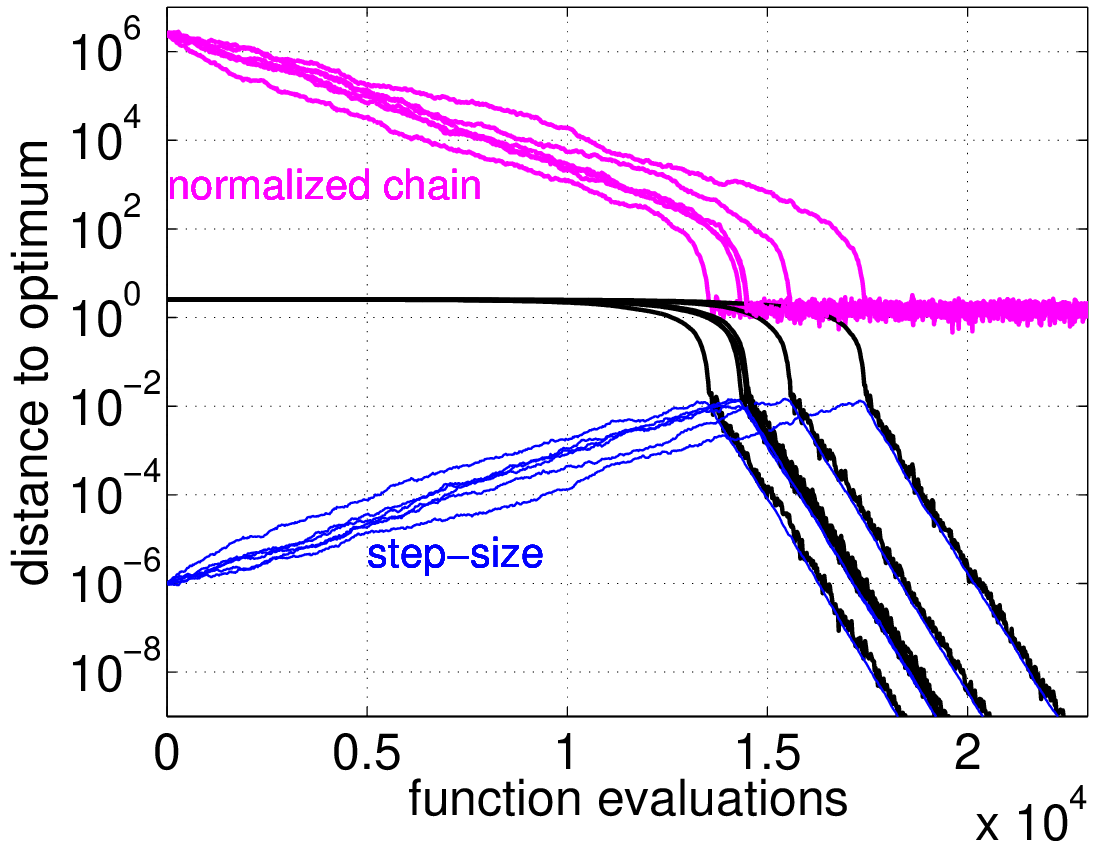}
\includegraphics[width=0.295\textwidth]{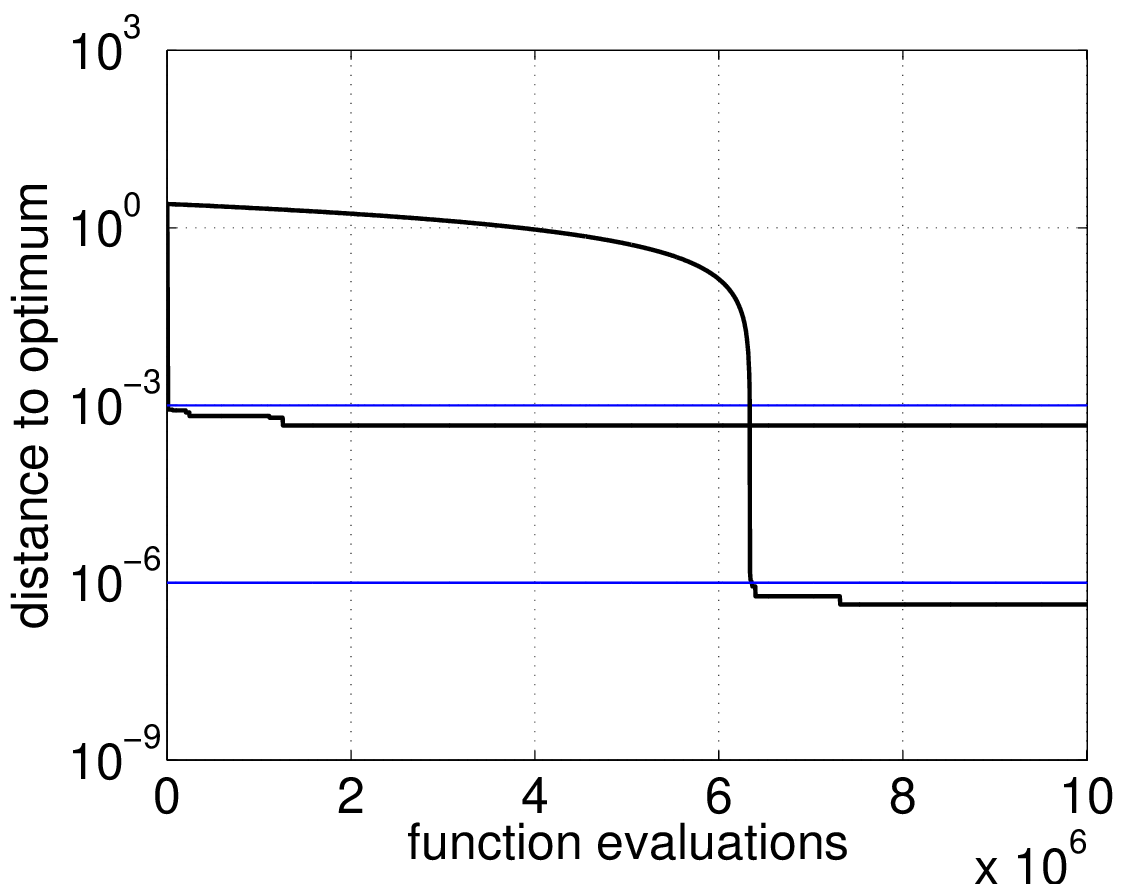}
\caption{\label{fig:simul} Convergence simulations  on spherical functions $f(\x)= g(\| \x \|)$ for $g \in \Monotone$ in dimension $\dim = 10$.
Left: Simulation of the $(1+1)$-ES with one-fifth success rule (see Section~\ref{sec:ex}, step-size update of \eqref{eq:ssOPO} implemented with parameters $\ptarget =1/5$, $\LRsigma=1/3$ were used)). Middle:
 \xNES  (see Section~\ref{sec:ex}) using the parameters of Table~\ref{param-expe}. 
Each plot is in log scale and depicts in black the distance to optimum, i.e.\ $\| \Xt\|$, in blue the respective step-size $\st$ and in magenta the norm of the normalized chain $\| \Zt\|$. The $x$-axis is the number of function evaluations corresponding thus to the iteration index $t$ for the $(1+1)$-ES and to $p \times t$ for \xNES.
 For both simulations $6$ independent runs are conduced starting from $\X_{0}=(0.8 ,0.8 ,\ldots,0.8 )$ and $\sigma_{0}=10^{-6}$. Right: Simulation of a $(1+1)$-ES with constant step-size. Two runs conducted with a constant step-size equal to $10^{-3}$ and $10^{-6}$. The distance to the optimum is depicted in black and the step-size in blue.}
\end{figure}
Figure~\ref{fig:simul} illustrates the theoretical results formalized above. On the two leftmost plots, six single runs of the $(1+1)$-ES with one-fifth success rule and of the \xNES\ algorithm optimizing spherical functions $f(\x)= g(\| \x \|)$ for $g \in \Monotone$ in dimension $n=10$ are depicted (see caption for parameters used). The evolution of $\| \Xt \|$, $\sigma_{t}$ and $\| \Zt \|$ are displayed using a logarithmic scale. In order to be able to compare the convergence rate between both algorithms, the $x$-axis represents the number of function evaluations and not the iteration index (however for the $(1+1)$-ES both number of function evaluations and iteration index coincide). The runs are voluntarily started with a too small step-size (equal to $10^{6}$) compared to the distance to the optimum in order to illustrate the adaptivity property of both algorithms. For the $(1+1)$-ES, we observe a low variance in the results: after 100 function evaluations all the runs reach a well adapted step-size and the linear convergence is observed for both the step-size and the norm. The slope of the linear decrease observed coincides with $-\CR$, the convergence rate associated to the $(1+1)$-ES (up to a factor because a base 10 is used for the display). As theoretically stated $\ln \st$ and $\ln \| \Xt\|$ converge at the same rate (same slope for the curves). The norm of the normalized chain $\Zt$ is depicted in magenta, we observe that the stationary regime or steady-state of the chain correspond to the moment where linear convergence starts as predicted by the theory.

For the \xNES\ algorithm, we observe the same behavior for each single run, i.e.\ a first phase where the adaptation of the step-size is taking place, here it means that the step-size is increased and a second phase where linear convergence is observed. In terms of normalized chain it corresponds to a first phase where a ``transient behavior'' is observed and a second  phase where the distribution of the chain is close from the stationary distribution. We however \new{observe that more time is needed for \xNES\ than for the $(1+1)$-ES  to reach the stationary regime (second phase).} The slope after reaching a reasonable step-size corresponds to the convergence rate $\CR$ multiplied by $p$ (up to the difference with the base 10 logarithm). \generalnote{WITH OLD PARAMETERS (=CMA ones) Both convergence rates between the $(1+1)$-ES and \xNES\ are comparable while of course the number of function evaluations to reach $10^{-9}$ starting from a step-size of $10^{-6}$ is much longer for \xNES\ as the adaptation phase is much slower for \xNES\ than for the $(1+1)$-ES. It illustrates that only comparing the convergence rate (per function evaluation) can be misleading as it does not reflect the adaptation time.}

\mathnote{So this larger variance is most certainly explained by the variance of the log-step-size change on linear functions. Computation of this variance should be do-able while a bit ugly. We also believe that cumulation decreases this variance (must be related to some findings of Alexandre).}

Convergence of each single run reflects the almost-sure convergence property. Theoretically, the geometric ergodicity ensures that the adaptation phase is ``short'' as the Markov chain reaches its stationary state geometrically fast, i.e.\ we can start from a bad initial step-size, this bad choice will be fast corrected by the algorithm that will then converge linearly. In terms of the Markov chain $\Zt$, the bad choice is translated as starting far away from the stationary distribution and the correction means reaching the stationary measure. We see however that in those ``fast'' statements the constants are omitted as for the \xNES\ we observe that the step-size increase can take up to more than 3 times more function evaluations than decreasing the step-size.

The rightmost plot in Figure~\ref{fig:simul} depicts the convergence of a non step-size adaptive strategy, here a $(1+1)$-ES with constant step-size equal to $10^{-3}$ and $10^{-6}$. Theoretically the algorithm converges with probability one, at the same rate than the pure random search algorithm though. The plots illustrate the necessity of a step-size adaptive method: a wrong choice of the initial parameter has a huge effect in terms of time needed to reach a given target value. Indeed starting from a step-size of $10^{-3}$, $10^4$ function evaluations are needed to reach a target of $10^{-3}$ while with a step-size of $10^{-6}$, roughly $6.2 \times 10^{6}$ function evaluations are needed to reach the same target (i.e.\ more than 3 orders of magnitude more). Also we see that starting from a step-size of $10^{-3}$, the number of function evaluations to reach a target of $10^{-6}$ will likely be above $10^{19}$ function evaluations.\niko{As of 2015, 1e7 evaluations take about 5 minutes on Anne's laptop, $10^{19}$ means then 10 million years. }

This rightmost plot also illustrates the importance to study theoretically convergence rates, as convergence with probability one can be associated to an algorithm having very poor performance for practical purposes.

In Appendix~\ref{sec:AppNE}, we present some more numerical tests of the $(1+1)$-ES with one-fifth success rule and of \xNES\ together with the other step-size adaptive algorithms sketched in the paper, the Nelder-Mead algorithm \nnew{\cite{NelderMead:65}} and the Random Pursuit algorithm \cite{stich:2013}.

\section{Discussion}\label{sec:discussion}
This paper provides a general methodology to prove global linear convergence of some \cprs algorithms on scaling-invariant functions, a class of functions that includes in particular many \emph{non quasi-convex} and \emph{non continuous} functions. The methodology exploits the invariance properties of the algorithms and turns the question of global linear convergence into the study of the stability of an underlying homogeneous normalized Markov chain. It generalizes previous works \cite{Bienvenue:2003,TCSAnne04} to a broader class of functions and a broader class of algorithms.

Different notions of stability for a Markov chain exist. They imply different (non equivalent) formulations of linear convergence that give many insights on the dynamic of the algorithm:
positivity and Harris recurrence essentially imply the existence of a convergence rate $\CR$ such that for any initial state almost surely
$$
\lim_{t \to \infty}\frac{1}{t} \ln \frac{\| \Xt - \xstar \|}{\| \X_{0} - \xstar \|} = - \CR = \lim_{t \to \infty} \frac{1}{t} \ln \frac{\st}{\sigma_{0}}   \enspace
$$
holds.
Positivity essentially implies that the limit of the expected log-progress or log step-size is $- \CR$. More precisely for any initial state $\X_{0}=\x, \sigma_{0}= \sigma$
$$
\lim_{t \to \infty} E_{\frac{\x}{\sigma}} \left[ \ln \frac{\|\Xtt - \xstar \|}{\| \Xt - \xstar \|} \right] = - \CR =
\lim_{t \to \infty} E_{\frac{\x}{\sigma}} \left[ \ln \frac{\sigma_{t+1}}{\sigma_{t}} \right]  \enspace .
$$
Geometric ergodicity then characterizes that the expected log-progress sequence converges geometrically fast to the convergence rate limit $-\CR$.

Linear convergence  holds under any initial condition. This reflects the practical adaptivity property: the step-size parameter is adjusted on the fly and hence a bad choice of an initial parameter is not problematic. We have illustrated that the transition phase, \new{in other words the time}\del{formally how long} it takes to be close to the invariant probability measure, relates to how long it takes to forget a bad initialization. 

The methodology provides an exact formula for the convergence rate $\CR$ expressed in terms of expectation w.r.t.\ the invariant probability measure of the normalized Markov chain. Exploiting the exact expression for deducing properties on the convergence rate like dependency w.r.t.\ the dimension or dependency on function properties (like condition number of the hessian matrix if the function is convex quadratic) seems however to be quite challenging with this approach while it is feasible with ad-hoc techniques for specific algorithms (see \cite{jens:2005}). Numerical simulations need then to be performed to investigate those properties. 
Nevertheless, the Markov chain methodology proposed here provides a rigorous framework for performing these simulations\del{: it proves that by essence Monte-Carlo simulation of the convergence rate is consistent}.\todo{}\niko{"to be by essence consistent" doesn't carry much meaning to me, given that everything we do should be consistent to begin with. }%
\del{ and even provides through the Central Limit Theorem asymptotic confidence intervals for the simulations. }

We have restricted for the sake of simplicity the \acprs\ framework to the update of a mean vector and a step-size. However some step-size adaptive algorithms like the cumulated step-size adaptation used in the CMA-ES algorithm include other state variables like an auxiliary vector (the path) used to update the step-size \cite{hansen2001}. Adaptation of the present methodology to cases with more state variables seems however relatively straightforward\new{ while we foresee that proving the stability of the underlying homogeneous Markov chains can be very complex}.

The current approach exploits heavily invariance properties of the algorithms investigated together with invariance properties of the objective function. Hence, we expect that the methodology does not generalize directly to all unimodal functions. However we believe that there is room for extensions of the framework, for instance in some noisy contexts (i.e.\ the objective function is stochastic).

The algorithms studied in this paper are adapting a global scaling of the underlying probability distribution through the adaptation of the step-size. We would like to stress however that in practice, algorithms should also adapt the geometric shape of the distribution, for instance through a covariance matrix. Adapting the shape is indeed crucial to efficiently solve \emph{ill-conditioned} problems. In effect, the state-of-the art CMA-ES algorithm adapts \emph{both} the step-size and the covariance matrix of the multivariate normal distribution used to sample new points.

\nnnew{Last, we want to emphasize that the current paper clarifies the relationship between comparison-based stochastic adaptive algorithms and Markov chain Monte Carlo (MCMC) algorithms. MCMC methods are algorithms used to sample probability distributions. 
They implement the construction of a stable Markov chain having as invariant distribution the distribution to be sampled. 
This latter distribution is typically non-singular. In contrast, given that the goal in optimization is to converge, the Markov chain generated by the optimization algorithm is not stable. However as seen in the paper, on scaling-invariant functions, a joint potentially stable homogeneous Markov chain associated to the original chain exists (here this chain is $\Zt=(\Xt - \xstar) / \st $). This Markov chain defines an MCMC algorithm associated to the optimization algorithm.}

Another possible approach to analyze the linear convergence of a \cprs consists in using stochastic approximation theory or the \emph{method of ordinary differential equations} \cite{Kushner2003book,Borkar2008book}. We believe that linear convergence can then be proven on different function classes for learning rates ($\LRm, \LRsigma$ in \eqref{eq:commamean} and \eqref{eq:nnes2} for instance) that are small enough. A step needed in this analysis is the investigation of an ordinary differential equation obtained by suitable averaging. We believe that this can be done by extending results presented in \cite{akimoto:2012}.\\

%
%
%
%
%
\paragraph{\bf Acknowledgements} We would like to thank Mihai Anitescu for his kind support during the pre-publication process of the paper and his engagement to find qualified reviewers for a paper which lies beyond standard scope of the journal. In addition, we would like to thank Youhei Akimoto for helpful discussions regarding the manuscript. Part of those discussions took place during the Dagstuhl seminar No 15211 on the Theory of Evolutionary Algorithms (\url{http://www.dagstuhl.de/15211}) that we would like to acknowledge. We would also like to thank Alexandre Chotard for helpful discussions and proofreading of the manuscript. This work was partially supported by the grant ANR-12-MONU-0009 (NumBBO) of the French National Research Agency.

\begin{appendix}
\section{Examples of \acprs}\label{app:ex}

We provide here a detailed description of the examples briefly presented in Section~\ref{sec:ex}. This appendix is self-contained and can be read independently of Section~\ref{sec:ex}.

\subsection{Non-elitist Step-size Adaptive Evolution Strategies (ES)}\label{app:sec:algo-comma}

We consider two examples of algorithms following Definition~\ref{def:SSAES} that were introduced under the name Evolution Strategies (ES). They all share the  same sampling space $\Uspace = \R^{n \times p}$. A vector $\Ut \in \Uspace=\R^{n \times p}$ is composed of $p$ i.i.d.\ standard multivariate normal distributions, i.e.\ $\Ut^{i} \sim \Normal \in \R^{n}$ 
and thus the joint density\footnote{\nnnew{With a small abuse of notations, we use the same notations for the density associate to the distribution $p_\U$ than for the distribution itself.}} $p_{\U}(\uu^{1},\ldots,\uu^{p})$ is the product $p_{\N}(\uu^{1})  \ldots p_{\N}(\uu^{p})$ where 
$ p_{\N}(\x) = \frac{1}{( 2 \pi)^{n/2}} \exp \left( - \frac12 \x^{T} \x \right)$.
The solution operator to sample new solutions is given by:
\begin{equation}\label{app:eq:mut}
\Sol((\Xt,\st),\Utt^{i}) (= \Xtt^i ) = \Xt + \st \Utt^i \,, \, i=1,\ldots,p \enspace,
\end{equation}
and hence each candidate solution $\Xtt^{i}$ follows the distribution $\N(\Xt,\st^{2} \Id)$. 
\hide{The vector $\Xt$ is thus the mean vector of the underlying distribution $\N(\Xt,\st^{2} \Id)$. Using the terminology sometimes employed for evolution strategies--for stressing the parallel with biology--, \eqref{eq:mut} implements a \emph{mutation}: the new solutions $\Xt^{i}$, also called \emph{offspring}, are obtained by mutation of the solution $\Xt$ also referred to as \emph{parent}. }

Given the vector of ordered samples $\Ytt=\Perm * \Utt = (\Utt^{\Perm(1)}, \ldots, \Utt^{\Perm(p)})$ where $\Perm$ is the permutation resulting from the ranking of objective function values of the solutions (see \eqref{eq:perm}), the update equation for the mean vector $\Xt$ that defines the function $\GG_{1}$ is given by 
\begin{equation}\label{app:eq:commamean}
\Xtt =  \GG_{1}((\Xt,\st),\Ytt) : = \Xt + \LRm \st \sum_{i=1}^p w_i \Ytt^{i}
\end{equation}
where $\LRm \in \Rplus $ is usually called the learning rate and is often set to $1$ and $w_{i} \in \R$ are weights that satisfy $w_{1} \geq \ldots \geq w_{p}$ and $\sum_{i=1}^{p} |w_{i}| = 1$. \hide{The update implements the idea to move the vector towards better solutions by doing a  \emph{weighted recombination} of the $p$ best solutions \cite{rudolph:97,hansen2001,arnold2006weighted}. Often, only positive weights are considered where optimally half of the weights should be non zero, i.e.\ $w_{1} \geq w_{2} \geq \ldots \geq w_{\lfloor p/2\rfloor} > 0$ and $w_{i} = 0$ for $i > \lfloor p/2\rfloor$. If equals weights are used the terminology intermediate recombination is employed \cite{HGBeyer01}.}

Recently, an interesting interpretation of the meaning of the vector $\st \sum_{i=1}^p w_i \Ytt^{i}$ was given: it is an approximation of the ($n$ first coordinates) of the natural gradient of a joint criterion defined on the manifold of the family of Gaussian probability distributions \cite{akimoto2010bidirectional,ollivier2013information}.

Several step-size updates have been used with the update of the mean vector $\Xt$ in \eqref{app:eq:commamean}. First of all, consider the update derived from the cumulative step-size adaptation or path-length control without cumulation \cite{hansen1995adaptation} that reads
\begin{equation}\label{app:eq:ssDSA}
\stt = \GG_{2}(\st,\Ytt) = \st \exp \left( \LRsigma \left( \frac{ \sqrt{\mueff } \| \sum_{i=1}^{p} w_{i} \Ytt^{i} \|}{E[\| \N(0,\Id) \|]} - 1 \right) \right)
\end{equation}
where $\LRsigma>0$ is the learning rate for the step-size update usually set close to one and $\mueff = 1/\sum w_{i}^{2} $. The value $1/\LRsigma$ is often considered as a damping parameter.
The ruling principle for the update  is to compare the length of the recombined step $\sum_{i=1}^{p} w_{i} \Ytt^{i}$ to its expected length if the objective function would return independent random values. Indeed if the signal given by the objective function is random, the step-size should stay constant or \new{increase moderately}. It is not difficult to see that in such conditions, a random ordering takes place and hence the distribution of the vector $\Ytt$ is the same as the distribution of the vector $\Utt$, finally it follows that $\sqrt{\mueff}  \sum_{i=1}^{p} w_{i} \Ytt^{i} $ is distributed according to a standard multivariate normal distribution. Hence \eqref{app:eq:ssDSA} implements to increase the step-size if the observed length of $\sqrt{\mueff}  \sum_{i=1}^{p} w_{i} \Ytt^{i} $ is larger than the expected length under random selection and decrease it otherwise. Overall, the update function associated to the CSA without cumulation reads
$$
\GG_{\dsa}((\x,\sigma),\y) = \left( \begin{smallmatrix}   \x + \sigma \LRm \sum_{i=1}^{p} w_{i} \y^{i} \\ \sigma  \exp\left( \LRsigma \left( \frac{ \sqrt{\mueff} \| \sum_{i=1}^{p} w_{i} \y^{i} \|}{E[\| \N(0,\Id) \|]} - 1 \right) \right) \end{smallmatrix} \right)  \enspace.
 $$
In practice, the step-size update CSA is used in combination with so-called cumulation and in the update \eqref{app:eq:ssDSA}, the term $\sqrt{\mueff } \| \sum_{i=1}^{p} w_{i} \Ytt^{i} \|$ is replaced by the norm of the cumulated path defined as
\begin{equation}\label{eq:path}
\p_{t+1} = (1-c) \p_t + \sqrt{c (2 - c)} \sqrt{\mueff} \sum_{i=1}^{p} w_{i} \Ytt^{i}
\end{equation}
where $c \in (0,1]$ is the cumulation parameter. The cumulated path cumulates information of previous iterations \cite{hansen2001}. The CSA with cumulation is the default step-size adaptation mechanism used in the CMA-ES algorithm.

The second example we present corresponds to the natural gradient update for the step-size with exponential parametrization \cite{Glasmachers2010} that writes 
\begin{align}\label{app:eq:nnes1}
\stt  & = \st \exp\left( \frac{\LRsigma}{2 \dim} \Tr \left( \sum_{i=1}^{p} w_i \Ytt^{i}(\Ytt^{i})^T - \Id \right) \right) \\\label{app:eq:nnes2} & = \st \exp \left( \frac{\LRsigma}{2\dim} \sum_{i=1}^p w_i ( \| \Ytt^{i}  \|^2 - \dim)  \right).
\end{align}
\generalnote{xNES does not work for n to infinity because no variation in the length of the vectors. Actually maybe because of the square the argument does not hold (because of variance) - might be worth being investigated.}We then define the update function for the step-size update in \xNES\ as \del{multiplicative update function for the step-size as
\begin{align}\label{app:eq:dsas}
\etastar_{\xNES}(\y^{1},\ldots,\y^{\mu})& = \exp \left(  \frac{\LRsigma}{2\dim} \sum_{i=1}^p w_i ( \| \y^{i}  \|^2 - \dim)\right)  \enspace ,
\end{align}
and hence}
\begin{equation}\label{app:eq:xNES}
\GG_{\xNES}((\x,\sigma),\y) = \left( \begin{smallmatrix}   \x + \sigma \LRm \sum_{i=1}^{p} w_{i} \y^{i} \\ \sigma  \exp\left( \frac{\LRsigma}{2 \dim} \sum_{i=1}^{p} w_{i} ( \| \y^{i}  \|^{2} - \dim) 
\right) \end{smallmatrix} \right)  \enspace.
 \end{equation}
 \new{Here, when $\LRm$ and $\LRsigma$ are equal, they coincide with the step-size of the (natural) gradient step of a joint criterion defined on the manifold of Gaussian distributions with covariance matrices equal to a scalar times identity \cite{Glasmachers2010}.}
\del{The three subsequent algorithms will be denoted $\commaES_{\times}$ES with $_{\times}$ being either \dsa, \dsas or \xNES\ and the generic notation for one of the three algorithms will be \commaES-ES.}
\hide{Often the parameter $p$ is denoted $\lambda$, a parameter $\mu$ corresponding to the number of positive weights is introduced and the terminology comma selection as opposed to plus (or elitist selection) that will be detailed later\niko{it must be mentioned that plus selection is not modeled here} is employed. This terminology stresses that the update of the vector $\Xt$ and step-size takes into account solutions sampled anew and do not consider ``old'' solutions. Consequently it is not guaranteed that the best solutions at iteration $t+1$ has a smaller objective function value than the best solution at iteration $t$.} 

With those algorithms, it is not guaranteed that the best solution at iteration $t+1$ has a smaller objective function value than the best solution at iteration $t$. In the case where only positive weights are used, a compact notation for the algorithms described above is $\commaES$-ES where $\lambda = p$ and $\mu$ equals the number of non-zero weights.

\paragraph{Invariance properties} The two different \cprs algorithms presented in this section are translation invariant and scale-invariant. They indeed satisfy the sufficient conditions derived in Proposition~\ref{prop:TI} and Proposition~\ref{lem:scaleinvariant}.

\subsection{Evolution Strategy with Self-adaptation}\label{app:sec:sa} 

Another type of algorithms included in the \cprs definition are the so-called self-adaptive step-size ES. The idea of self-adaptation dates back from the 70's and consists in adding the parameters to be adapted (step-size, covariance matrix, ...) to the vector that undergoes variations (mutations and recombinations) and let the selection (through the ordering function) adjusts the parameters \cite{Rechenberg,Schwefel:77}. In the case where one single step-size is adapted, the step-size  undergoes first a mutation: it is multiplied by a random variable following a log-normal distribution $\logn(0,\tau^{2})$ where $\tau \approx {1}/{\sqrt{n}}$. The mutated step-size is then used as overall standard deviation for the multivariate normal distribution $\Normal$. In this case, the space $\Uspace$ equals $\R^{(n+1) \times p}$. The $n$ first coordinates of an element $\Utt^{i} \in \UUU=\R^{n+1}$ denoted $[\Utt^{i}]_{1\ldots n}$ 
 ($\in \R^{n}$) correspond to the sampled standard multivariate normal distribution vector and the last coordinate denoted $[\Utt^{i}]_{n+1}$ to the sampled normal distribution for sampling the log-normal distribution used to mutate the step-size. The solution function is defined as
\begin{equation}\label{app:sample:SA}
\Sol((\Xt,\st),\Utt^{i})= \Xtt^{i} = \Xt + \st \exp \left( \tau [\Utt^{i}]_{n+1} \right) [\Utt^{i}]_{1\ldots n}
\end{equation}
where $[\Utt^{i}]_{1\ldots n} \sim \Normal$ and $[\Utt^{i}]_{n+1} \sim \N(0,1)$. The distribution $p_{\U}$ admits thus a density that equals 
$
p_{\U}(\uu^{1},\ldots,\uu^{p})=p_{\N}(\uu^{1})   \ldots p_{\N}(\uu^{p}) \,, \uu^{i} \in \R^{n+1}
$.
Remark that the ordering function selects the couple multivariate normal distribution and log-normal distribution used to mutate the step-size at the same time. Assuming that only the best solution plays a role in the update of $\Xt$ (i.e.\ it corresponds to a single non-zero weight in the recombination equation \eqref{app:eq:commamean}), the update for the mean vector reads
\begin{equation}\label{app:mean:SA}
\Xtt = \Xt + \st \exp( \tau [\Ytt^{1}]_{n+1}) [\Ytt^{1}]_{1 \ldots n}
\end{equation}
and the update for the step-size is
\begin{equation}\label{app:ss:SA}
\stt = \st \exp ( \tau  [\Ytt^{1}]_{n+1} ) \enspace.
\end{equation}
A step-size adaptive Evolution Strategy satisfying \eqref{app:sample:SA},\eqref{app:mean:SA} and \eqref{app:ss:SA} is called $(1,p)$ self-adaptive step-size ES ($(1,p)$-SA). The $(1,p)$ refers to the fact that a single solution is selected out of the $p$.
The update function $\GG$ for the $(1,p)$-SA reads
$$
\GG_{(1,p){\rm-SA}}((\x,\sigma),\y) = \left( \begin{smallmatrix} \x + \sigma \exp(\tau [\y^{1}]_{n+1}) [\y^{1}]_{1\ldots n} \\ 
\sigma \exp( \tau [\y^{1}]_{n+1} )
 \end{smallmatrix} \right)  \enspace.
$$
We see thus that the step-size is adapted by the selection that occurs through the ordering.
The rationale behind the method being that unadapted step-size cannot successfully give good solutions and that selection will adapt (for free) the step-size (explaining thus the terminology ``self-adaptation''). Self-adaptive algorithms have been popular in the 90's certainly due to the fact that their underlying idea is simple and attractive. However self-adaptation has shortcomings that were explained and discussed previously in \cite{hansen2006ecj,hansen-ppsn:2014}. Different variants of self-adaptation using multiple parents and recombinations exist, we refer to the review paper \cite{beyer2002} for further readings and references.

\paragraph{Invariances} In virtue of Proposition~\ref{prop:TI} and Proposition~\ref{lem:scaleinvariant} the $(1,p)$-SA is translation and scale-invariant.

The linear convergence of the self-adaptive ES algorithm described in this section in dimension $1$ was proven in \cite{TCSAnne04} on spherical functions using the Markov chain approach presented here. 

\subsection{Step-size Random Search or Compound Random Search or (1+1)-ES with $1/5$ Success Rule}\label{app:sec:algo-plus}

The last example presented is an algorithm where the sequence $f(\Xt)$ is decreasing, i.e.\ updates that only improve or leave $\Xt$ unchanged are performed. \hide{The algorithm is often called of \emph{elitist} as the best solution at a current iteration is kept for the next iteration. For the example presented,  at} At each iteration a single new solution is sampled from $\Xt$, i.e.\
\begin{equation*}
\Xtt^{1} = \Xt + \st \Utt^{1}
\end{equation*}
where $\Utt^{1} \in \R^{\dim}$ follows a standard multivariate normal distribution, and hence $\Xtt^{1}$ follows the distribution $\N(\Xt,\st^{2} \Id)$. The step $\Utt^{1}$ is accepted if the candidate solution is better than the current one and rejected otherwise. Let us denote $\Utt^{2}=0 \in \R^{n}$ the zero vector and take $\Utt=(\Utt^{1},\Utt^{2})$.\niko{$\Ut^{1}=0$ would generalize better to the $(1+\lambda)$-ES} Hence $\Uspace = \R^{n \times 2 }$ and the probability distribution of $\U$ equals $p_{\U}(\uu^{1},\uu^{2})=p_{\N}(\uu^{1})  \delta_{0} (\uu^{2})$ where $\delta_{0}$ is the Dirac delta function. The $\Sol$ function corresponds then to the function in \eqref{app:eq:mut}.

The update equation for $\Xt$ is similar to \eqref{app:eq:commamean} with weights $(w_{1},w_{2})=(1,0)$.  
Remark that contrary to the \hide{non-elitist} algorithms presented before, the sampled step $\Utt$, the selected step $\Ytt$ and the new mean $\Xtt$ have a singular part w.r.t. the Lebesgue measure. 
An algorithm following such an update is often referred to as $(1+1)$-ES but was also introduced under the name Markov monotonous search \cite{Zhigljavsky:2008}, step-size random search \cite{Schumer:68} or compound random search \cite{Devroye:72}. 

The adaptation of the step-size idea starts from the observation that if the step-size is very small, the probability of success (i.e.\ to sample a better solution) is approximately one-half but the improvements are small because the step is small. On the opposite if the step-size is too large, the probability of success will be small, typically the optimum will be overshoot and the improvement will also be very small. In between lies an optimal step-size associated to an optimal probability of success \cite{Schumer:68, Rechenberg,Devroye:72}. A proposed adaptive step-size algorithm consists in trying to maintain a probability of success (i.e.\ probability to sample a better solution) to a certain target value $p_{\rm target}$, increase the step-size in case the probability of success is larger than $\ptarget$ and decrease it otherwise \cite{Devroye:72,Rechenberg,Rechenberg:94}.  The optimal probability of success, i.e.\ allowing to obtain an optimal convergence rate has been computed  on the sphere function $f(\x)= \| \x \|^{2}$ for dimension of the search problem going to infinity and is roughly equal to $0.27$ \cite{Schumer:68, Rechenberg}.  Another function where the asymptotic optimal probability of success was computed is the corridor function\footnote{The corridor function is defined as $f(\x) = \x_{1}$ for $- b < \x_{2} < b, \ldots -b  < \x_{n} < b$, for $b >0$ otherwise $+ \infty$.} where it is equal to $1/(2e)$  \cite{Rechenberg:94}. As a trade-off between the probability of success on the sphere and on the corridor, the target probability  is often taken equal to $1/5=0.20$ and gave the name one-fifth success rule to the step-size adaptive algorithm. We call the algorithm with $p_{\rm target}$ as target success probability the \emph{generalized one-fifth success rule}.\footnote{Note that $p_{\rm target}$ does not correspond to the optimal probability of success as indeed if the probability of success equals the target probability, the step-size is kept constant. Hence if convergence occurs the achieved probability of success is smaller than the target probability. Therefore, on the sphere, if convergence occurs, $p_{\rm target}=0.20$ corresponds to an achieved probability of success smaller  than $0.20$, hence a probability of success smaller than optimal which will consequently favor larger step-sizes as the probability of success decreases with increasing step-sizes \cite{companion-oneplusone}.}

Several implementations of the generalized one-fifth success rule exist. In some implementations, the probability of success is estimated by fixing a step-size for a few iterations, counting the number of successful solutions and deducing an estimation of the probability of success. The step-size is then increased if the probability of success is larger than $p_{\rm target}$ and decreased otherwise \cite{Rechenberg,Rechenberg:94}. \hide{This version of the one-fifth success rule is in particular the one investigated theoretically by J\"agersk\"upper \cite{jens-hit-and-run,Jens:2007,jens:gecco:2006,jens:tcs:2006}.} A somehow simpler implementation consists in estimating at each iteration the probability of success as  $1_{\{ f(\Xtt^1) < f(\Xt) \}}=1_{\{\Ytt^{1} \neq 0\}}$\footnote{This equality is true only almost everywhere.}: this indicator function being equal to one in case of success and zero otherwise. Consequently the algorithm will increase the step-size after a successful step and decrease it otherwise as proposed in \cite{Devroye:72,CMA-EDA:ICALP2003}.\niko{FTR: This was indeed not proposed by Rechenberg, IIRC. }
The update rule for the step-size reads 
\begin{align}\label{one-fifth-update}
\stt & = \st \exp \left( \LRsigma \frac{1_{\{ \Ytt^{1} \neq 0 \}} - \ptarget }{1 - \ptarget}  \right) 
\end{align}
where $\LRsigma >0$ is a learning rate coefficient. Denoting $\gamma = \exp(\LRsigma)$ and the target odds ratio $q=\frac{\ptarget}{1-\ptarget}$ (for a target success probability set to $1/5$, the odds ratio $q=1/4$) yields
\begin{equation}\label{app:eq:ssOPO}
\stt = \st \left( \factonefifth 1_{\{ \Ytt \neq 0 \}} + \factonefifth ^{-q}  1_{\{ \Ytt^{1} = 0 \}}\right) = \st \left( (\factonefifth - \factonefifth ^{-q}) 1_{\{ \Ytt^{1} \neq 0 \}} + \factonefifth ^{-q}  \right) \enspace.
\end{equation}
Overall, the update transformation for the $(1+1)$-ES with generalized one-fifth success rule is
$$
\GG_{\plusES_{\onefifth}}((\x,\sigma),\y) = \left( \begin{smallmatrix}   \x + \sigma  \y^{1} \\ 
\sigma  
\left( (\factonefifth - \factonefifth ^{-q}) 1_{\{ \y^{1} \neq 0 \}} + \factonefifth ^{-q}  \right)
\end{smallmatrix} \right)  \enspace.
 $$
In such an algorithm, the best solution cannot be forgotten. Consequently in some noisy settings, the algorithm can get stuck with solutions that are suboptimal because of realizations \emph{of the noise} leading to particularly small (i.e.\ good) function values\niko{whether they are smaller than this one is not overly relevant, in particular as also the optimum can realize smaller values than its true value} (see \cite{jah:2009a} for instance). Consequently, the CMA-ES is implementing an update of $\Xt$ where the best solution is not preserved from one iteration to the next one.
\hide{Elitist selection is not robust to outliers and presence of noise on the objective function explaining why the state-of-the-art method CMA-ES is using a comma selection. However, elitist algorithms are theoretically interesting and the fact that the objective function value of the best solution at a given iteration decreases renders theoretical proofs often easier. This most certainly explicates why the (1+1) algorithm is popular among theoreticians.} 

\paragraph{Invariance} Using again Proposition~\ref{prop:TI} and Proposition~\ref{lem:scaleinvariant}, the $(1+1)$-ES with generalized one-fifth success rule is translation and scale-invariant.

\begin{remark}
In all the examples presented, the $p$ components $(\Utt^{i})_{1 \leq i \leq p}$ of the vectors $\Utt$ are independent. It is however not a requirement of our theoretical setting. \hide{Some algorithms using within an iteration non independent samples were recently introduced \cite{abh2011b,abh2011a} and could be analyzed with the Markov chain approach presented in this paper.}
\end{remark}

\section{Numerical Experiments}\label{sec:AppNE}
We present in this appendix some numerical experiments of the different step-size adaptive randomized search algorithms that were described within the paper, namely the $(1+1)$-ES with one-fifth success rule presented in Section~\ref{app:sec:algo-plus}, the $(1,p)$-ES with self-adaptive mutation presented in Section~\ref{app:sec:sa}, the exponential natural evolution strategy with covariance matrix adaptation switched off presented in Section~\ref{app:sec:algo-comma}, the cumulative step-size adaptation presented in Section~\ref{app:sec:algo-comma}. 
None of these randomized algorithms is state-of-the art because they solely adapt a step-size while the adaptation of the full covariance matrix of the sampling distribution is known to be crucial. We hence also tested the state-of-the art CMA-ES algorithm that combines step-size and covariance matrix adaptation \cite{hansen2001}. For the sake of comparison, we also tested the Nelder-Mead  and the Random Pursuit (RP) algorithms from \cite{stich:2013}. The different parameters or the implementation used are specified in Table~\ref{param-expe}. We refer to \cite{hansen:2014} and \cite{brockhoff:2010} for illustrations of the dependency of the convergence rates in the damping and cumulation parameters.

\begin{figure}
\begin{center}
\begin{tabular}{l|p{0.7\textwidth}}
  $(1+1)_{\onefifth}$-ES
 & $ \LRsigma= 1/ \sqrt{n+1}$ ; $\ptarget = 1/5$ (see \eqref{one-fifth-update}) \\
 $(1,p)$-SA  & $p=\lfloor4 + 3 \log(n) \rfloor$; $\tau = 1/\sqrt{n}$ (see \eqref{app:sample:SA}) \\
 xNES1  & parameters taken from \cite{Glasmachers2010} (except $\LRsigma$) with covariance matrix adaptation turned off ;   $\LRsigma=2  \frac{3}{5} \frac{3 + \log n}{n \sqrt{n}}$ \footnote{give better results than the default one.}\\
CSA-ES & default step-size mechanism of CMA-ES (Eq.~\eqref{eq:path}, \eqref{app:eq:commamean}, \eqref{app:eq:ssDSA} with the term $\sqrt{\mueff }  \| \sum_{i=1}^{p} w_{i} \Ytt^{i} \| $ replaced by $\| \p_{t+1} \|$). $p=\lfloor4 + 3 \log(n) \rfloor$; $w_i =  \left(\log((p+1)/2) - \log(i) \right) \vee 0$;  $c = \sqrt{\mueff + 2}/(\sqrt{n}+ \sqrt{\mueff+3}) $, $\LRm = 1$, $\LRsigma=1$, $\p_0=\mathbf{0}$\\
CSA-ES1 &  same as CSA-ES except $c=1$\\
  CMA-ES &  Python code version 1.1.06 available \\
  &  \url{https://pypi.python.org/pypi/cma/1.1.06} \\
  RP & Matlab implementation, use of  {\tt fminunc} for the line search with the option set to {\tt optimset('Display', 'off', 'LargeScale', 'off', 'TolX', 10\^\,\!(-10), 'TolFun', 10\^\,\!(-12))} (implementation of \cite{stich:2013} with different stopping criterion for the line search) \\
  Nelder-Mead & function \texttt{scipy.optimize.fmin} from the scipy Python library (version 0.16.0)\anne{import scipy ; 
scipy.\_\_version\_\_ }
\end{tabular}
\caption{\label{param-expe} List of algorithms and their parameters used for the experiments.}
\end{center}
\end{figure}

\newcommand{\fsphere}{\ensuremath{f_{\rm sphere}}}
\newcommand{\fpnorm}{\ensuremath{f_{\rm pnorm}}}
We experimented the algorithms on the four scaling-invariant functions presented in Table~\ref{def-function}. The convex quadratic functions $f_{\rm sphere}$, $f_{\rm elli}$ and the function $f_{\rm pnorm}$ for $p=2$ have convex sublevel sets in contrast to $f_{\rm pnorm} $ for $p=1/2$ (see Figure~\ref{fig:scalinginvariant}).
The function $f_{\rm elli}$ is ill-conditioned (the condition number of its Hessian matrix is $10^6$). The functions $f_{\rm sphere}$ and  $f_{\rm pnorm} $ for $p=2$ have the same level sets.
\begin{figure}
\begin{center}
\begin{tabular}{ll}
$f_{\rm sphere}(\x) = \sum_{i=1}^n \x_i^2$ &     $f_{\rm elli}(\x) = \sum_{i=1}^n (10^6)^{\frac{i-1}{n-1}} \x_i^2$ \\ $\fpnorm(\x)= ( \sum_{i=1}^n |\x_i|^p)^{1/p} $ & for $p=2$ and $p=0.5$ \\
\end{tabular}
\caption{\label{def-function} Definition of the functions used within the experiments.}
\end{center}
\end{figure}
Each algorithm has been tested on the four test functions using as starting point the vector $(1,\ldots,1)$ except Nelder-Mead---the only deterministic algorithm---using a random starting point sampled according to $(1,\ldots,1) + 0.1\ \Normal$. The initial step-size for the step-size adaptive algorithms and CMA-ES has been set to $1$.

\begin{figure}
\centering
\includegraphics[width=0.24\textwidth]{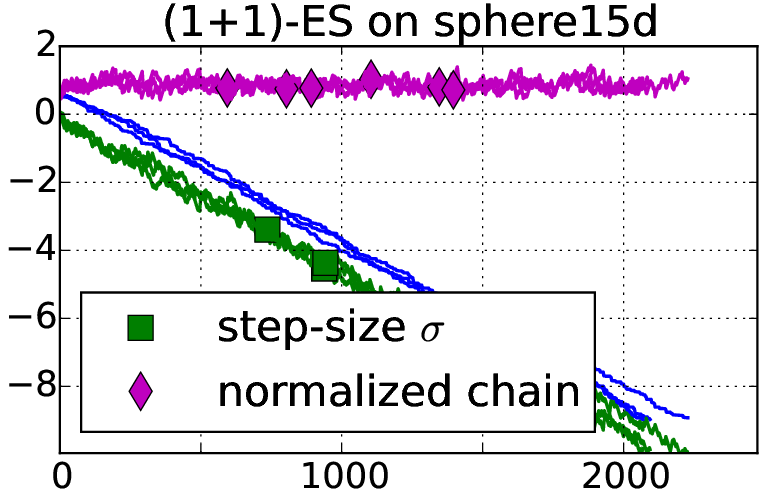}
\includegraphics[width=0.24\textwidth]{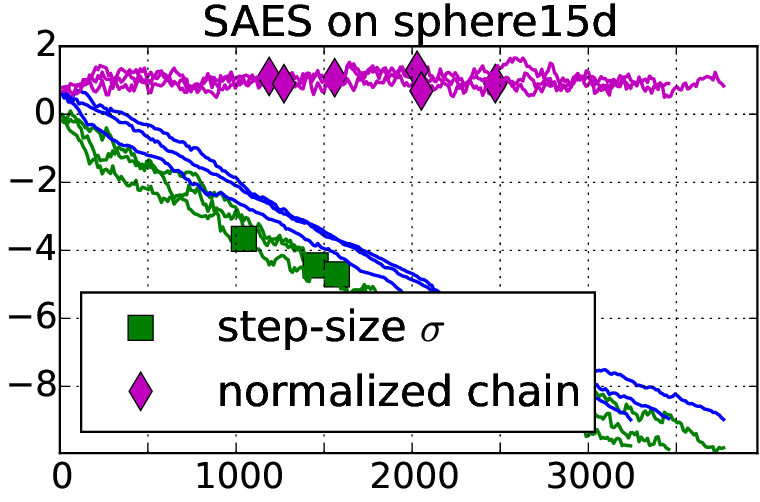}
\includegraphics[width=0.24\textwidth]{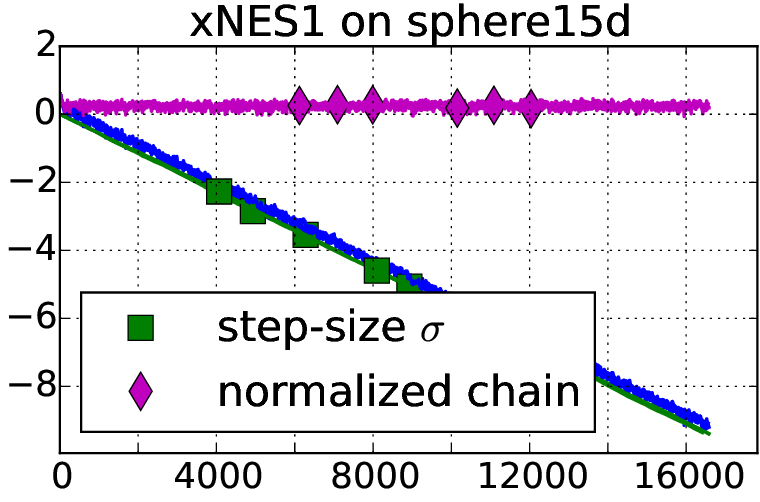}
\includegraphics[width=0.24\textwidth]{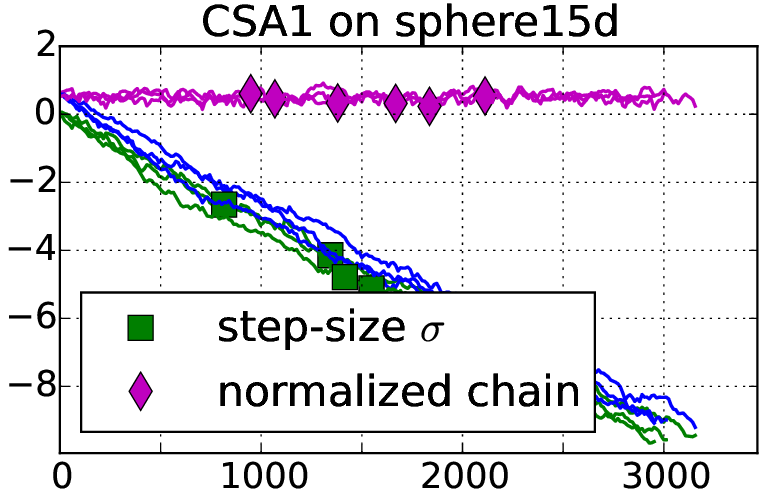}
\includegraphics[width=0.24\textwidth]{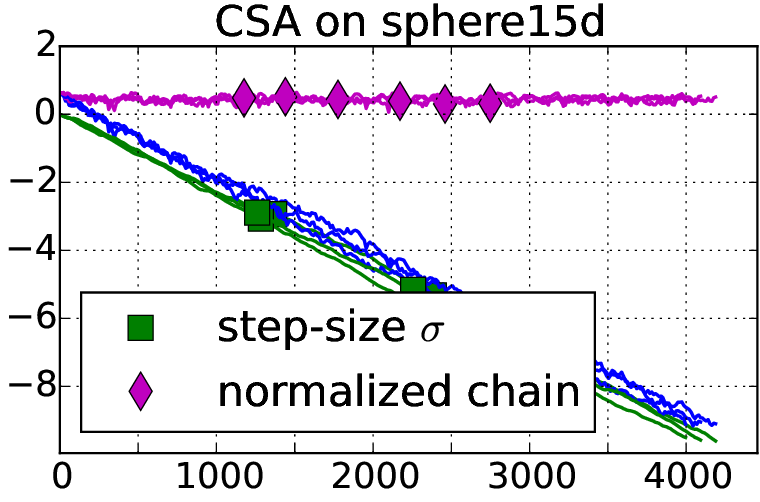}
\includegraphics[width=0.24\textwidth]{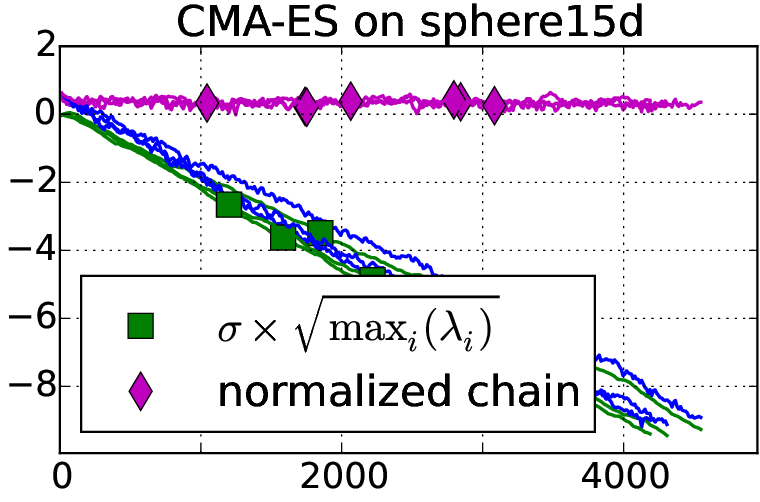}
\includegraphics[width=0.24\textwidth]{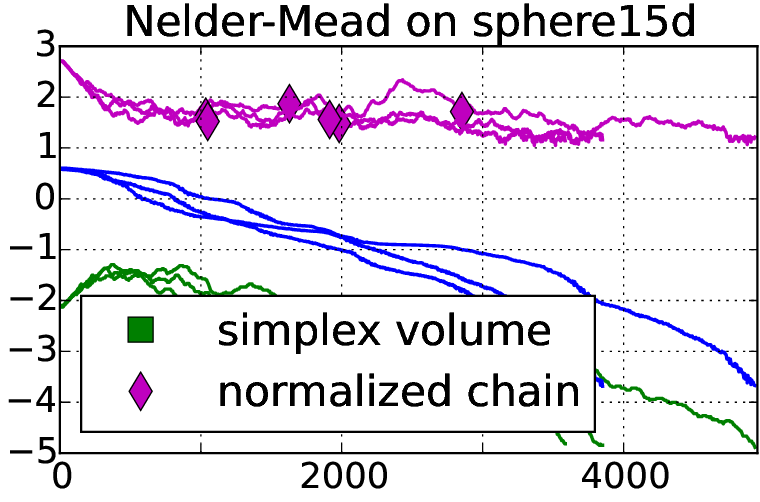}
\includegraphics[width=0.24\textwidth]{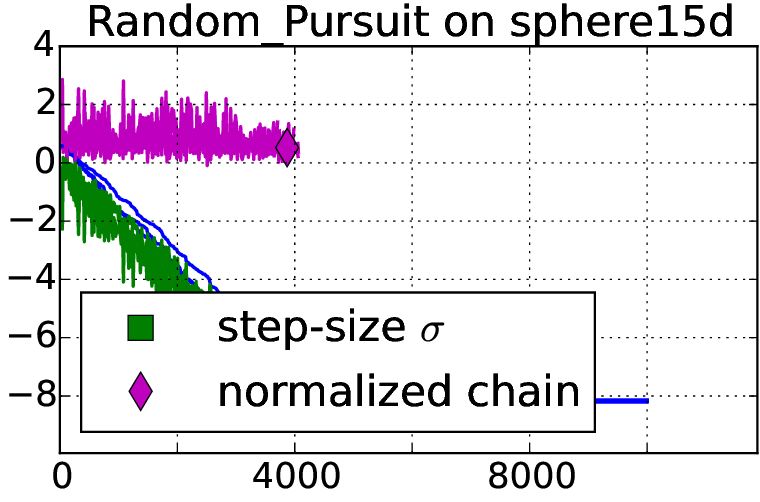}
\includegraphics[width=0.24\textwidth]{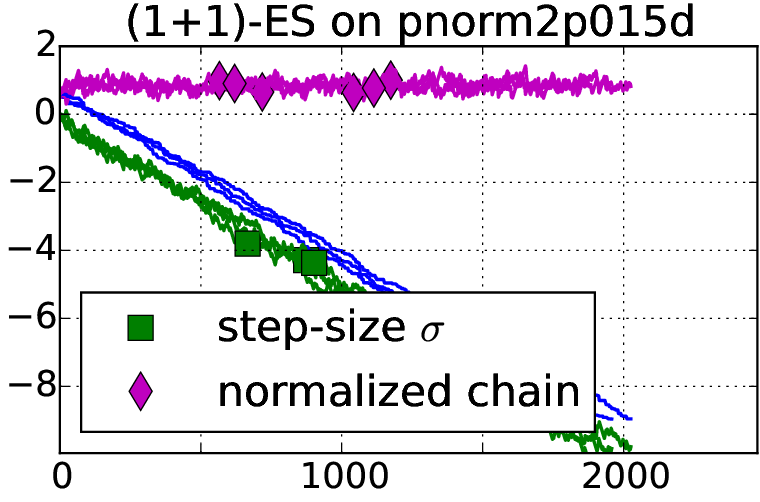}
\includegraphics[width=0.24\textwidth]{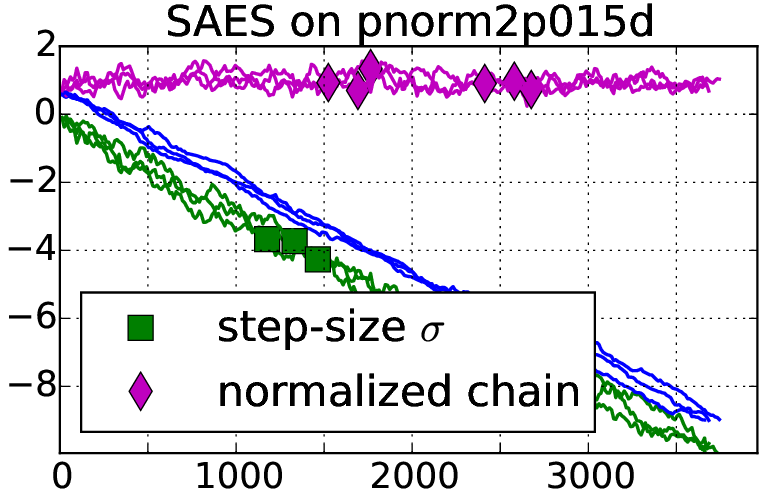}
\includegraphics[width=0.24\textwidth]{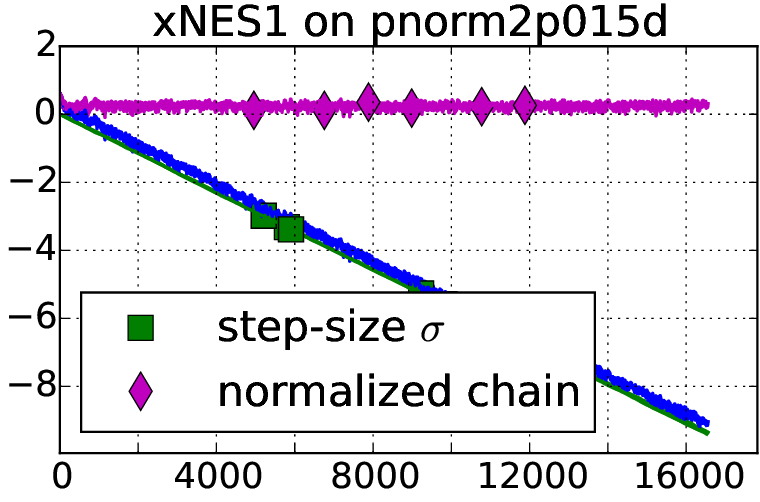}
\includegraphics[width=0.24\textwidth]{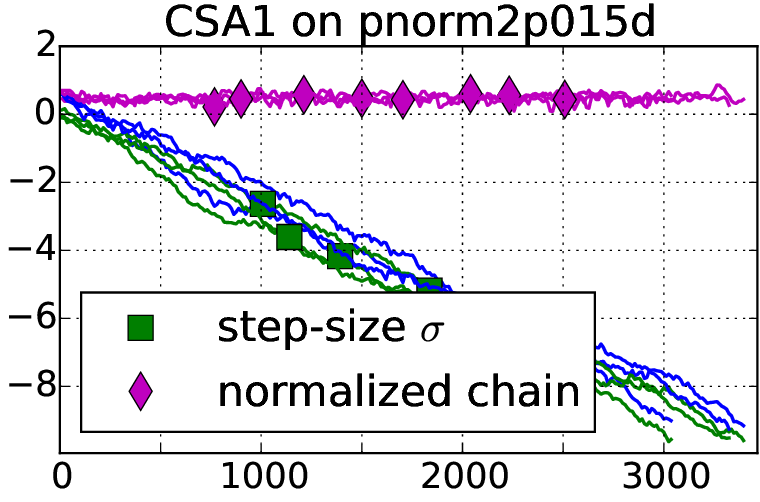}
\includegraphics[width=0.24\textwidth]{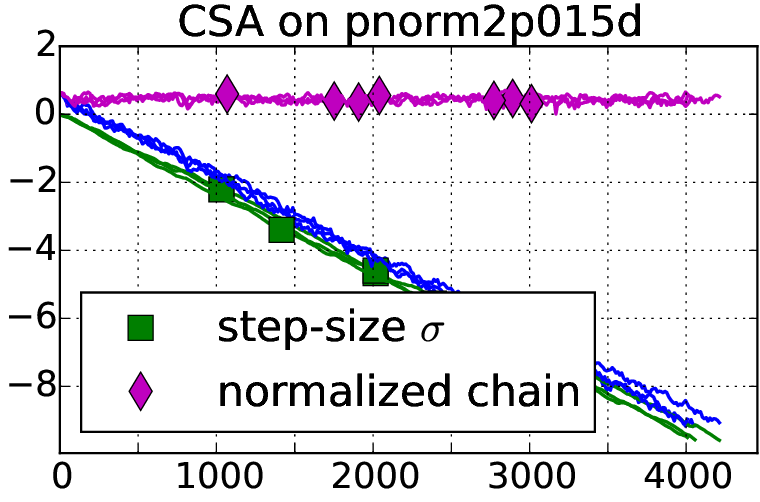}
\includegraphics[width=0.24\textwidth]{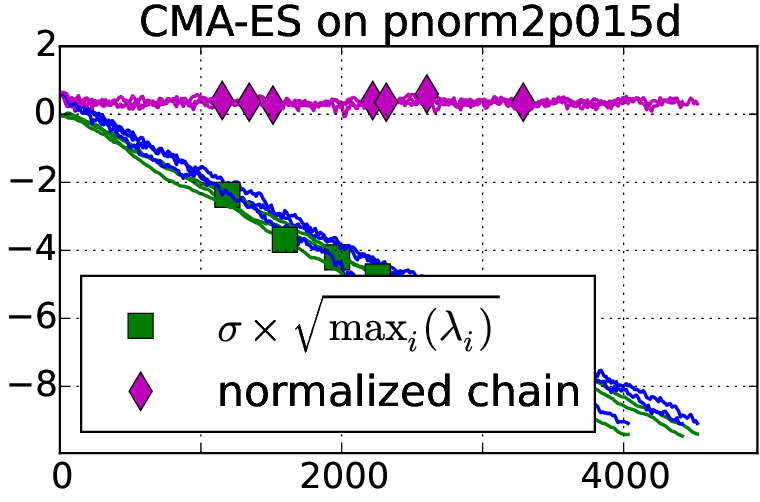}
\includegraphics[width=0.24\textwidth]{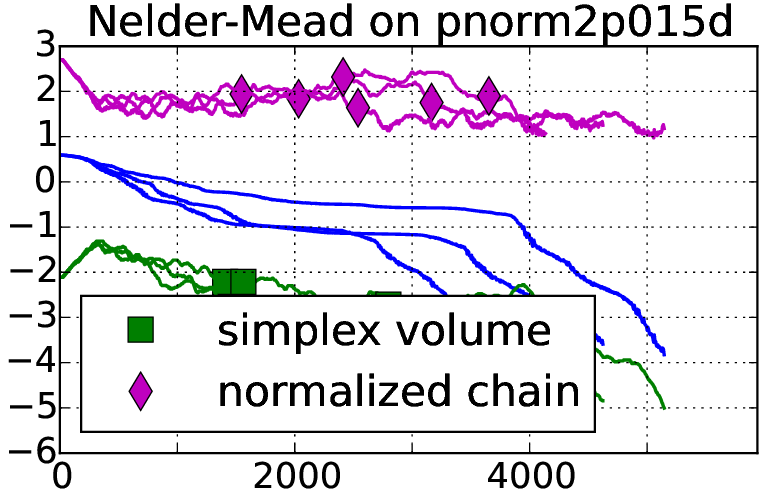}
\includegraphics[width=0.24\textwidth]{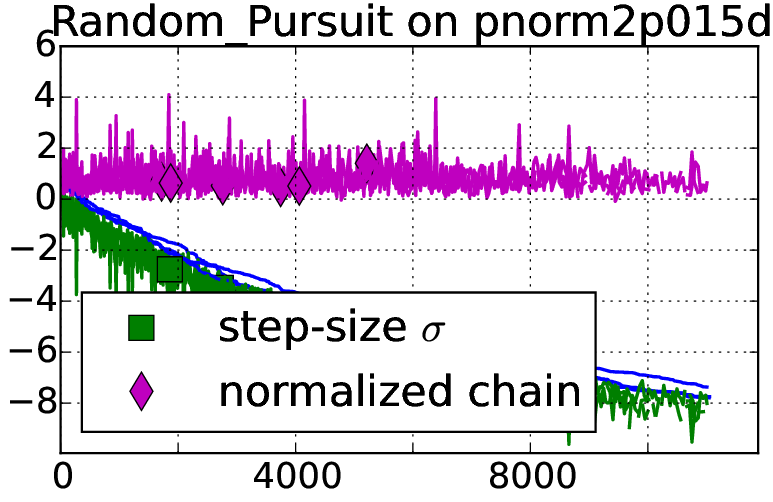}
\includegraphics[width=0.24\textwidth]{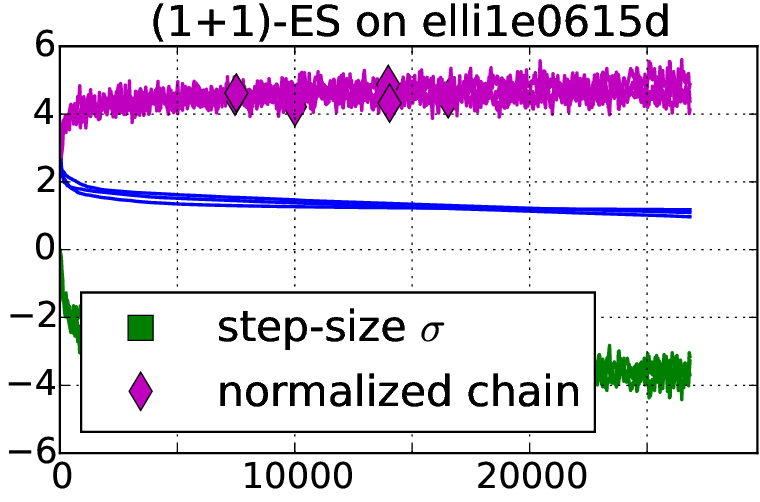}
\includegraphics[width=0.24\textwidth]{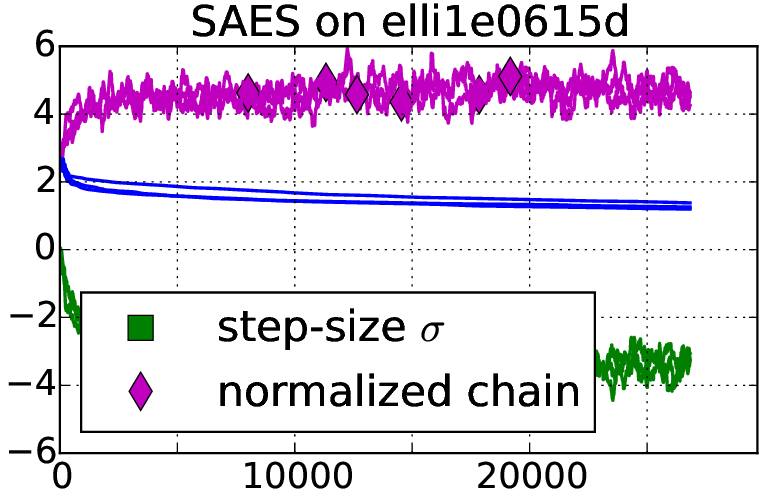}
\includegraphics[width=0.24\textwidth]{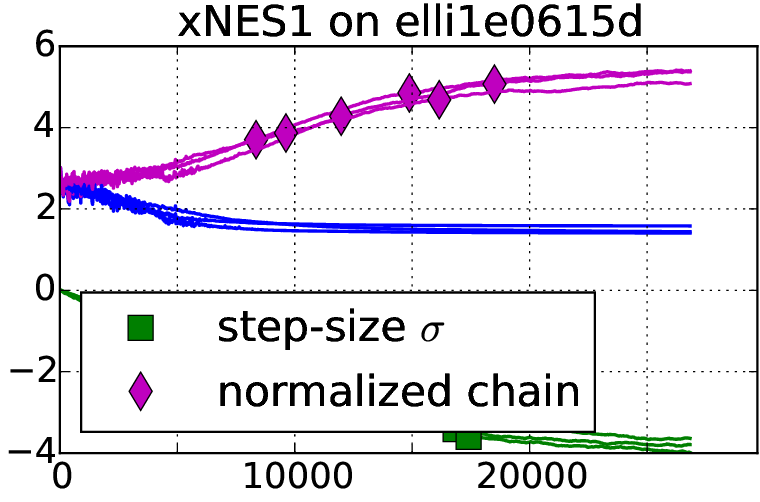}
\includegraphics[width=0.24\textwidth]{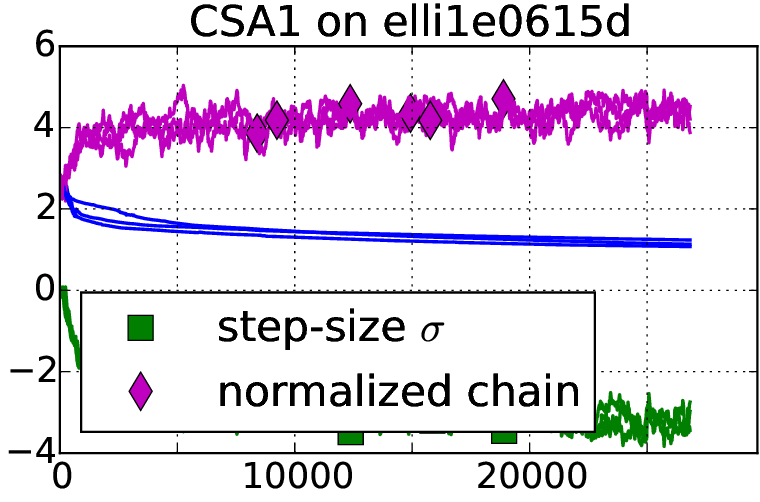}
\includegraphics[width=0.24\textwidth]{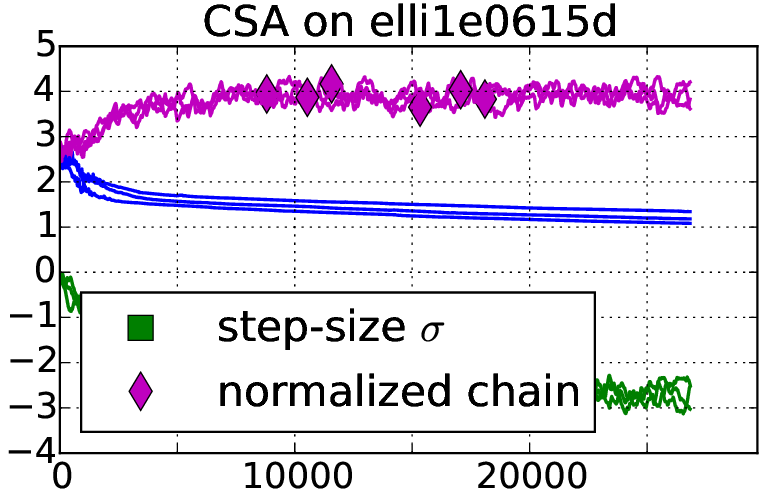}
\includegraphics[width=0.24\textwidth]{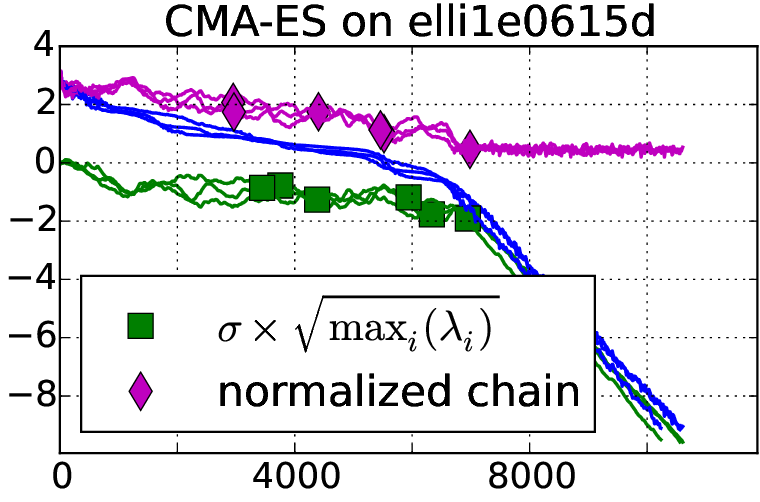}
\includegraphics[width=0.24\textwidth]{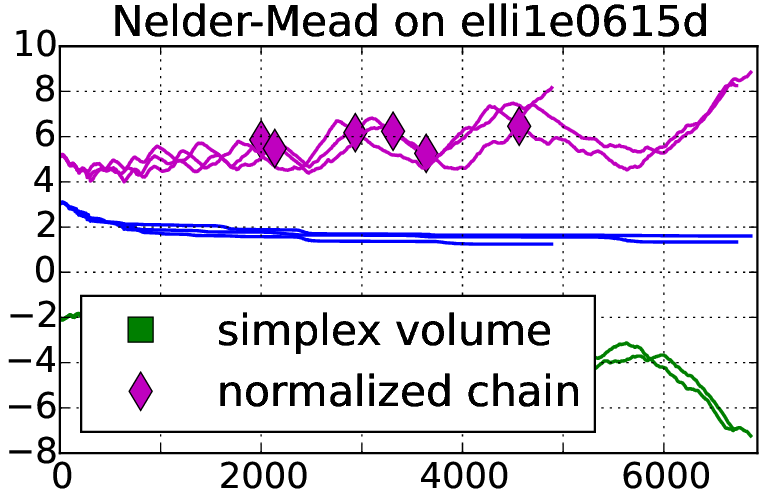}
\includegraphics[width=0.24\textwidth]{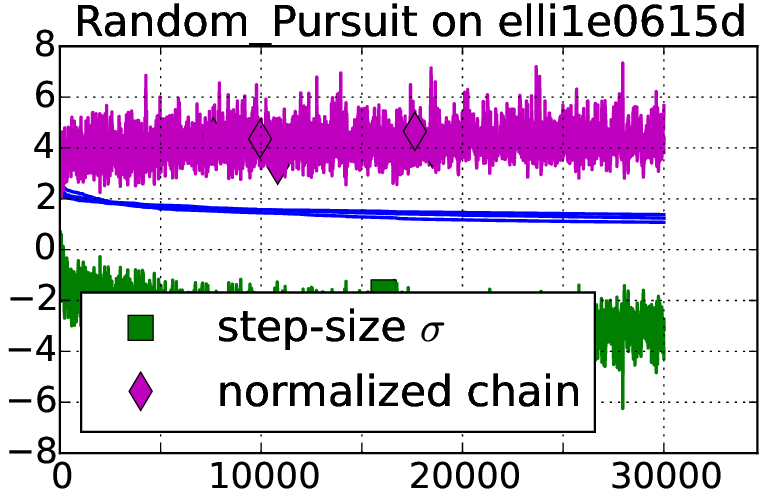}
\includegraphics[width=0.24\textwidth]{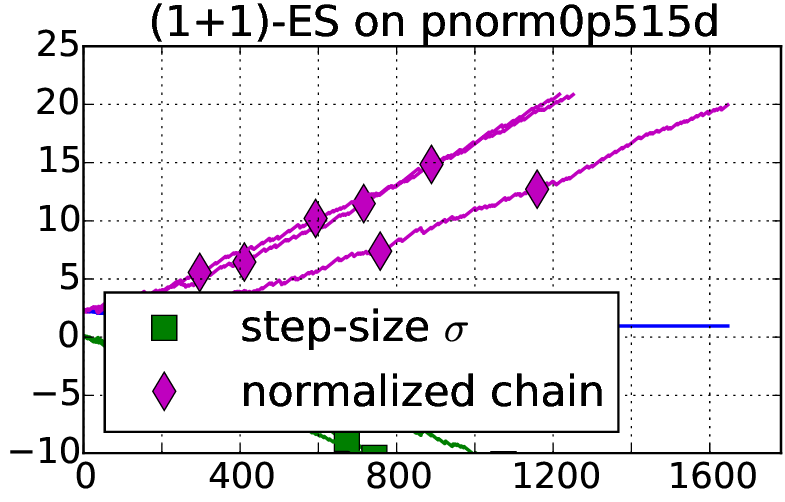}
\includegraphics[width=0.24\textwidth]{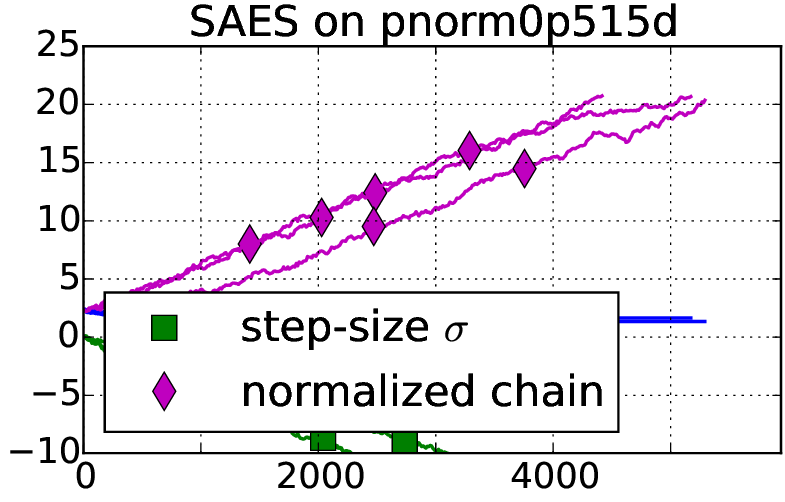}
\includegraphics[width=0.24\textwidth]{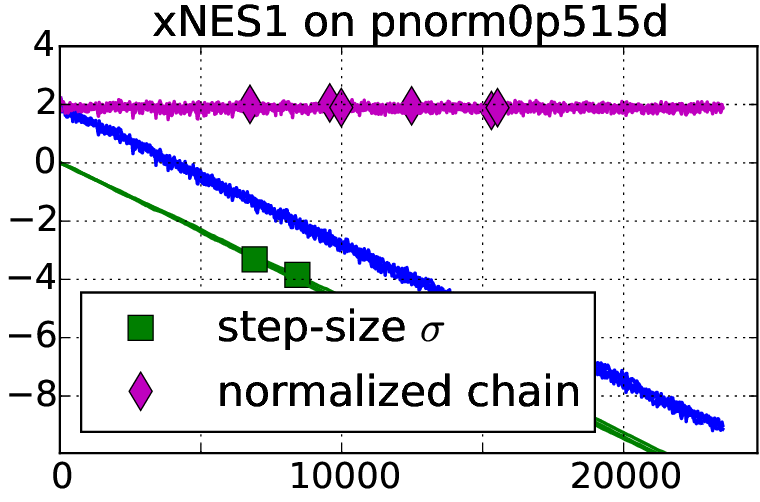}
\includegraphics[width=0.24\textwidth]{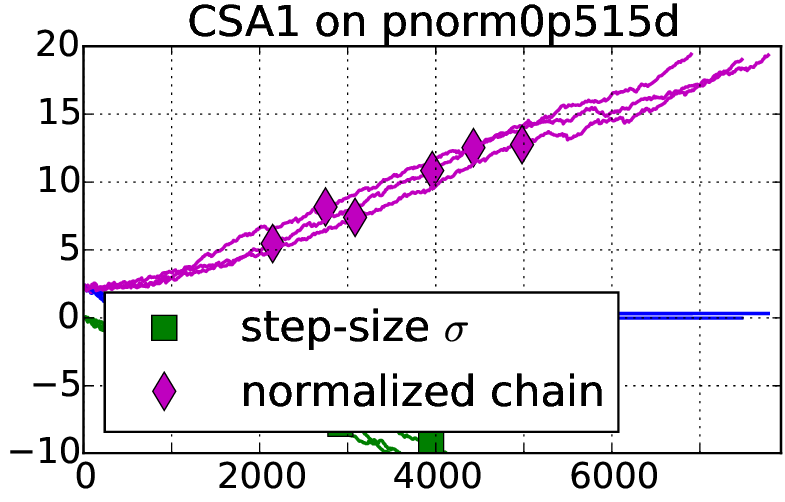}
\includegraphics[width=0.24\textwidth]{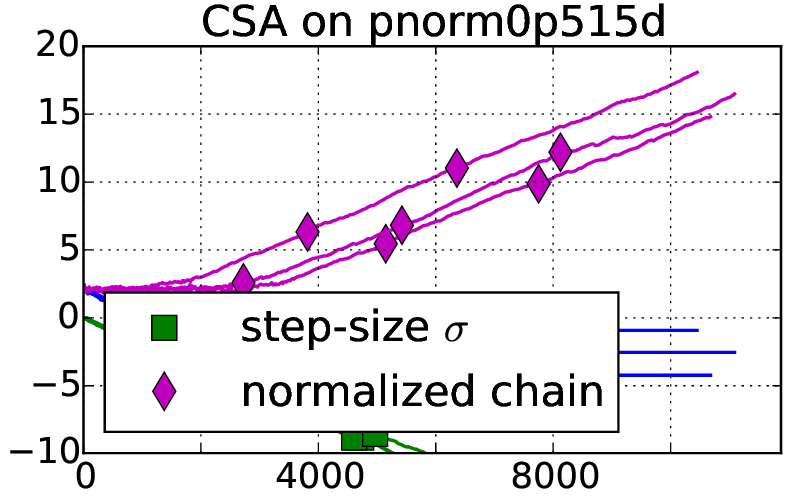}
\includegraphics[width=0.24\textwidth]{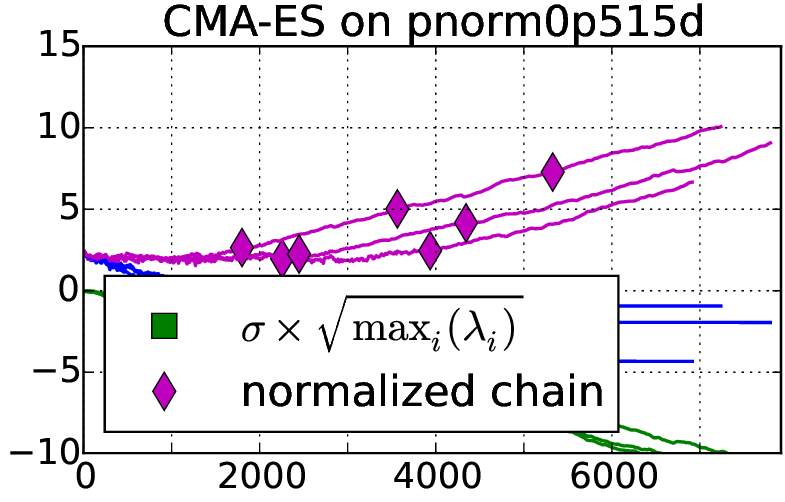}
\includegraphics[width=0.24\textwidth]{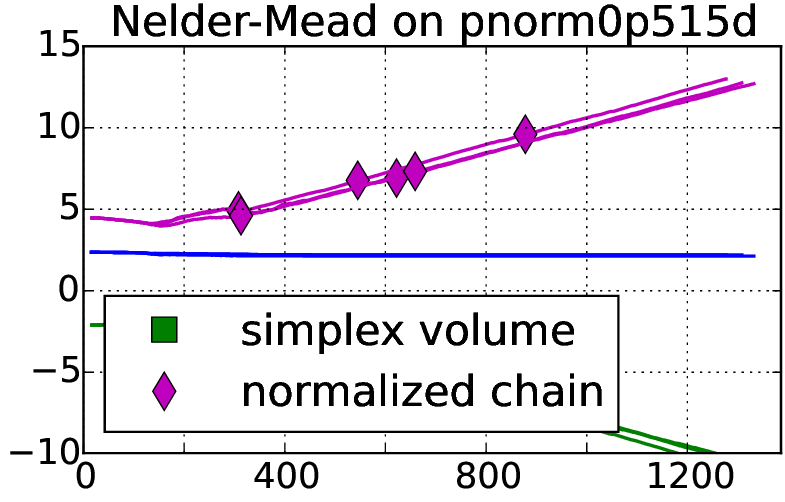}
\includegraphics[width=0.24\textwidth]{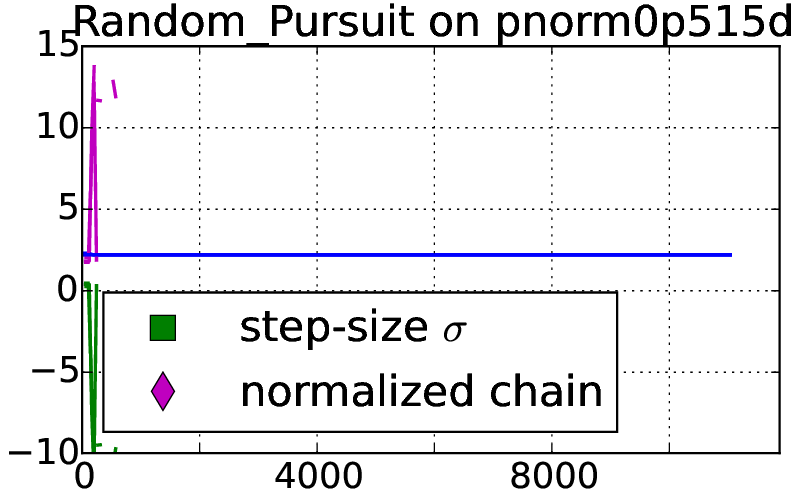}
\caption{\label{fig:single-runs} Single-runs in $15 D$. On the $x$-axis the number of function evaluations is displayed. A log scale is used for the $y$-axis whose numbers have to be read as $10^y$ where $y$ is the number \new{displayed}\del{to be read of the $y$-axis}. Curves with a square marker display the evolution of the step-size for all algorithms but CMA-ES where it is the evolution of the step-size time the square root of the maximal eigenvalue of the covariance matrix and the Nelder-Mead algorithm where the simplex volume is displayed. Curves without a marker display either the square root of the objective function for $f_{\rm sphere}$, $f_{\rm elli}$ or the objective function. Curves with a diamond display the square root of the objective function (resp.\ objective function) of the normalized chain for $f_{\rm sphere}$ and $f_{\rm elli}$ (resp.\ the p-norm functions).}
\end{figure}
We present in Figure~\ref{fig:single-runs} three independent runs of each algorithm on the functions presented in Table~\ref{def-function} for dimension $15$. The number of function evaluations is displayed on the x-axis. The lines without markers display, for $f_{\rm sphere}$ and $f_{\rm elli}$, the evolution of the square root of the objective function value of the incumbent $\Xt$ and, for the $f_{\rm pnorm}$ functions, the objective function value. The lines with a square marker display for $(1+1)_{\onefifth}$-ES, xNES1, CSA-ES, CSA-ES1, RP the step-size, for CMA-ES the step-size times the square root of the maximum eigenvalue of the covariance matrix and for Nelder-Mead the simplex volume. Lines with a diamond marker display the former line divided by the latter corresponding to $f(\Xt/\st)^{1/2}$ for  $f_{\rm sphere}$ and $f_{\rm elli}$ and $f(\Xt/\st)$ for the p-norm functions.
\niko{to self: xNES on the p1/2-norm is only "stable", because the initial step-size is large enough. }\del{
We observe }
\begin{itemize}
\item On the sphere function \new{we observe} fast linear convergence and a stable normalized Markov chain for all stochastic algorithms. 
\item \new{We observe the same behavior as on the sphere}\del{The same picture} for all stochastic algorithms except for RP on $f_{\rm pnorm}$ for $p=2$, because \fpnorm\ has the same level sets as \fsphere\ and the algorithms are ranked-based. RP is slower on the pnorm for $p=2$ than on the sphere; it needs roughly two times more function evaluations to reach a f-value of $10^{-8}$.
\item On $f_{\rm elli}$ \new{we observe} much slower convergence for all step-size adaptive algorithms and Nelder-Mead. The CMA-ES algorithm is the fastest, and we observe two stages: until $6500$ function evaluations, linear convergence but relatively slow compare to the second stage where the same convergence speed as on the sphere is observed. Nelder-Mead does not exhibit stable convergent behavior. Together with the invariance properties of the algorithm, this result suggests that Nelder-Mead is not\del{ be} a ``stable" algorithm even on the sphere function.\niko{given the definition of stable demands "for all initial states". }
\item On $f_{\rm pnorm}$ for $p=1/2$, all but xNES exhibit an unstable normalized chain (diamond line) and premature convergence to a non-optimal point. The apparent success of xNES hinges on two settings: a large enough initial $\sigma_0$ and a small enough learning rate $\kappa_\sigma$. Accordingly, if $\kappa_\sigma$ is chosen small enough, also the CSA and CMA-variants exhibit similar behavior (not shown). However, none of the algorithms is stable independently of the initial values for $\x$ and $\sigma$. 
\end{itemize}
\anne{Kind of conclusion for this series of plots:}
\begin{itemize}
\item \del{On the different graphs we see an equivalence between stability of the normalized Markov chain and the linear convergence.}\niko{I would not make any such statement about equivalence. The truth is much more intricate (see xNES or Nelder-Mead) and without a wide exploration of parameter settings \emph{and} a deep understanding of the involved mechanisms, such set of simulation cannot give any more than some hints. } The simulations illustrate the relationship between stability of the normalized MC and linear convergence. However, without a wide exploration of parameter settings \emph{and} a deep understanding of the involved mechanisms, such set of simulations only provide some hints as to whether stability is achieved. Simulations can also be difficult to interpret correctly (see Nelder-Mead on the sphere that could be interpreted as stable whereas it is most likely not if we consider the simulations on the ellipsoid function). They hence cannot replace a theoretical proof.
\item For step-size adaptive randomized search algorithms and CMA-ES, stability is observed on three out of four scaling invariant functions (i.e.\ not for $f_{\rm pnorm}$ and $p=1/2$).
\new{Unsurprisingly}\del{Interestingly},\niko{to me it would be rather unsurprisingly than interestingly} this observation also corresponds to the sufficient condition for stability proven for the $(1+1)_{\onefifth}$-ES. Indeed we have  been able to prove the stability (and hence linear convergence of the algorithm) on a specific class of scaling-invariant functions, namely positively homogeneous functions that additionally satisfy some regularity assumptions like $f_{\rm sphere}$, $f_{\rm elli}$ and $f_{\rm pnorm}$ for $p=2$ but not $f_{\rm pnorm}$ for $p=1/2$ \cite{companion-oneplusone}. \new{Our}\del{Those} simulations backup the intuition that additional assumptions on the regularity of the level sets are needed to be able to prove the stability of the Markov chains.
\item In order to achieve reasonable convergence rates on the ill-conditioned $f_{\rm elli}$ function, we see that it is crucial to adapt the covariance matrix together with the step-size. The CMA-ES algorithm adapts the underlying metric by adapting the different parameters of the covariance matrix  such that in the end the ill-conditioned function is transformed into the sphere function.
\end{itemize}

\begin{figure}
\centering
\includegraphics[width=0.49\textwidth]{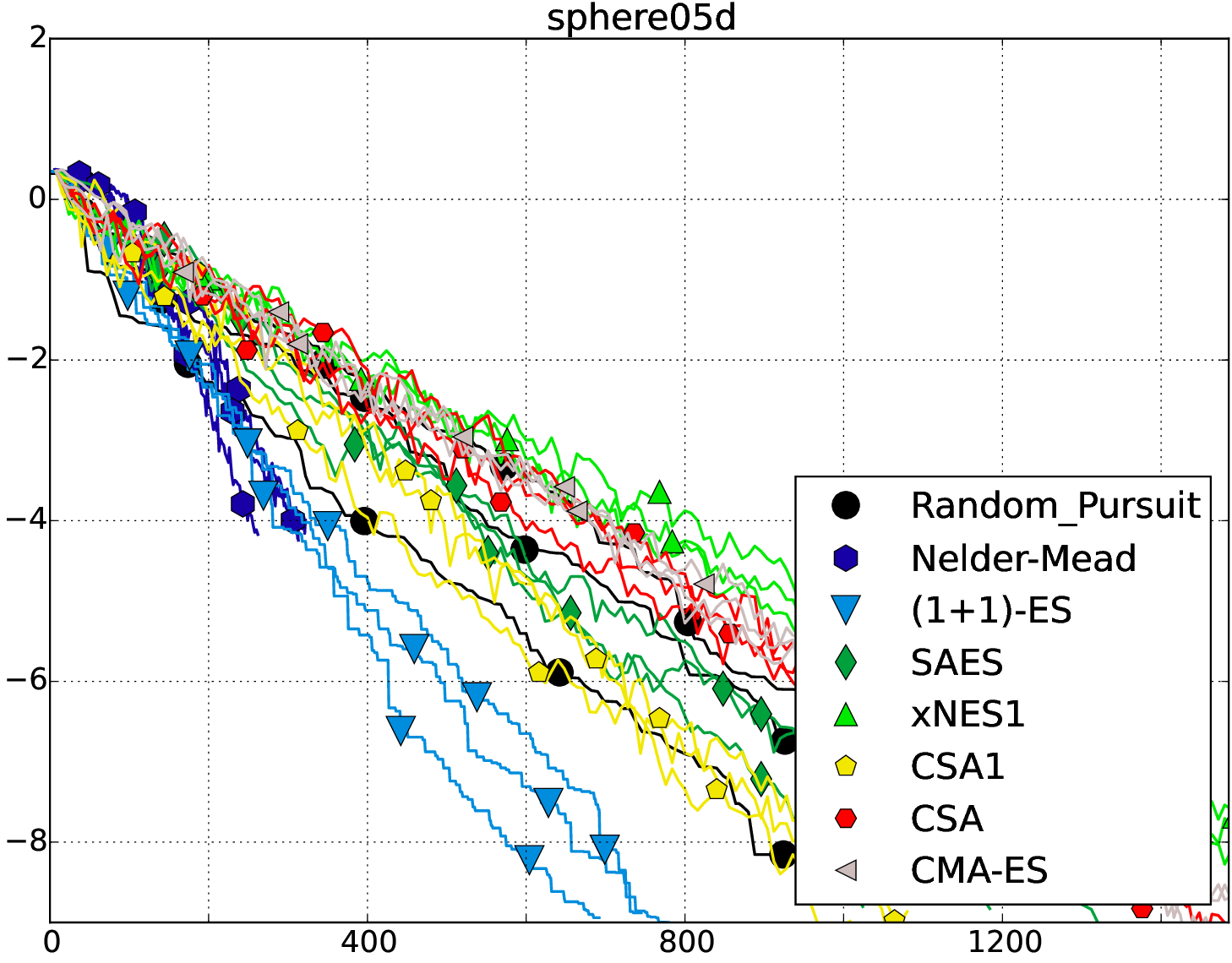}
\includegraphics[width=0.49\textwidth]{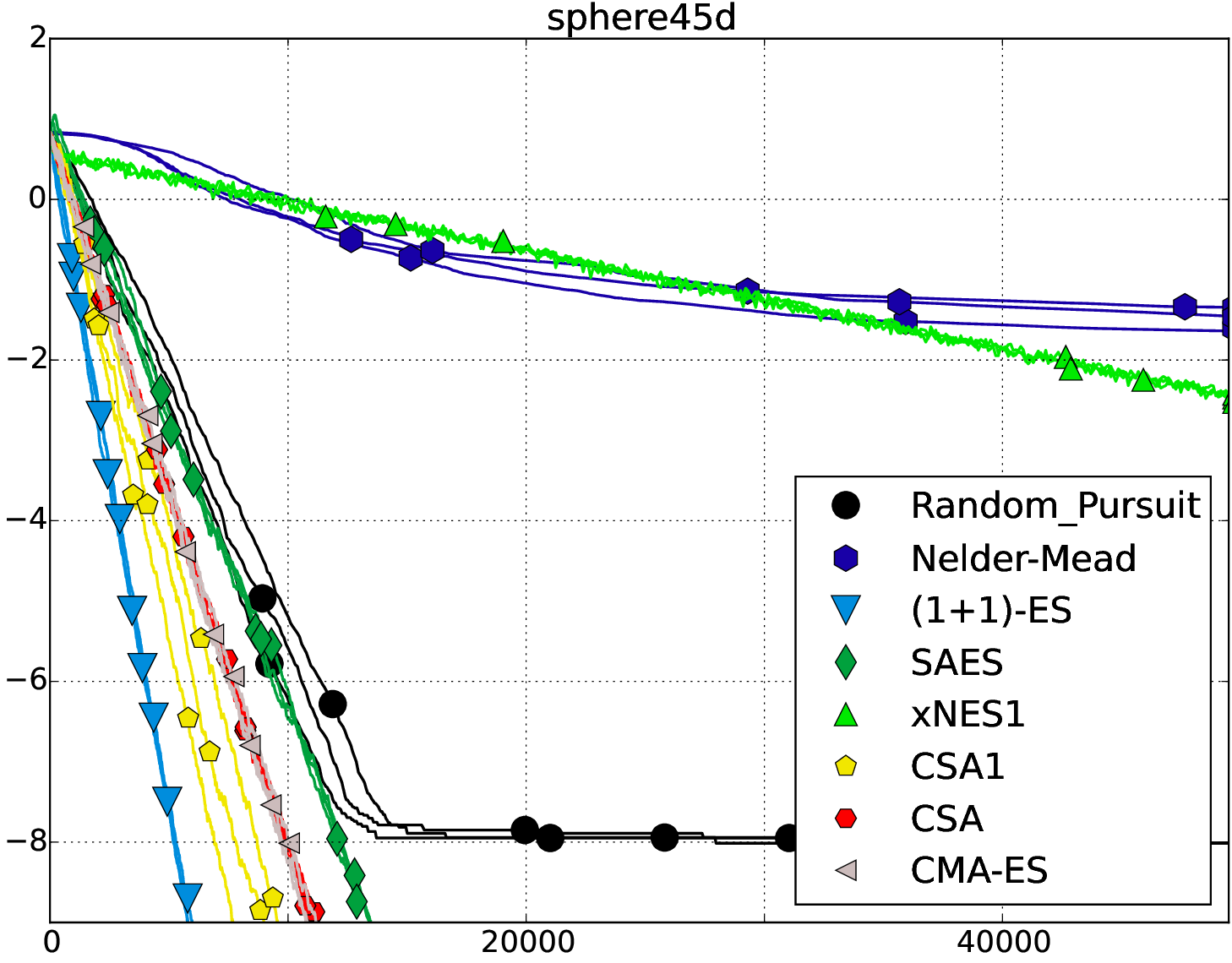}
\includegraphics[width=0.49\textwidth]{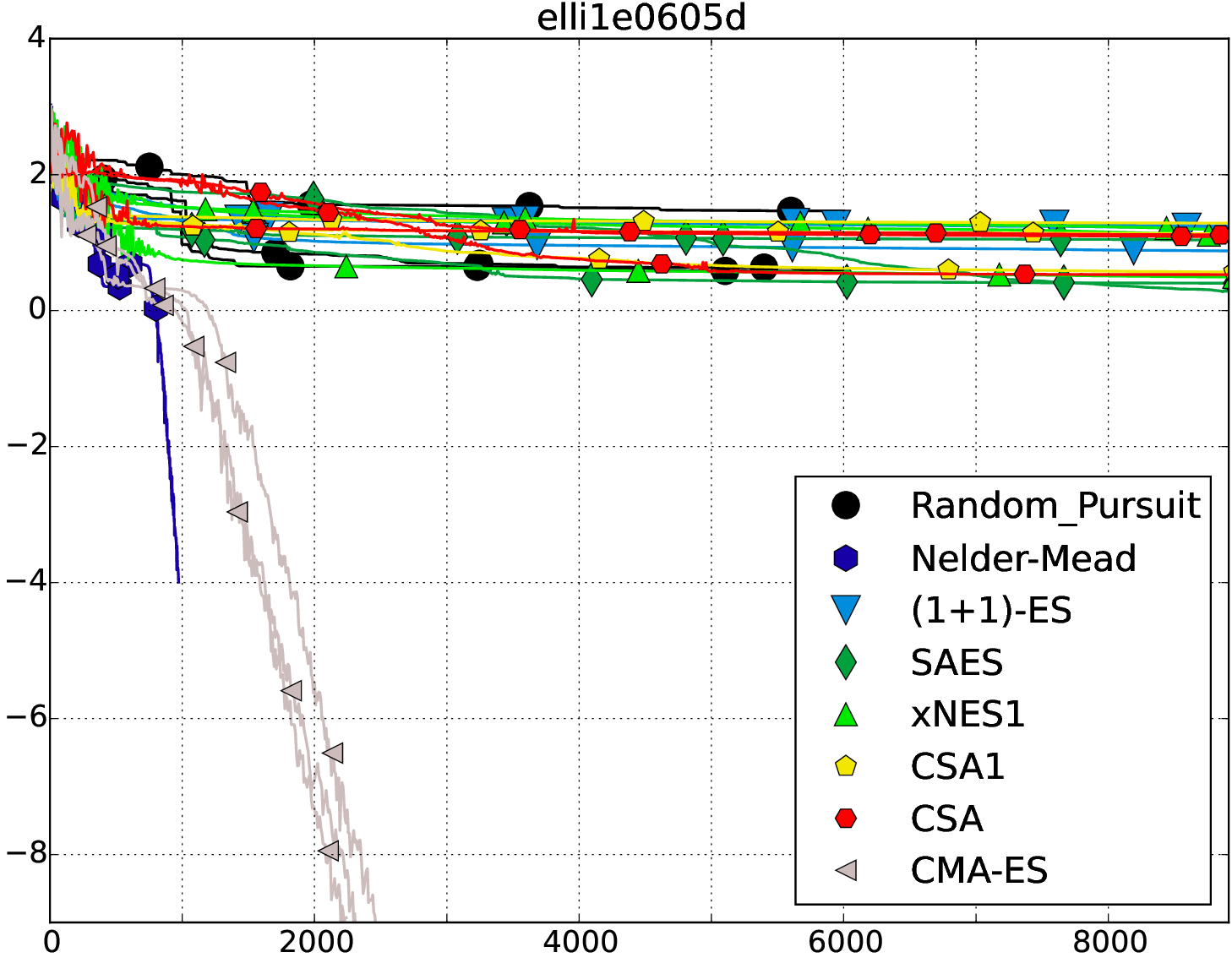}
\includegraphics[width=0.49\textwidth]{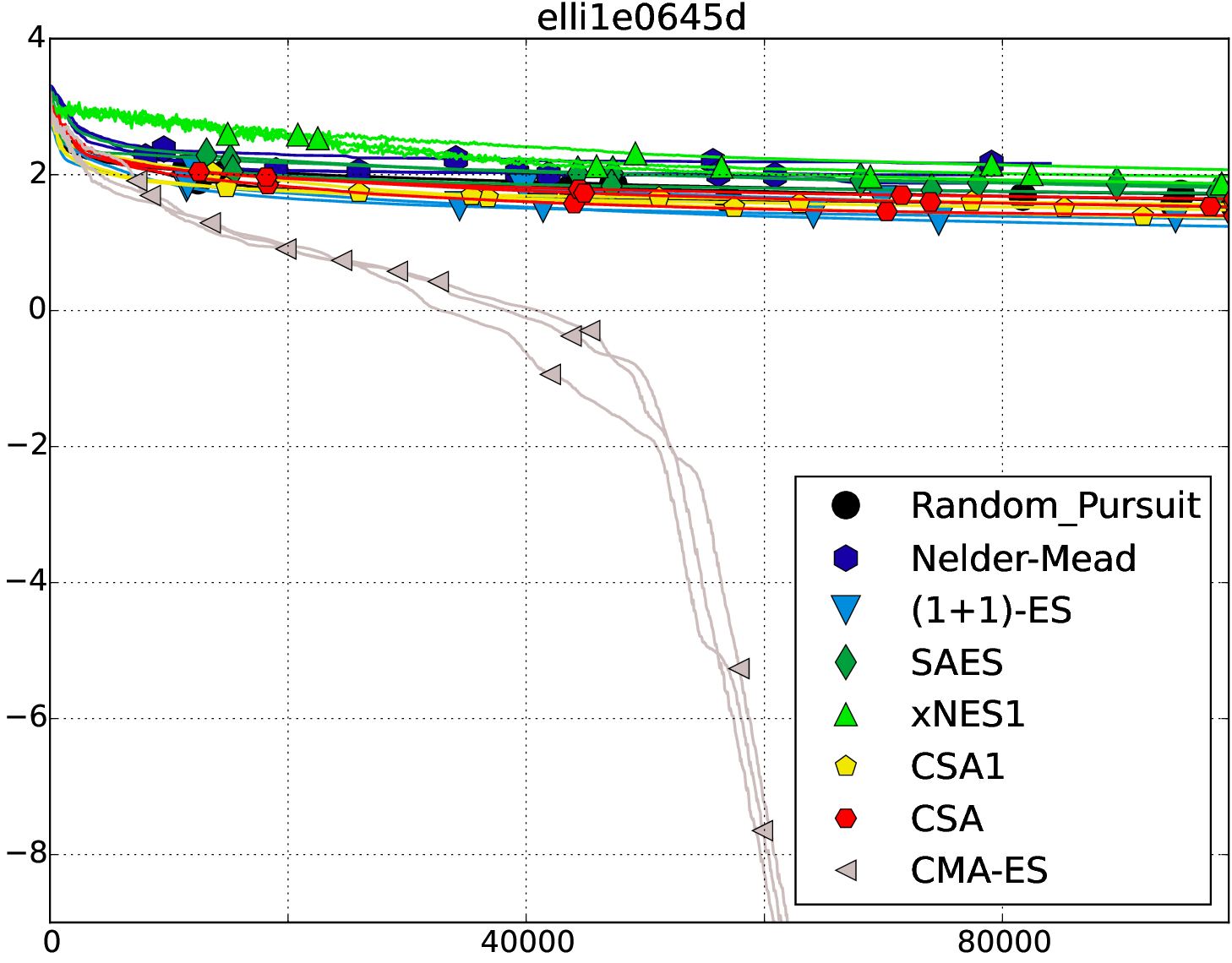}
\includegraphics[width=0.49\textwidth]{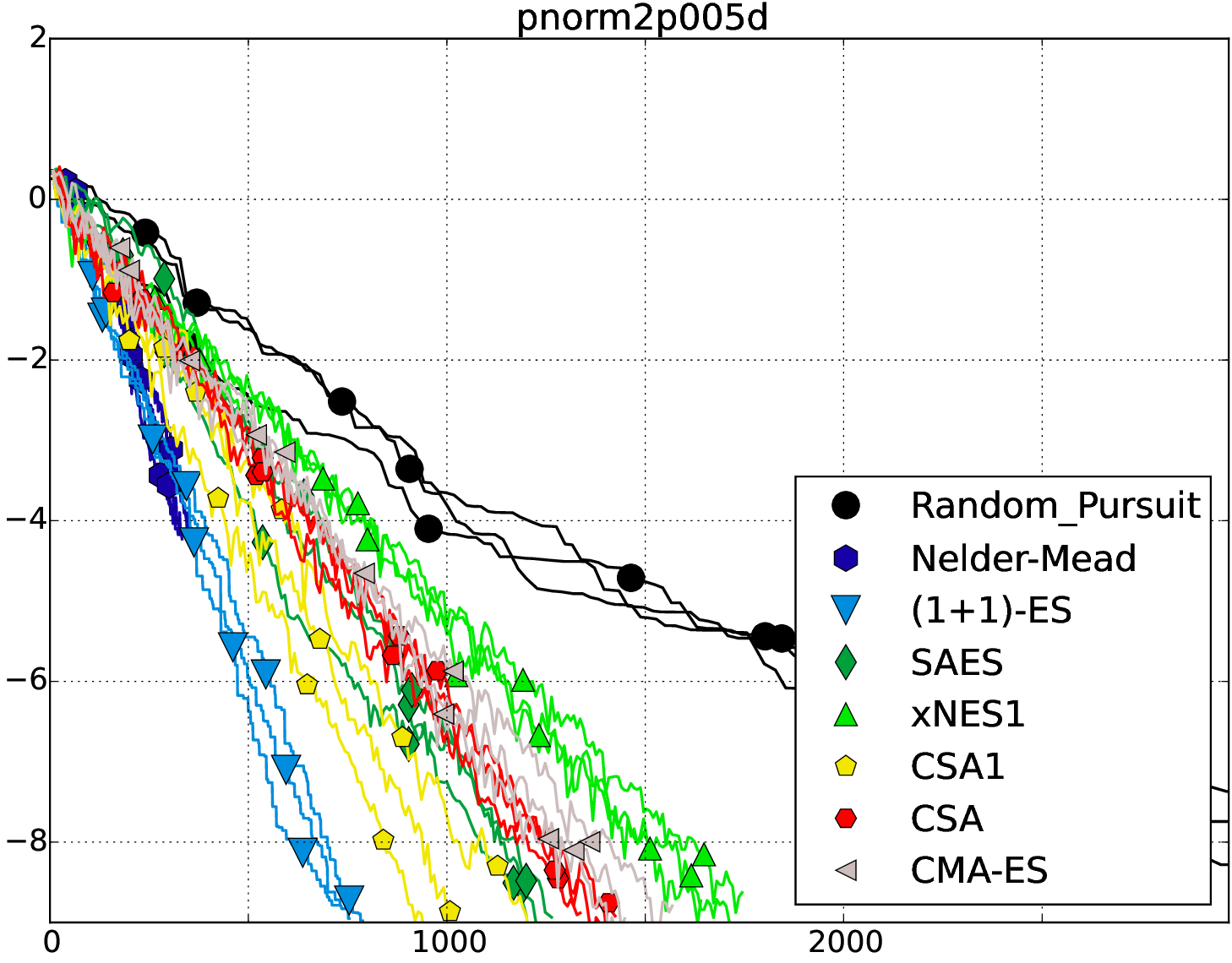}
\includegraphics[width=0.49\textwidth]{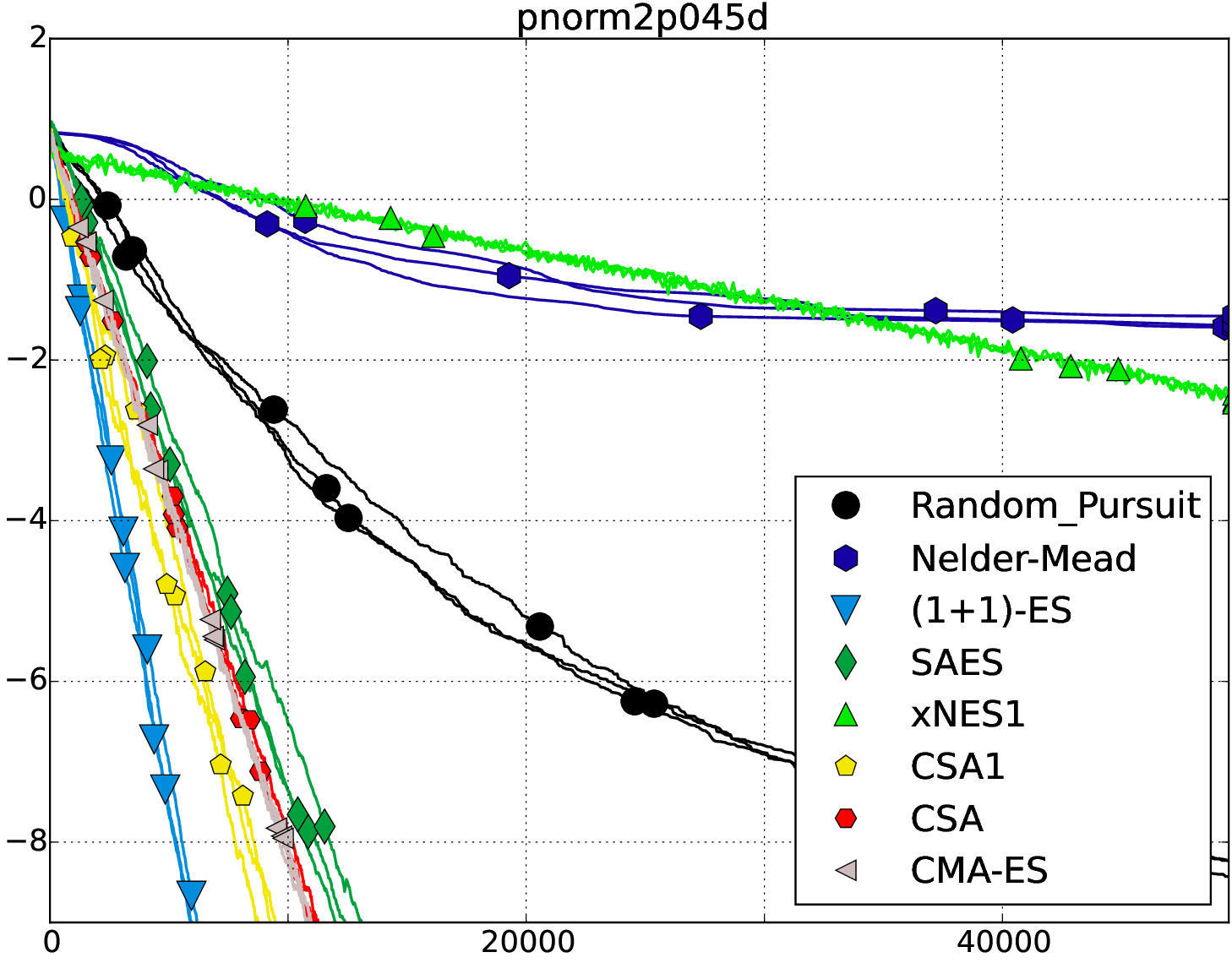}
\includegraphics[width=0.49\textwidth]{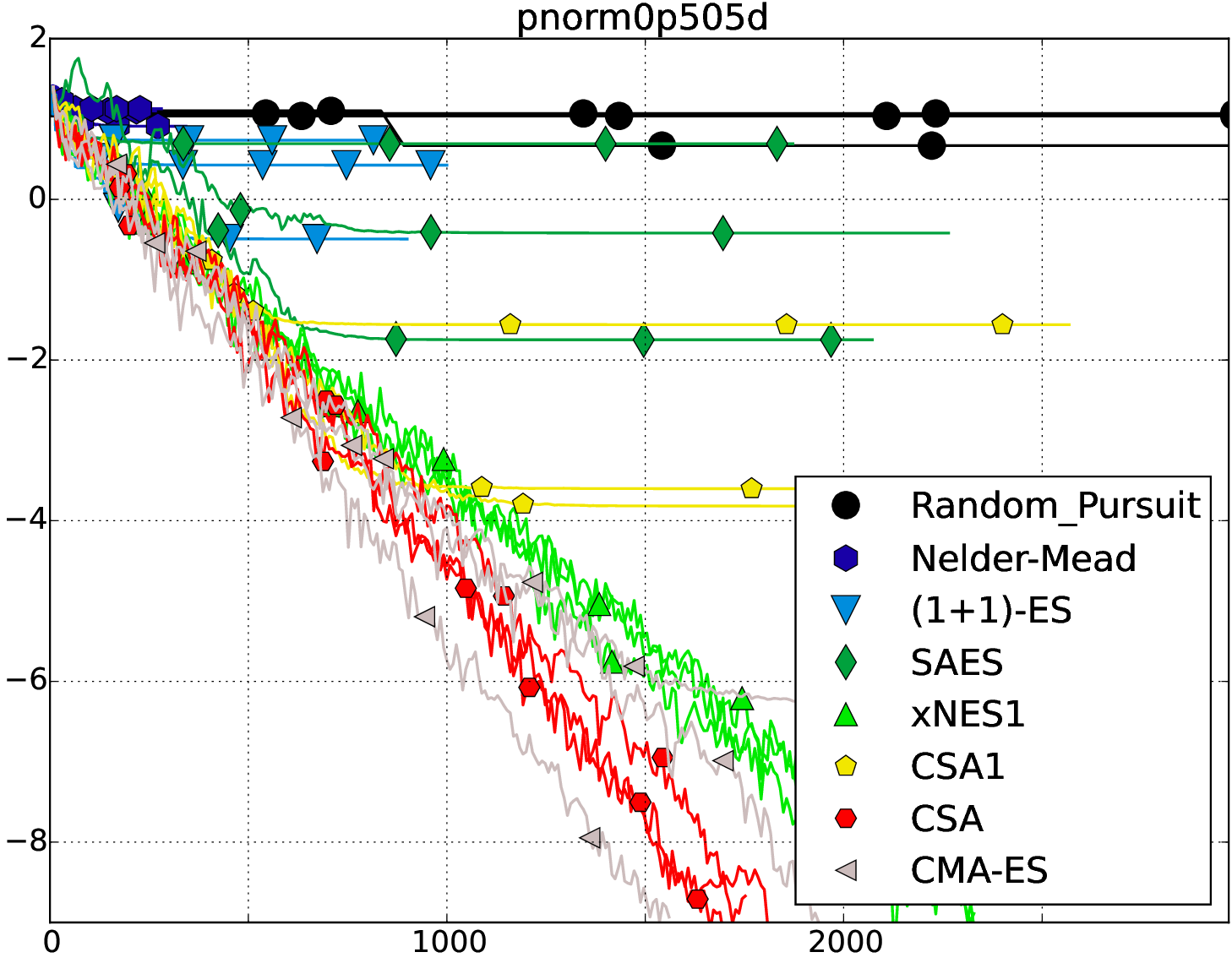}
\includegraphics[width=0.49\textwidth]{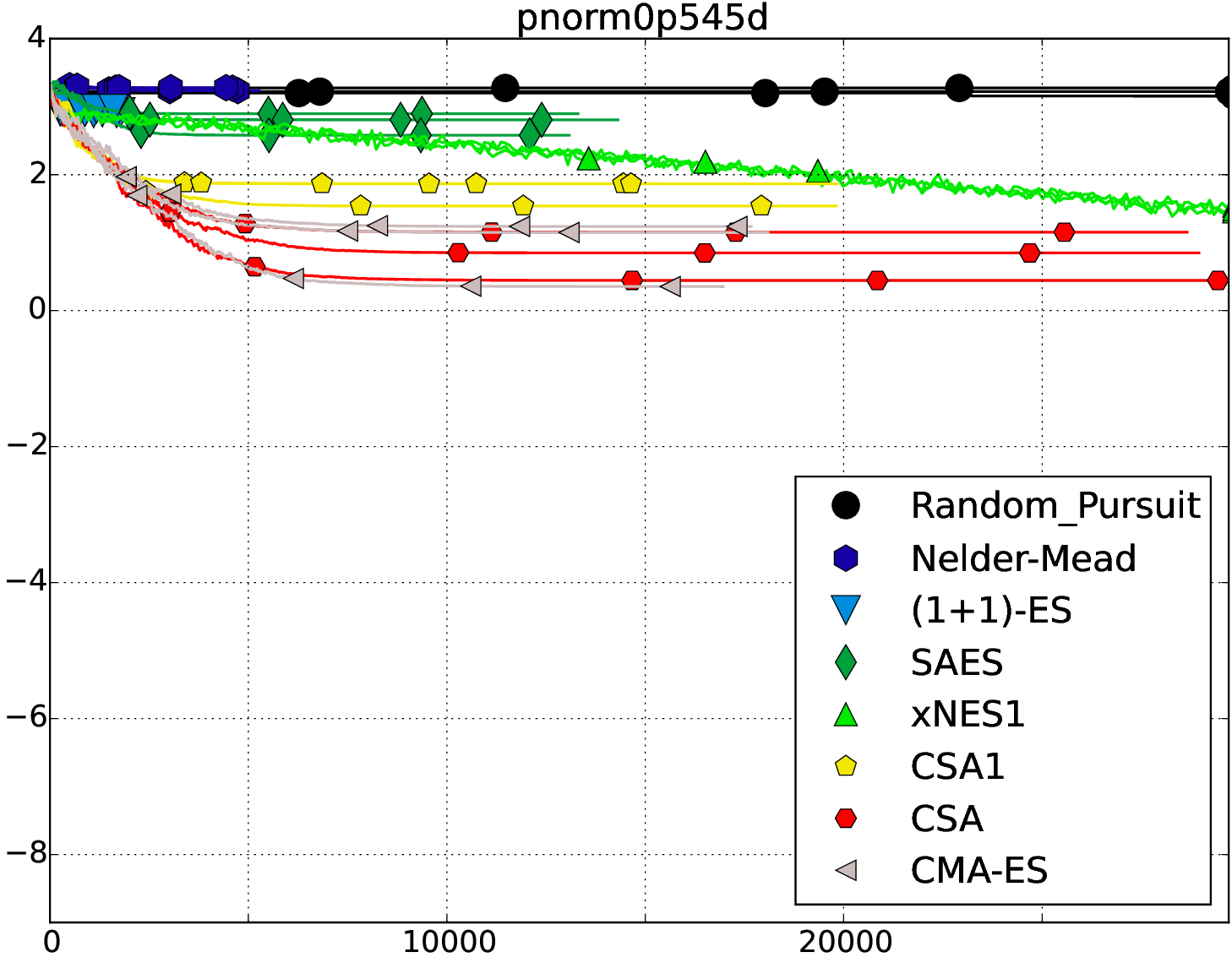}
\caption{\label{fig:all}
Single runs on $f_{\rm sphere}$, $f_{\rm elli}$, $f_{pnorm}$ for $p=2$ and $f_{\rm pnorm}$ for $p=1/2$ from top to bottom in dimension $5$ (left) and $45$ (right). The $x$-axis displays the number of function evaluations. A log scale is used for the $y$-axis \new{as in}\del{in a similar manner as for} Figure~\ref{fig:single-runs}. Subfigures in the upper two rows display the square root of the objective function, while subfigures in the lower rows
 display the objective function.
}
\end{figure}
In Figure~\ref{fig:all} we display in a single plot three runs of each algorithms on a given function in dimensions $5$ (left) and $45$ (right). The square root of the objective function is displayed for $f_{\rm sphere}$ and $f_{\rm elli}$ while the objective function is displayed for the $f_{\rm pnorm}$ functions.
\begin{itemize}
\item We observe a \new{strong}\del{large} impact of the dimension for the Nelder-Mead algorithm: while the algorithm is the fastest algorithm on the first three functions in dimension $5$, the algorithm does not work anymore in dimension $45$. This result is in agreement with previous observations that the Nelder-Mead algorithm does not work well for large dimension \cite{hansen2010comparing}.
\item The comparison of the graphs for $f_{\rm sphere}$ and $f_{\rm pnorm}$ \new{for $p=2$} illustrates again the invariance to monotonic transformation due to the rank-based property of all algorithms but RP.
\item On the $f_{\rm elli}$, we \new{see}\del{can envision} in direct comparison the large impact of having a covariance matrix adaptation mechanism compared to only step-size adaptation.
\item We observe that CMA-ES, CSA-ES and xNES1 are able to solve the $f_{\rm pnorm}$ function for $p=1/2$ in small dimension, however not necessarily in each single run. Here, the probability to succeed decisively depends on the chosen initial conditions. 
\end{itemize}

\end{appendix}

\bibliographystyle{plain}
\bibliography{optimbib}

\end{document}